\newtheorem{theorem}{Theorem}
\theoremstyle{plain}
\newtheorem{claim}[theorem]{Claim}
\newtheorem{conjecture}[theorem]{Conjecture}
\newtheorem{lemma}[theorem]{Lemma}
\newtheorem{problem}[theorem]{Problem}
\newtheorem{proposition}[theorem]{Proposition}
\newtheorem{thm}[theorem]{Theorem}
\numberwithin{equation}{section}
\numberwithin{theorem}{section}
\numberwithin{case}{section}
\numberwithin{subcase}{case}
\newcommand{\msc}[1]{\begin{center}MSC2000: #1.\end{center}}
\newcommand{\key}[1]{\begin{center}Keywords: #1.\end{center}}
\def\B{\mathcal{B}}
\def \a{\alpha}
\def\d{\delta}
\def \e{\varepsilon}
\def \eps{\e}
\def \r{\gamma}
\def\calP{\mathcal{P}}
\def\COMMENT#1{}
\let\COMMENT=\footnote
\begin{document}

\title{Exact minimum codegree threshold for $K^- _4$-factors}
\author{Jie Han, Allan Lo, Andrew Treglown and Yi Zhao}
\thanks{The first author is supported by FAPESP (Proc. 2014/18641-5). The third author is supported by EPSRC grant EP/M016641/1. The fourth author is partially supported by NSF grant DMS-1400073.}
\date{\today}

\begin{abstract}
Given hypergraphs $F$  and $H$, an $F$-factor in $H$ is a set of vertex-disjoint copies of $F$ which cover all the vertices in $H$. Let $K^- _4$ denote the $3$-uniform hypergraph with $4$ vertices and $3$ edges. We show that for sufficiently large $n\in 4 \mathbb N$, every $3$-uniform hypergraph $H$ on $n$ vertices with minimum codegree at least $n/2-1$ contains a $K^- _4$-factor. Our bound on the minimum codegree here is best-possible. It resolves a conjecture of Lo and Markstr\"om~\cite{LoMa} for large hypergraphs, who earlier proved an asymptotically exact version of this result.
Our proof makes use of the absorbing method as well as a result of Keevash and Mycroft~\cite{mycroft} concerning almost perfect matchings in hypergraphs.

\end{abstract}

\maketitle
 \msc{5C35, 5C65, 5C70}
\key{Tiling, Hypergraphs, Absorbing method}
\section{Introduction}\label{sec}
Given two hypergraphs $H$  and $F$, an \emph{$F$-tiling} in $H$ 
is a collection of vertex-disjoint copies of $F$ in $H$. An
$F$-tiling is called \emph{perfect} if it covers all the vertices of $H$.
Perfect $F$-tilings are also referred to as \emph{$F$-factors} or \emph{perfect $F$-packings}. 
Note that perfect $F$-tilings are generalisations of perfect matchings (which correspond to the case when $F$ is a single edge).

Tiling problems  have been widely studied for graphs. The seminal Hajnal--Szemer\'edi theorem~\cite{hs} states that every graph $G$ on $n \in r\mathbb N$ vertices and with $\delta (G) \geq (1-1/r)n$ contains a  $K_r$-factor.
More generally, given any graph $F$, K\"uhn and Osthus~\cite{kuhn2} determined, up to an additive constant, the minimum degree threshold that forces a  $F$-factor in a graph. 
See~\cite{survey} for a survey including many of the results on graph tiling.

Given a $k$-uniform hypergraph ($k$-graph for short) $H$  with a $d$-element vertex set $S$ (where $0 \leq d \leq k-1$) we define the \emph{degree} $\deg_H (S)$ of $S$ in $H$ to be the number of edges containing $S$. The \emph{minimum $d$-degree $\delta _{d} (H)$} of $H$ is the minimum of $\deg_H (S)$ over all $d$-element sets of vertices in $H$. 
We also refer to  $\delta _1 (H)$ as the \emph{minimum vertex degree} of $H$ and  $\delta _{k-1}(H)$ the \emph{minimum codegree} of $H$.

In recent years there have been significant efforts on finding minimum $d$-degree conditions that force a perfect matching in a $k$-graph. For example, for every $k \geq 3$, R\"odl, Ruci\'nski and Szemer\'edi~\cite{rrs} determined the minimum codegree threshold that forces a sufficiently large $k$-graph $H$ to contain a perfect matching. 
Other than the matching problems, only a few hypergraph tiling problems have been studied -- most of them were done recently.

Given a $k$-graph $F$ of order $f$ and an integer $n$ divisible by $f$, we define the threshold ${\delta_d(n,F)}$ 
 as the smallest integer $t$ such that every $n$-vertex $k$-graph $H$ with $\delta_{d}(H)\ge t$ contains an $F$-factor. We simply write $\d(n, F)$ for $\d_{k-1}(n, F)$.
One of the earliest results on hypergraph tiling was given by K\"{u}hn and Osthus \cite{KuOs-hc}, who proved that $\d(n, C_4^3)= n/4+ o(n)$, where $C_4^3$ is the (unique) 3-graph with four vertices and two edges.
Later Czygrinow, DeBiasio, and Nagle \cite{CDN} showed that for sufficiently large $n\in \mathbb{N}$, $\d(n, C_4^3)=n/4+1$ if $n\in 8\mathbb{N}$ and $\d(n, C^3_4)= n/4$ otherwise.
Let $K_4^3$ denote the complete 3-graph on four vertices. Lo and  Markstr\"om~\cite{LM1} showed that $\d(n, K^3_4)= 3n/4 + o(n)$. Independently and simultaneously Keevash and Mycroft \cite{mycroft} proved that for sufficiently large $n\in 4\mathbb{N}$, $\d(n, K^3_4)= 3n/4-2$ if $n\in 8\mathbb{N}$ and $\d(n, K^3_4)= 3n/4-1$ otherwise.
More recently Han and Zhao \cite{HZ3} and independently Czygrinow \cite{Czy} determined $\d_1(n, C_4^3)$ exactly for sufficiently large $n$. 
Mycroft \cite{Myc} determined $\d(n, F)$ asymptotically for many $k$-partite $k$-graphs $F$ (including complete $k$-partite $k$-graphs and loose cycles).
One of these thresholds, $\d(n, C_6^3)$, where $C_6^3$ denotes the 3-uniform loose cycle on 6 vertices, was determined exactly by Gao and Han \cite{GH_C6} very recently.
Han, Zang, and Zhao \cite{HZZ} determined $\d_1(n, K)$ asymptotically for all complete $3$-partite $3$-graphs $K$. 
See the surveys~\cite{rrsurvey, zsurvey} for detailed overviews of matching and tiling problems in hypergraphs.

Let $K^- _4$ denote the $3$-graph with $4$ vertices and $3$ edges. Lo and Markstr\"om~\cite{LoMa} proved that 
$n/2 -1 \le \d(n, K_4^-)\le n/2 + o(n)$. Let us recall the construction that gives the lower bound. Given two disjoint vertex sets $A, B$, define $\mathcal B[A,B]$ to be the $3$-graph on $A\cup B$ whose edge set consists of all those triples that contain an odd number of vertices from $A$. 
Suppose that $n \equiv 0 \mod 4$. If $n \not \equiv 0 \mod 3$ and $|A|=|B|=n/2$, we have that $\delta _2 (\mathcal B[A,B]) =n/2-2$ but $\mathcal B[A,B]$ does not contain a  $K^- _4$-factor.
If $n  \equiv 0 \mod 3$ and $|A|=n/2+1$, $|B|=n/2-1$, again we have that $\delta _2 (\mathcal B[A,B]) =n/2-2$ but $\mathcal B[A,B]$ does not contain a  $K^- _4$-factor. (See Proposition~1 in~\cite{LoMa} for details.)

In this paper we determine $\d(n, K_4^-)$ exactly for sufficiently large $n$, thereby resolving a conjecture of Lo and Markstr\"om~\cite{LoMa}.

\begin{thm}\label{mainthm}
There exists an $n_0 \in \mathbb N$ such that the following holds. Suppose that $H$ is a $3$-graph on $n \geq n_0$ vertices where $n$ is divisible by $4$. If $\delta _2 (H) \geq n/2-1$ then $H$ contains a  $K^- _4$-factor.  Thus, $\d(n, K_4^-)= n/2 -1$ for $n\in 4\mathbb{N}$ and $n\ge n_0$.
\end{thm}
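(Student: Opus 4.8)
The plan is to use the absorbing method, following the now-standard two-stage strategy for factor problems in hypergraphs. First I would prove an \emph{absorbing lemma}: there is a small set $M$ of vertices (of size linear in $n$ but with a tiny density constant) such that $H[M]$ has a $K_4^-$-factor, and moreover for every sufficiently small set $W$ of vertices with $|W| \in 4\mathbb{N}$ and $W \cap M = \emptyset$, the induced subgraph $H[M \cup W]$ also has a $K_4^-$-factor. The standard way to build such an $M$ is to show that for a typical quadruple of vertices $Q$ (in fact every quadruple, given our strong codegree condition) there are many \emph{absorbers} --- small constant-size vertex sets $A$ such that both $H[A]$ and $H[A\cup Q]$ have $K_4^-$-factors --- and then take a random subset of these absorbers. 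The point is that a $K_4^-$-factor is very ``flexible'': $K_4^-$ has several non-isomorphic copies, and with codegree about $n/2$ one can splice in four extra vertices in many ways, so proving the existence of enough absorbers should be routine. Crucially, since $|M|$ is tiny, $\delta_2(H[V \setminus M]) \geq n/2 - 1 - |M| \geq n/2 - 2 - o(n)$, so the remaining graph still has essentially the extremal codegree.

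Second, I would find an \emph{almost-perfect} $K_4^-$-tiling in $H' := H[V\setminus M]$ covering all but a small number (at most, say, $\eta n$ for suitable small $\eta$, and in particular a multiple of $4$ can be arranged after tidying) of vertices. This is precisely where the result of Keevash and Mycroft~\cite{mycroft} on almost perfect matchings in hypergraphs comes in: one passes to an auxiliary $4$-uniform hypergraph whose vertex set is $V(H')$ and whose edges are the vertex-$4$-sets spanning a copy of $K_4^-$ in $H'$, observes that the codegree condition on $H$ forces a large minimum vertex degree (and appropriate higher-degree conditions) on this auxiliary hypergraph, and invokes their theorem to get a matching covering all but $o(n)$ vertices. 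Then the leftover set $W$ has size $o(n)$, so the absorbing property of $M$ finishes the job: $H[M\cup W]$ has a $K_4^-$-factor, and combined with the almost-perfect tiling on the rest we obtain a $K_4^-$-factor of all of $H$. The matching lower bound $\d(n,K_4^-)\ge n/2-1$ is given by the construction $\mathcal{B}[A,B]$ already recalled in the excerpt, so Theorem~\ref{mainthm} follows.

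The main obstacle, I expect, is not the almost-perfect tiling step --- which is essentially a black-box application of \cite{mycroft} once the degree transfer to the auxiliary hypergraph is checked --- but rather making the absorbing lemma work at the \emph{exact} threshold $n/2-1$ rather than at $n/2 + o(n)$. In particular, one must be careful that removing the absorbing set $M$ does not push the codegree below the value needed for the Keevash--Mycroft input, and one must handle the divisibility/parity subtleties (the leftover set $W$ must have size divisible by $4$; the extremal example $\mathcal B[A,B]$ shows the answer genuinely depends on $n \bmod 3$ and hints at a parity obstruction, so the absorber analysis has to certify that no such obstruction survives when $\delta_2(H)\ge n/2-1$). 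A natural way to address the latter is a stability-type dichotomy: either $H$ is ``close'' to the extremal configuration $\mathcal B[A,B]$, in which case a direct and somewhat delicate argument produces a $K_4^-$-factor by exploiting that $H$ is denser than $\mathcal B[A,B]$ by at least one edge per pair, or $H$ is ``far'' from it, in which case the generic absorbing-plus-almost-tiling argument goes through with room to spare. I would structure the write-up so that the extremal case is isolated as a separate lemma, since that is where the loss of the $o(n)$ slack really bites.
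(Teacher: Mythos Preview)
Your high-level plan matches the paper's exactly: the proof splits into an extremal lemma (direct argument when $H$ is close to $\mathcal{B}[A,B]$), an absorbing lemma valid in the non-extremal case, and an almost-tiling lemma via Keevash--Mycroft, combined just as you describe. Two points of calibration are worth noting. First, the absorbing step in the non-extremal case is not ``routine'' and does not ``go through with room to spare''; it is the heart of the paper and by far its longest section. Rather than building absorbers for quadruples directly, the paper shows that $H$ is \emph{$(c,\eta)$-closed} (any two vertices have many connectors) by partitioning $V(H)$ into at most four closed classes and then merging them via many ``bridges'' --- and it is this merging, which relies essentially on non-extremality through Lemma~\ref{lem:C}, that carries the technical weight. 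Your parenthetical ``in fact every quadruple, given our strong codegree condition'' is exactly what fails near the extremal configuration, and establishing it away from extremal requires real work. Second, the Keevash--Mycroft application is not quite a black box: the natural $4$-system one writes down may fail their space-barrier condition, and the paper has to augment it (twice, in the worst case) before the lemma applies.
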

The proof of Theorem~\ref{mainthm} makes use of the \emph{absorbing method} -- a technique that was first used in~\cite{rrs2} and has  subsequently been
applied to numerous embedding problems in extremal graph theory. We also apply a result of Keevash and Mycroft~\cite{mycroft} concerning almost perfect matchings in hypergraphs.

The paper is organised as follows. 
In the next section we derive Theorem~\ref{mainthm}  from three main lemmas, preceded by an overview of the proof and a comparison with the proof in \cite{LoMa}.
We give some useful tools in Section~\ref{secuse}. We prove an {almost} perfect tiling lemma in Section~\ref{secalmost} and an {absorbing} lemma in Section~\ref{secabs}. The {non-extremal} case is tackled in Section~\ref{secex}.


\section{Notation and proof of Theorem~\ref{mainthm}}\label{secnotation}
\subsection{Notation}
Given a set $X$ and  $r\in \mathbb N$, we write $\binom{X}{r}$ for the set of all $r$-element subsets  of $X$.
For simplicity, given vertices $x_1, \dots, x_t$ and a set of vertices $S$, we often write $x_1\cdots x_t$ for $\{x_1,\dots, x_t\}$ and $S\cup x_1$ for $S\cup \{x_1\}$.

Let $H$ be a $3$-graph.
We write $V(H)$ for the vertex set and $E(H)$ for the edge set  of $H$.
Define $e(H):=|E(H)|$. 
We denote the \emph{complement of $H$} by $\overline{H}$. That is, $\overline{H} := (V(H), \binom{V(H)}{3}\setminus E(H))$.
Given $x,y \in V(H)$,  we write $N_H(xy)$  to denote the \emph{neighborhood of $xy$}, that is, the family of those vertices in $V(H)$ which, together with $x,y$, form an edge in $H$. 
If $X \subseteq V(H)$ we write $N_H (xy,X):= N_H (xy)\cap X$, and $\deg _H (xy,X):= |N_H (xy,X)|$. For this and similar notation, we often omit the subscript if the underlying hypergraph is clear from the context.

Given $X \subseteq V(H)$, we write $H[X]$ for the \emph{subhypergraph of $H$ induced by $X$},  namely, $H[X] := (X, E(H)\cap \binom{X}{3})$. We write $e_H (X)$ or simply $e(X)$ for $e(H[X])$. In addition, we let $H \setminus X := H[V(H) \setminus X]$.
If $K$ is a spanning subgraph of $H[X]$, then we say that $X$ \emph{spans a copy of $K$ in $H$}. In particular, this does not necessarily mean that $X$ \emph{induces} a copy of $K$ in $H$. When counting the number of copies of $K$ in $H$, we only count the number of subsets of $V(H)$ that span copies of $K$ in $H$. For example, we say that $K_4^3$ only contains one copy of $K_4^-$ (instead of four copies).

Let $\gamma> 0$ and $H$ and $H'$ be two 3-graphs on the same vertex set $V$. We say that $H$ \emph{$\gamma$-contains} $H'$ if $|E(H')\setminus E(H)| \le \gamma |V|^3$, namely, $H$ misses at most $\gamma |V|^3$ edges from $H'$. 
Given  $\gamma>0$, we call a 3-graph $H=(V, E)$ on $n$ vertices \emph{$\gamma$-extremal} if there is a partition of $V= A\cup B$ such that $|A|= \lfloor n/2 \rfloor $, $|B|= \lceil n/2 \rceil$ and $H$ $\gamma$-contains $\mathcal B[A,B]$.

For any $x\in V(H)$, we define the \emph{link graph} $L_x$ to be the graph with vertex set $V(H)\setminus \{x\}$ and where $yz \in E(L_x)$ if and only if $xyz \in E(H)$.
Let $G$ be a graph, $X \subseteq V(G)$ and $x \in V(G)$. We define
$N_G (x), e_G(X), G[X], N_G (x,X), \deg _G (x,X)$ analogously to the $3$-graph case.
We write $\delta (G)$ for the \emph{minimum degree of $G$} and $\Delta (G)$ for
the \emph{maximum degree of $G$}. Given disjoint $X,Y \subseteq V(G)$, we write
$e_G (X,Y)$ for the number of edges in $G$ with one endpoint in $X$ and the other endpoint in $Y$. 

Throughout the paper, we write $0<\alpha \ll \beta \ll \gamma$ to mean that we can choose the constants
$\alpha, \beta, \gamma$ from right to left. More
precisely, there are increasing functions $f$ and $g$ such that, given
$\gamma$, whenever we choose $\beta \leq f(\gamma)$ and $\alpha \leq g(\beta)$, all
calculations needed in our proof are valid. 
Hierarchies of other lengths are defined in the obvious way.

\subsection{Overview of the proof of Theorem~\ref{mainthm}}
In the next subsection we will combine the three main lemmas of the paper to prove Theorem~\ref{mainthm}. Before this we give an overview of the proof.
It is instructive to first describe the strategy used in~\cite{LoMa} to prove the asymptotic version of Theorem~\ref{mainthm}.

Let $0< \eps \ll \gamma \ll \eta$ and $n$ be sufficiently large. Suppose that $H$ is a  $3$-graph on $n$ vertices where $\delta _2 (H) \geq (1/2+\eta)n$.
The proof in~\cite{LoMa} splits into two main tasks.
\begin{itemize}
\item {\bf Step 1 (Absorbing set):} Find an \emph{absorbing} set $W \subseteq V(H)$ such that $|W|\leq \gamma n$. $W$ has the property that given \emph{any} set $U \subseteq V(H) \setminus W$ where $U \in 4 \mathbb N$ and $|U|\leq  \eps n$, both
$H[W]$ and $H[W \cup U]$ contain $K^- _4$-factors. 

\item {\bf Step 2 (Almost tiling):} Let $H':= H \setminus W$. Find a $K^- _4$-tiling $\mathcal K$ in $H'$ that covers all but at most $\eps n$ vertices.
\end{itemize}
Note that after Steps 1 and 2 one immediately obtains a $K^- _4$-factor in $H$. Indeed, let $U:=V(H')\setminus V(\mathcal K)$. Then $H[W \cup U]$ contains a $K^- _4$-factor $\mathcal K'$ and so $\mathcal K \cup \mathcal K'$ is a $K^- _4$-factor in $H$.

To show that $H$ contains the desired absorbing set $W$, Lemma~1.1 in~\cite{LoMa} implies that it suffices to show that $H$ is \emph{closed}. Roughly speaking, $H$ is closed if, for any $x,y \in V(H)$, there are many small sets $S \subseteq V(H)$ such that
both $H[S\cup x]$ and $H[S\cup y]$ contain $K^- _4$-factors (see Section~\ref{secabs} for the formal definition).
Using that $\delta _2 (H) \geq (1/2+\eta)n$, it is not too difficult to show that there is a partition of $V(H)$ into at most three parts such that each of these partition classes is closed.
So a key task in~\cite{LoMa} is to `merge' these closed classes into a single closed class. 
For this, it suffices to show that are many `bridges' between the partition classes (see Lemma~\ref{lem:bridge}): An $(X,Y)$-bridge is a triple $(x,y,S)$ where $x \in X$, $y \in Y$ and $S \subseteq V(H)$ such that 
$H[S\cup x]$ and $H[S\cup y]$ contain $K^- _4$-factors. This is precisely the strategy used in~\cite{LoMa} to prove that $H$ is closed, and thus contains an absorbing set $W$.
A short argument then shows that, since $\delta _2 (H') \geq (1/2+\eta/2)n$, $H'$ contains an almost perfect $K^- _4$-tiling, as desired.

We now turn to our proof of Theorem~\ref{mainthm}. Let $H$ be a sufficiently large $3$-graph on $n$ vertices where $\delta _2 (H) \geq n/2-1$. 
If $H$ is close to the extremal example $\mathcal B[A,B]$ then it is not clear whether one can find an absorbing set in $H$. Indeed, let $H^*:=\mathcal B[A,B]$ where $|A|=|B|=n/2$. 
Suppose that $U \subseteq B$ where $|U|=4$. Consider any $W \subseteq V(H^*)$ such that
$H^*[W]$ contains a $K^- _4$-factor. Then it is easy to see that $|W \cap B| \equiv 0 \mod 3$. However, for any such set $W$, $H^*[W \cup U]$ does not contain a $K^- _4$-factor as $|(W\cup U) \cap B| \equiv 1 \mod 3$.

Thus, in the case when $H$ is close to the extremal example $\mathcal B[A,B]$ we do not use the absorbing method. Instead, in Section~\ref{secex}, we give a direct argument to show that $H$ contains a $K^- _4$-factor.
In the case when $H$ is non-extremal we follow Steps~1 and~2 as above. However, since we now only have that $\delta _2 (H) \geq n/2-1$, the argument becomes significantly more involved.

To find an absorbing set when $H$ is non-extremal we again show that $H$ is closed. Suppose that there exists $x \in V(H)$ such that there are very few edges $abc \in E(H)$ so that
$a b c x$ spans a copy of $K^- _4$ in $H$. In this case we give a direct argument to show that $H$ contains an absorbing set (see Lemma~\ref{lem:ve}).
Otherwise, we show that our minimum codegree ensures that $V(H)$ can be partitioned into at most \emph{four} sets 
such that each is closed in $H$ (see Lemma~\ref{lem:P}).
We again merge these sets into a single closed class by finding many bridges between the sets. For this, we use that if $H$ is non-extremal then in any partition $A,B$ of $V(H)$ with $|A|,|B|\geq n/5$, we have many edges that intersect $A$ in precisely $1$ vertex
and many edges that intersect $A$ at precisely $2$ vertices (see Lemma~\ref{lem:C}). The process of proving that non-extremal 3-graphs $H$ are closed is quite involved and forms the heart of the paper (most of Section~\ref{secabs} is devoted to this task).

In Section~\ref{secalmost} we tackle Step~2 for non-extremal $3$-graphs $H$. Our lower minimum codegree condition means that we cannot use the argument from~\cite{LoMa} here. Instead, we translate the problem to one on almost perfect matchings in hypergraphs.
We then (somewhat carefully) apply a result of Keevash and Mycroft~\cite{mycroft} to obtain an almost perfect matching in some auxiliary hypergraph whose $4$-edges correspond to copies of $K^- _4$ in $H'$.
Thus, we obtain an almost perfect $K^-_4$-tiling in $H'$.

\subsection{Proof of Theorem~\ref{mainthm}}\label{secsketch}
As outlined in the previous subsection, the proof of Theorem~\ref{mainthm} consists of three main parts: the extremal case; obtaining an absorbing set in the non-extremal case; and finding an almost perfect tiling in the non-extremal case.

Our first lemma deals with  the latter part. In fact, it implies that $H$ has an almost perfect $K_4^-$-tiling even if $\delta_2(H)$ is (slightly) less than $n/2$.

\begin{lemma}\label{lem:almost}
Let $1/n \ll \phi \ll \gamma \ll 1$.
Let $H$ be a $3$-graph on $n$ vertices with $\delta_{2}(H)\ge (1/2 - \gamma) n$.
Then $H$ contains a $K_4^-$-tiling covering all but at most $\phi n$ vertices.
\end{lemma}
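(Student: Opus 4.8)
The plan is to reduce the almost-perfect tiling problem to a matching problem in an auxiliary $4$-uniform hypergraph and then apply the almost-perfect matching result of Keevash and Mycroft \cite{mycroft}. Define the $4$-graph $\mathcal{H}$ on vertex set $V(H)$ whose edges are exactly the $4$-sets that span a copy of $K_4^-$ in $H$. A $K_4^-$-tiling in $H$ covering all but at most $\phi n$ vertices is precisely a matching in $\mathcal{H}$ covering all but at most $\phi n$ vertices, so it suffices to show $\mathcal{H}$ has such a matching. The key structural input we need is a lower bound on the codegrees of $\mathcal{H}$: given any pair (or, more usefully, any single vertex or pair of vertices) of $V(H)$, how many copies of $K_4^-$ through it can we guarantee from $\delta_2(H) \ge (1/2-\gamma)n$? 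A short counting argument shows that for a typical vertex $x$, its link graph $L_x$ has minimum degree at least roughly $(1/2-\gamma)n$, hence contains many edges, and for most choices of an edge $yz \in L_x$ the common neighborhood $N_H(xy) \cap N_H(xz)$ (equivalently $N_{L_x}(y) \cap N_{L_x}(z)$, together with common neighbors in $H$) is large — of size at least roughly $(2(1/2-\gamma)-1)n = \Omega(n)$ after a union bound — giving $\Omega(n)$ copies of $K_4^-$ through each such pair, and in total $\Omega(n^3)$ copies through each vertex.

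The main obstacle is that $\delta_2(H) \ge (1/2-\gamma)n$ is too weak to guarantee a matching covering \emph{all} vertices of $\mathcal{H}$ (indeed, the extremal example $\mathcal B[A,B]$ shows one cannot even get a perfect $K_4^-$-factor at this codegree when the part sizes are wrong mod $3$), so we must use a result that delivers an almost-perfect matching under a degree/codegree condition that does \emph{not} force a perfect one. The Keevash–Mycroft machinery is designed exactly for this: it guarantees a matching missing only a constant (or $o(n)$) number of vertices provided the hypergraph has large enough minimum codegree, \emph{without} any divisibility or "space barrier" hypothesis. The delicate point is checking that the codegree bound we extract for $\mathcal{H}$ (which is a $4$-graph, so the relevant notion is the minimum $(4-1)$-degree, i.e.\ the number of edges through a given triple) meets the threshold required by their theorem. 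Here we must be careful: a triple $S$ of vertices of $H$ might span very few or no copies of $K_4^-$ (e.g.\ if $S$ itself spans no edge of $H$), so $\delta_3(\mathcal{H})$ could be $0$. Thus we cannot apply their codegree version directly to $\mathcal{H}$ as is.

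To get around this, I would instead first pass to a large induced subhypergraph of $\mathcal{H}$ of bounded "deficiency", or — more robustly — use a greedy/probabilistic deletion argument to remove a small ($o(n)$) set of bad vertices and triples, after which the remaining auxiliary hypergraph has the codegree needed, and a near-perfect matching there plus a trivial bound on the deleted vertices finishes the job. Concretely: (i) iteratively pull out copies of $K_4^-$ greedily while the uncovered set is large and "rich" in copies — using the $\Omega(n^3)$-copies-per-vertex bound to keep going as long as $\ge \phi n$ vertices remain and the induced $3$-graph still satisfies a codegree bound close to $(1/2-\gamma)n$; (ii) alternatively, and this is what I expect the authors do, invoke the appropriate theorem from \cite{mycroft} about almost-perfect matchings which only needs the codegree bound to hold "on average" or for most tuples, matching the $\Omega(n^3)$-copies bound we proved. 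Either way the hard part is the bookkeeping that converts $\delta_2(H) \ge (1/2-\gamma)n$ into the exact hypothesis of the cited matching result and verifying the error terms combine to at most $\phi n$. The minimum-degree-to-codegree translation in $\mathcal{H}$, and making sure the constant hierarchy $1/n \ll \phi \ll \gamma \ll 1$ is respected throughout, is routine but is where all the care goes.
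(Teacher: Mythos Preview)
Your high-level plan is right --- build an auxiliary $4$-uniform structure on $V(H)$ whose top-level edges are the $K_4^-$-copies and apply Keevash--Mycroft --- and you correctly locate the real difficulty: triples that are not edges of $H$ need not extend to any $K_4^-$, so $\delta_3(\mathcal H)$ can be $0$. But your two proposed fixes do not actually close this gap, and neither is what the paper does.

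The paper's key move is not to delete anything or to appeal to an ``on average'' codegree hypothesis; instead it works with a $4$-\emph{system} $J$ in the sense of Keevash--Mycroft, with $E(J_3)=E(H)$ and $E(J_4)=\{\text{4-sets spanning }K_4^-\}$. The point is that in a $k$-system the minimum degree $\hat\delta_r(J)$ is a minimum over \emph{edges of $J_r$}, not over all $r$-sets. So by declaring the $3$-edges of $J$ to be exactly the edges of $H$, the problematic non-edge triples simply never enter the degree-sequence condition, and Proposition~\ref{prop:le} gives $\hat\delta_3(J)\ge (1/4-3\gamma/2)n$ immediately. Your proposal treats $\mathcal H$ as a bare $4$-graph and so never gets access to this trick; the ``remove $o(n)$ bad vertices'' idea cannot help, since the bad objects are triples, and there are potentially $\Theta(n^3)$ of them.

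There is a second missing idea. The Keevash--Mycroft lemma used (a special case of their Lemma~7.6) has, in addition to the degree-sequence hypothesis, a \emph{space-barrier} condition: for every $p\in[3]$ and every $S$ of size $\lfloor pn/4\rfloor$ one needs $e(J_{p+1}[S])\ge\beta n^{p+1}$. The cases $p=1,3$ are easy, but $p=2$ can fail: there may exist a set $S$ of size about $n/2$ with $e(H[S])<\beta n^3$. The paper handles this not by deletion but by \emph{enlarging} $E(J_3)$: inside such a sparse $S$, most triples $T$ have all three pairs with large degree into $V\setminus S$, hence $|L(T)|\ge(1/2-o(1))n$, so one can safely add these non-edge triples to $J_3$ without damaging $\hat\delta_3$. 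One then re-checks the space-barrier condition for the augmented system $J'$; if it fails again at some $S'$, one shows $|S\cap S'|$ is tiny and repeats once more, after which the condition is forced to hold. This iterative augmentation of the $3$-level of the system is the heart of the argument, and your proposal does not anticipate it.
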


The next result yields the absorbing set in the non-extremal case.

\begin{lemma} \label{lem:abs}
Let $1/n \ll \phi \ll \eps \ll \r \ll 1$.
Let $H$ be a $3$-graph of order~$n$ with $\delta_{2}(H)\ge (1/2 - \r) n$. 
Suppose that $H$ is not $3\gamma$-extremal.
Then there exists an absorbing set $W\subseteq V(H)$ of order at most $\e n$ so that for any $U \subseteq V(H) \setminus W$ with $|U|\le \phi n$ and $|U|\in 4\mathbb{N}$, both $H[W]$ and $H[U\cup W]$ contain $K^- _4$-factors.
\end{lemma}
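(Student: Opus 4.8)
The plan is the one sketched above: show that $H$ is \emph{closed} (every pair of vertices admits polynomially many ``connectors''), and then feed this into the absorbing machinery of Lo--Markstr\"om. First I would isolate a degenerate configuration. Call $x\in V(H)$ \emph{bad} if at most $\delta n^3$ edges $abc\in E(H)$ are such that $abcx$ spans a copy of $K_4^-$, where $\phi\ll\delta\ll\e$. If $H$ has a bad vertex, the hypothesis $\delta_2(H)\ge(1/2-\gamma)n$ pins down enough local structure that the absorbing set can be built by hand; this is exactly Lemma~\ref{lem:ve}, which I would invoke to finish this case. So from now on every vertex is \emph{good}, i.e.\ lies in $\Omega(n^3)$ copies of $K_4^-$.

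Next I would partition $V(H)$ into a bounded number of closed parts and then merge them. Here I call $x,y$ (directly) reachable if there is a fixed bounded $\ell$ with at least $\beta n^{4\ell-1}$ sets $S$ of size $4\ell-1$ for which both $H[S\cup x]$ and $H[S\cup y]$ have $K_4^-$-factors, and a vertex set is closed if each of its pairs is reachable. Using the codegree bound together with goodness, Lemma~\ref{lem:P} provides a partition $V(H)=V_1\cup\dots\cup V_t$ with $t\le 4$, each $V_i$ closed and of size at least $n/5$; the bound $t\le 4$, rather than an unbounded number of classes, comes from a structural analysis of the link graphs $L_x$ (each of minimum degree at least $n/2-1$) and of which near-extremal shapes they can take. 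To merge the classes it suffices, by Lemma~\ref{lem:bridge}, to produce for every pair $V_i,V_j$ many $(V_i,V_j)$-bridges, i.e.\ triples $(x,y,S)$ with $x\in V_i$, $y\in V_j$, and both $H[S\cup x]$ and $H[S\cup y]$ having $K_4^-$-factors. I would build these bridges from the ``crossing'' edges guaranteed by non-extremality: since $H$ is not $3\gamma$-extremal, Lemma~\ref{lem:C} yields, in every bipartition of $V(H)$ into two parts of size at least $n/5$, $\Omega(n^3)$ edges meeting the first part in exactly one vertex and $\Omega(n^3)$ meeting it in exactly two, and combining such edges with copies of $K_4^-$ inside $V_i$ and $V_j$ gives the required supply. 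Iterating Lemma~\ref{lem:bridge} then amalgamates $V_1,\dots,V_t$ into one closed class, so $H$ is closed.

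Once $H$ is closed, the absorbing set is produced by the standard argument behind Lemma~1.1 in~\cite{LoMa}: choose at random a small family of connectors, one family for each of the $\Theta(n^4)$ potential $4$-sets to be absorbed, together with a random small $K_4^-$-tiling; with positive probability all but a tiny fraction of the choices are usable and pairwise disjoint, the defective ones are discarded, and a few spare copies of $K_4^-$ inside the chosen set correct divisibility. The union is then an absorbing set $W$ with $|W|\le\e n$ such that $H[W]$ has a $K_4^-$-factor and $H[W\cup U]$ has a $K_4^-$-factor for every $U\subseteq V(H)\setminus W$ with $|U|\le\phi n$ and $|U|\in 4\mathbb N$. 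I expect the hard part to be the closedness statement, i.e.\ Lemmas~\ref{lem:P}, \ref{lem:C} and~\ref{lem:bridge} and their combination: the bound $\delta_2(H)\ge n/2-1$ is essentially tight, and the example $\mathcal B[A,B]$ shows that an $H$ close to it genuinely has no absorbing set, so every estimate in the merging step must actively use non-extremality, and keeping the number of closed classes bounded --- in fact at most four --- requires a delicate analysis of the link graphs.
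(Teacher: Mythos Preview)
Your high-level plan matches the paper's: split off the bad-vertex case via Lemma~\ref{lem:ve}, partition into at most four closed parts via Lemma~\ref{lem:P}, merge parts until $H$ is closed, then apply Lemma~\ref{lem:abs3}. But the step where you produce bridges is where the argument has a real gap, and your proposed mechanism does not match what is actually needed.

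You write that crossing edges from Lemma~\ref{lem:C}, ``combined with copies of $K_4^-$ inside $V_i$ and $V_j$,'' supply the bridges. This is not enough. A bridge requires a set $S$ with $H[S\cup x]$ and $H[S\cup y]$ both having $K_4^-$-factors; a crossing edge together with two internal copies of $K_4^-$ does not obviously produce such an $S$. What the paper actually does is first locate many copies of $K_4^-$ whose four vertices are \emph{distributed across} the parts in a specific way (type $V_{i_1}V_{i_2}V_{i_3}V_{i_4}$ with $\{i_1,i_2\}\cap\{i_3,i_4\}=\emptyset$), and then feeds these into a separate structural lemma (Lemma~\ref{lem:B5}) which, via a detailed analysis of ``typical'' triples and an auxiliary $K_{3,3}$ argument (Lemmas~\ref{lem:b1}--\ref{clm:case1}), converts them into bridges. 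A companion tool (Lemma~\ref{lem:tran}) handles the case where one can find many $K_4^-$'s of two adjacent type-vectors. These intermediate lemmas are the real content of Section~\ref{secabs} and are not subsumed by Lemma~\ref{lem:bridge}, which only says what to do \emph{once} bridges exist.

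A second point: non-extremality via Lemma~\ref{lem:C} is not used uniformly across the cases. When $d\in\{3,4\}$ the parts each have size between roughly $n/4$ and $n/2$, and the paper obtains the required cross-type $K_4^-$'s from codegree counting alone (using Propositions~\ref{prop:le} and~\ref{prop:axy}); Lemma~\ref{lem:C} is invoked only when $d=2$, precisely because that is the case where $H$ could be close to $\mathcal B[A,B]$. Your outline applies Lemma~\ref{lem:C} to every bipartition, which would not help when the two parts under consideration are, say, $V_1$ and $V_2\cup V_3\cup V_4$ with $d=4$. So the case analysis on $d$ and the corresponding source of cross-type $K_4^-$'s need to be made explicit, and the conversion of such $K_4^-$'s into bridges (Lemma~\ref{lem:B5}) is a substantial piece you have not accounted for.
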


If $H$ is extremal, then we will find a  $K_4^-$-factor using the following lemma.

\begin{lemma}\label{extremallemma}
There exist $\gamma >0$ and $n_0 \in \mathbb N$ such that the following holds. Suppose that $H$ is a $3$-graph on $n \geq n_0$ vertices where $n$ is divisible by $4$.
If $\delta _2 (H) \geq n/2-1$ and $H$ is $\gamma$-extremal, then $H$ contains a  $K^- _4$-factor.
\end{lemma}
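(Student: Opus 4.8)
The plan is to exploit the near-extremal structure to reduce Lemma~\ref{extremallemma} to an almost-complete tiling problem; the only genuine difficulty will be a divisibility obstruction modulo~$3$. \emph{Step 1: cleaning the partition.} I would choose constants $\gamma\ll\gamma'\ll1$ and take the partition $V(H)=A\cup B$ witnessing $\gamma$-extremality, noting $|A|=|B|=n/2$ since $4\mid n$. Averaging over the at most $\gamma n^3$ edges of $\mathcal B[A,B]$ missing from $H$, all but at most $\gamma'n$ vertices $v$ have a link graph $L_v$ which, for $v\in A$, contains all but $\le\gamma'n^2$ edges of $K_{A\setminus v}\cup K_B$, and for $v\in B$ contains all but $\le\gamma'n^2$ edges of the complete bipartite graph with parts $A$ and $B\setminus v$. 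Let $A_1,B_1$ be these ``typical'' vertices of $A,B$ and set $V_0:=V(H)\setminus(A_1\cup B_1)$, so $|V_0|,|A\triangle A_1|,|B\triangle B_1|\le\gamma'n$, and (after moving a few vertices) no vertex of $B_1$ has a link close to an $A$-type link, or conversely. Two facts will be used throughout: (a) for each $a\in A_1$ the graph $L_a$ restricted to $B_1$, and to $A_1\setminus a$, is a nearly complete graph on $\approx n/2$ vertices, and symmetrically for $b\in B_1$; (b) \emph{the crucial slack} --- for every pair $\{x,y\}\in\binom A2$ one has $|N_H(xy)\cap A|\le|A|-2=n/2-2<\delta_2(H)$, so $N_H(xy)$ meets $B$; thus $H$ has an ``$AAB$-edge'' through every pair inside $A$, and this is the only structure beyond $\mathcal B[A,B]$ that $\delta_2(H)\ge n/2-1$ guarantees. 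Since $\mathcal B[A,B]$ has no $K_4^-$-factor when $3\nmid n$, this thin extra layer is exactly what must rescue us.

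\emph{Step 2: the reduction.} Say a copy of $K_4^-$ has \emph{type $A_iB_{4-i}$} if it has $i$ vertices in $A$. In $\mathcal B[A,B]$ the only copies are of type $A_1B_3$ (with the $BBB$-triple absent) and $A_4$ $(\supseteq K_4^3)$, and any copy whose number of $B$-vertices is not divisible by~$3$ uses an edge absent from $\mathcal B[A,B]$. The plan is to first build a vertex-disjoint family $\mathcal S$ of copies of $K_4^-$ in $H$ that covers $V_0$, has $|V(\mathcal S)|\le\sqrt{\gamma'}\,n$, and satisfies $|V(\mathcal S)\cap B|\equiv n/2\pmod3$. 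Setting $A'':=A_1\setminus V(\mathcal S)$ and $B'':=B_1\setminus V(\mathcal S)$, we then get $3\mid|B''|$ and $|A''|+|B''|=n-4|\mathcal S|\equiv0\pmod4$, which forces $4\mid(|A''|-|B''|/3)$. It then suffices to partition $A''\cup B''$ into $|B''|/3$ copies of type $A_1B_3$ and $(|A''|-|B''|/3)/4$ copies of type $A_4$; by fact~(a), $H[A'']$ is a nearly complete $3$-graph and each $L_a|_{B''}$ is nearly complete, so such a tiling follows from a routine greedy/probabilistic argument for almost-complete (hyper)graphs --- greedily match disjoint ``triangle triples'' of $B''$ to distinct vertices of $A''$, then split the remaining vertices of $A''$ into $4$-sets each spanning a $K_4^3$.

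\emph{Step 3: building $\mathcal S$ --- the heart of the matter.} I would cover each $v\in V_0$ by one copy of $K_4^-$, selected from the structure of $L_v$. Since $\delta(L_v)\ge n/2-1$ and $e(L_v)\ge(1-o(1))n^2/4$ but $L_v$ is close to neither extremal link type, $L_v$ contains a triangle (a triangle-free $L_v$ with this minimum degree would, by Andr\'asfai--Erd\H{o}s--S\'os stability, be nearly complete bipartite, which can be excluded for $v\in V_0$), and with more care one finds a triangle or a suitable short path supported on $A_1\cup B_1$; choosing the fourth vertex in $A_1\cup B_1$ via $\delta_2(H)\ge n/2-1$ produces the desired copy with all auxiliary vertices in $A_1\cup B_1$. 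Doing this greedily over the $\le\gamma'n$ vertices of $V_0$ removes only $O(\gamma'n)$ vertices and preserves the near-completeness of~(a). It then remains to correct $|V(\mathcal S)\cap B|$ modulo~$3$ using at most two further ``free'' copies built from fact~(b): a pair $\{x_1,x_2\}\subseteq A_1$ with an $AAB$-neighbour $z\in B$, together with a $b\in B_1$ such that $x_1zb,x_2zb\in E(H)$ (available for almost all $b$ by~(a) once $z$ is checked not to be bad for $x_1$ or $x_2$), gives a type-$A_2B_2$ copy using two $B$-vertices; and a vertex $y\in B$ lying in two $AAB$-edges through a common $A$-vertex gives a type-$A_3B_1$ copy using one $B$-vertex. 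Appending one or two such copies shifts $|V(\mathcal S)\cap B|$ to any desired residue. The argument here forks according to whether the $AAB$-edges guaranteed by~(b) are spread over many vertices of $B$ or concentrated on a few (necessarily exceptional) ones; the latter situation makes $H$ so rigid that it can be tiled essentially by hand.

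\emph{Main obstacle.} The whole difficulty sits in Step~3: $\delta_2(H)\ge n/2-1$ exceeds $\delta_2(\mathcal B[A,B])=n/2-2$ on pairs inside $A$ by only~$1$, so the edges witnessing fact~(b) form a very thin layer, and one must simultaneously cover all of $V_0$ and achieve the correct value of $|V(\mathcal S)\cap B|$ modulo~$3$ using only these scarce edges --- this is what forces the detailed case analysis on the link graphs of the exceptional vertices and on the distribution of the $AAB$-edges.
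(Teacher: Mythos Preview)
Your approach is essentially the paper's: clean the partition into typical vertices $A_1,B_1$ and a small exceptional set; cover the exceptional vertices by a short $K_4^-$-tiling; append one or two ``parity-breaking'' copies of non-standard type to correct a divisibility obstruction; then tile the near-complete remainder greedily. Your divisibility bookkeeping differs cosmetically --- you target $3\mid|B''|$ and deduce $4\mid(|A''|-|B''|/3)$, whereas the paper balances to $|A^*|=|B^*|\equiv0\pmod6$ and invokes a separate lemma (Lemma~\ref{allgood}) for that endgame --- but these are equivalent reductions.

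Where your sketch is thinner than the paper is precisely where you flag the obstacle. First, for covering $V_0$: your ``triangle in $L_v$, excluded from being bipartite by Andr\'asfai--Erd\H os--S\'os'' does not work as stated, since a vertex $v\in A\cap V_0$ can perfectly well have a nearly complete bipartite link (it simply behaves like a $B$-vertex). The paper handles this by \emph{reassigning} each exceptional vertex to the side its link resembles --- your parenthetical ``after moving a few vertices'' is doing real work --- and then proving that every vertex still exceptional after reassignment has either many $BB$-edges or many $AB$-edges in its link, hence lies in a copy of type $(1,3)$ (Claim~\ref{greed}).

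Second, for the parity correction: your proposed type-$A_3B_1$ copy --- a vertex $y\in B$ lying in two $AAB$-edges through a common $A$-vertex --- is not a consequence of fact~(b), which only guarantees one $B$-neighbour per $AA$-pair. The paper proves a dedicated claim (Claim~\ref{KK}) that $H$ always contains two copies of types $(2,2)$ or $(3,1)$ in one of three explicit combinations, via a case split on whether the post-reassignment $|B|$ is at most $n/2-1$ or at least $n/2$; this determines whether codegree forces an $A$-neighbour through every $AB$-pair or a $B$-neighbour through every $AA$-pair, and in either branch a further dichotomy (your ``spread versus concentrated'' fork) is needed. So your plan is correct, but Step~3 will require exactly the kind of case analysis the paper carries out rather than a single averaging step.
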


Theorem~\ref{mainthm} now follows easily from Lemmas~\ref{lem:almost}--\ref{extremallemma}.

\begin{proof}[Proof of Theorem~\ref{mainthm}]
Let $1/n \ll \phi \ll \eps \ll \r \ll 1$ with $n \in 4 \mathbb{N}$.
Let $H$ be a $3$-graph of order $n$ with $\delta_2(H) \ge n/2 - 1$.
If $H$ is $3\gamma$-extremal, then by Lemma~\ref{extremallemma} $H$ contains a  $K^- _4$-factor.

Therefore, we may assume that $H$ is not $3\gamma$-extremal.
By Lemma~\ref{lem:abs}, there exists an absorbing set $W\subseteq V(H)$ of order at most $\e n$ so that for any $U \subseteq V(H) \setminus W$ with $|U|\le \phi n$ and $|U|\in 4\mathbb{N}$, both $H[W]$ and $H[U\cup W]$ contain $K^- _4$-factors.
Let $H ':= H \setminus W$. 
Note that $n':=  |H '| \ge (1- \e) n $ and  $\delta_2(H') \ge n/2 -1 - \e n \ge (1/2- 2 \e )n'$.
By Lemma~\ref{lem:almost}, $H'$ contains a $K_4^-$-tiling $\mathcal{M}_1$ covering all but at most $\phi n'$ vertices.
Let $U := V(H') \setminus V (\mathcal{M}_1)$.
Since $|U| \le \phi n' \le \phi n $, $H[U\cup W]$ contains a  $K^- _4$-factor~$\mathcal{M}_2$.
Then $\mathcal{M}_1 \cup \mathcal{M}_2$ is a  $K_4^-$-factor in~$H$, as desired.
\end{proof}

\section{Useful results}\label{secuse}
We will need the following result, which follows immediately from a theorem of Baber and Talbot~\cite[Theorem~2.2]{BaberTalbot} and the \emph{supersaturation} phenomenon discovered by Erd{\H{o}}s and Simonovits~\cite{ErdosSimonovits}.

\begin{proposition}\label{prop:0.3}
There exist a constant $c'>0$ and an integer $n'$ such that every 3-graph $H$ of order $n \ge n'$ with $e(H)> 0.3\binom n3$ contains at least $c' n^4$ copies of $K_4^-$.
\end{proposition}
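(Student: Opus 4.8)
The plan is to combine the flag-algebra bound of Baber and Talbot with a standard supersaturation argument. Baber and Talbot~\cite[Theorem~2.2]{BaberTalbot} show that the Tur\'an density of $K_4^-$ is at most $0.2871\ldots < 0.3$; more precisely, there is a constant $\rho < 0.3$ such that any $3$-graph on $m$ vertices with more than $\rho\binom{m}{3}$ edges contains a copy of $K_4^-$, provided $m$ is at least some absolute constant $m_0$. First I would fix such a $\rho$ and $m_0$. The goal is then to upgrade ``contains one copy'' to ``contains $\Omega(n^4)$ copies'' for large $n$.

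The key step is the supersaturation principle of Erd{\H o}s and Simonovits~\cite{ErdosSimonovits}. Let $H$ have $n \ge n'$ vertices and $e(H) > 0.3\binom{n}{3}$, and write $\delta := 0.3 - \rho > 0$. I would count pairs $(K_4^-\text{-free?}, S)$ where $S$ ranges over the $\binom{n}{m_0}$ subsets of $V(H)$ of size $m_0$. By convexity (double counting edges inside random $m_0$-sets), the average edge density of $H[S]$ equals the edge density of $H$, which exceeds $0.3$; hence a positive proportion --- at least $\delta/2$, say, after a short averaging/Markov argument --- of the $m_0$-sets $S$ satisfy $e(H[S]) > \rho\binom{m_0}{3}$ and therefore span a copy of $K_4^-$ by the choice of $\rho, m_0$. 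This yields at least $(\delta/2)\binom{n}{m_0}$ subsets of size $m_0$ each containing a copy of $K_4^-$. Since each fixed $4$-set of vertices lies in exactly $\binom{n-4}{m_0-4}$ subsets of size $m_0$, the number of distinct $4$-sets spanning a copy of $K_4^-$ in $H$ is at least
\[
\frac{(\delta/2)\binom{n}{m_0}}{\binom{n-4}{m_0-4}} = \frac{\delta}{2}\cdot\frac{\binom{n}{m_0}}{\binom{n-4}{m_0-4}} = \frac{\delta}{2}\cdot\frac{n(n-1)(n-2)(n-3)}{m_0(m_0-1)(m_0-2)(m_0-3)} \ge c' n^4
\]
for a suitable constant $c' = c'(\delta, m_0) > 0$ and all sufficiently large $n$, which is the desired bound (recalling the paper's convention that copies of $K_4^-$ are counted as $4$-element vertex sets spanning such a subgraph).

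I do not expect a genuine obstacle here: both ingredients are cited results, and the argument is the textbook supersaturation reduction. The only points requiring a little care are (i) making sure the Markov-type step correctly produces a \emph{linear-in-$\binom{n}{m_0}$} family of ``dense'' $m_0$-sets rather than merely a nonzero number --- this needs that the density gap $\delta$ is a fixed constant, which it is --- and (ii) confirming that the Baber--Talbot bound is genuinely strictly below $0.3$ with room to spare, so that $\rho < 0.3$ can be chosen with $m_0$ absolute. Both are straightforward, so the proof is short.
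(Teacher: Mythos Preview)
Your proposal is correct and follows precisely the approach the paper intends: the paper states that the proposition follows immediately from Baber--Talbot's Tur\'an density bound for $K_4^-$ together with the Erd\H{o}s--Simonovits supersaturation phenomenon, and you have simply written out that standard deduction in full. There is nothing to add.
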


Let $H$ be a $3$-graph of order~$n$. 
For any 3-set $T\subseteq V(H)$, let $L_H(T)$, or simply $L(T)$, be the set of vertices $v$ such that $H[T\cup v]$ contains a copy of $K^- _4$.
If $T = xyz\in E(H)$, then $L(T)= (N(xy) \cap  N(yz)) \cup (N(xy) \cap  N(xz)) \cup (N(yz) \cap  N(xz))$. 
The following proposition gives a lower bound on such $|L(T)|$.

\begin{proposition}\cite[Proposition~2.1]{LoMa}\label{prop:le}
Let $H$ be a 3-graph of order $n$.
Then for every edge $e=xyz$ and any $U \subseteq V(H)$, $
|L(e) \cap U|\ge (\deg (xy, U) + \deg (yz, U) + \deg(xz, U) - |U|)/2$.
\end{proposition}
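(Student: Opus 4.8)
The plan is to prove the inequality for $U = V(H)$ first and then observe that the same argument localises to any $U$. Write $e = xyz \in E(H)$ and recall that $L(e) = (N(xy)\cap N(yz)) \cup (N(xy)\cap N(xz)) \cup (N(yz)\cap N(xz))$, i.e.\ $L(e)$ is the set of vertices $v$ lying in \emph{at least two} of the three neighbourhoods $N(xy), N(yz), N(xz)$. The key observation is a simple double-counting identity: for each vertex $v \in V(H)$, let $t(v)$ be the number of the three pairs $xy, yz, xz$ whose neighbourhood contains $v$, so $t(v) \in \{0,1,2,3\}$. Then $\sum_{v \in U} t(v) = \deg(xy,U) + \deg(yz,U) + \deg(xz,U)$, and $v \in L(e)$ precisely when $t(v) \ge 2$.

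The main (and essentially only) step is then the pointwise bound $t(v) \le 1 + 2\cdot\mathbf{1}[v \in L(e)]$, valid for every $v$: if $v \notin L(e)$ then $t(v) \le 1$, and if $v \in L(e)$ then trivially $t(v) \le 3 = 1 + 2$. Summing this over all $v \in U$ gives
\[
\deg(xy,U) + \deg(yz,U) + \deg(xz,U) \;=\; \sum_{v\in U} t(v) \;\le\; |U| + 2\,|L(e)\cap U|,
\]
and rearranging yields exactly $|L(e)\cap U| \ge \big(\deg(xy,U) + \deg(yz,U) + \deg(xz,U) - |U|\big)/2$, as claimed.

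There is essentially no obstacle here: the statement is a clean inclusion–exclusion / pigeonhole fact, and the only thing to be slightly careful about is the bookkeeping with the set $U$ — note that $\deg(xy,U) = |N(xy)\cap U|$ by definition, so restricting the sum $\sum_v t(v)$ to $v \in U$ is exactly what replaces each $\deg(\cdot)$ by $\deg(\cdot,U)$ and each $|L(e)|$ by $|L(e)\cap U|$; the three vertices $x,y,z$ themselves do not lie in any $N(\cdot)$ (since $H$ is a $3$-graph with sets of distinct vertices), so they contribute $t(v)=0$ and cause no trouble whether or not they lie in $U$. I would present the proof in the three lines above, perhaps with one sentence spelling out the pointwise inequality.
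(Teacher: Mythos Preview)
Your proof is correct. Note that the paper does not actually prove this proposition; it simply cites it as \cite[Proposition~2.1]{LoMa}, so there is no proof in the paper to compare against. Your argument is the standard one and is precisely what one would expect: defining $t(v)$ as the number of the three neighbourhoods containing $v$, using the pointwise bound $t(v) \le 1 + 2\cdot\mathbf{1}[t(v)\ge 2]$, and summing over $U$ is exactly the right approach.
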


Let $H$ be a $3$-graph and let $V_1,V_2,V_3 \subseteq V(H)$.
We say that an edge $v_1v_2v_3 \in E(H)$ is an \emph{$V_1V_2V_3$-edge} if $v_i \in V_i$ for all $i \in [3]$.
We denote by $e_H(V_1V_2V_3)$ the number of $V_1V_2V_3$-edges.
The following simple result will be  applied in the proof of the non-extremal case of Theorem~\ref{mainthm}. We remark that the property guaranteed by Lemma~\ref{lem:C} is in fact the only property of non-extremalness that will be used in the proof
of Lemma~\ref{lem:abs} (and thus in the entire proof of Theorem~\ref{mainthm}).

\begin{lemma}\label{lem:C} Let $0<1/n \ll \gamma <1/100$.
Suppose that $H$ is a $3$-graph of order $n$ where $\delta _2 (H) \geq (1/2-\gamma )n$.
Let $X, Y$ be any bipartition of $V(H)$ where $|X|,|Y|\geq n/5$. If $H$ is not $3\r$-extremal, then there exist at least $\r^2 n^3$ $XXY$-edges and at least $\r^2 n^3$ $XYY$-edges.
\end{lemma}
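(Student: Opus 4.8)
The plan is to prove the contrapositive: if $e_H(XXY)<\gamma^2 n^3$ then $H$ is $3\gamma$-extremal. Swapping the labels of $X$ and $Y$ turns $XXY$-edges into $XYY$-edges and preserves all hypotheses, so the statement about $e_H(XYY)$ follows by applying this implication to the bipartition $(Y,X)$; hence it suffices to prove the single implication. Write $s:=|X|$, so $|Y|=n-s$ and $n/5\le s\le 4n/5$.

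\emph{Step 1 (near-balance).} First I would deduce that $s\in\big((1/2-2\gamma)n,\,(1/2+2\gamma)n\big)$. Since $\sum_{xx'\in\binom{X}{2}}\deg(xx',Y)=e_H(XXY)<\gamma^2 n^3$ while $\binom{|X|}{2}$ has order $n^2$ (this is the only place the bound $|X|\ge n/5$ is used), averaging produces a pair $xx'\in\binom{X}{2}$ with $\deg(xx',Y)<\gamma n$; then $\deg(xx',X)=\deg(xx')-\deg(xx',Y)\ge(1/2-\gamma)n-\gamma n=(1/2-2\gamma)n$, and since $\deg(xx',X)\le|X|-2$ this forces $s>(1/2-2\gamma)n$. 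Applying the same averaging to the pairs $\{x,y\}$ with $x\in X$ and $y\in Y$, whose degrees into $X$ sum to $2e_H(XXY)$, yields a pair with $\deg(xy,X)<\gamma n$, hence $\deg(xy,Y)\ge(1/2-2\gamma)n$ and $|Y|>(1/2-2\gamma)n$, so also $s<(1/2+2\gamma)n$.

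\emph{Step 2 (closeness to $\mathcal B[X,Y]$).} The edge set of $\mathcal B[X,Y]$ is precisely the family of all $XXX$- and $XYY$-triples, so $|E(\mathcal B[X,Y])\setminus E(H)|$ is the number of $XXX$-non-edges of $H$ plus the number of $XYY$-non-edges of $H$. For the first quantity I would sum the complement of the codegree bound over $\binom{X}{2}$: for each $xx'\in\binom{X}{2}$,
\[
(|X|-2)-\deg(xx',X)\le(|X|-2)-(1/2-\gamma)n+\deg(xx',Y),
\]
and summing — the left side gives three times the number of $XXX$-non-edges, the terms $\deg(xx',Y)$ sum to $e_H(XXY)<\gamma^2 n^3$, and $(|X|-2)-(1/2-\gamma)n<3\gamma n$ by Step 1 — shows the number of $XXX$-non-edges is less than $\tfrac12\gamma n^3+\tfrac13\gamma^2 n^3$. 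The symmetric estimate over the $X$--$Y$ pairs (now summing $(|Y|-1)-\deg(xy,Y)$, which gives twice the number of $XYY$-non-edges) bounds the number of $XYY$-non-edges by $\tfrac38\gamma n^3+\gamma^2 n^3$. As $\gamma$ is small, $|E(\mathcal B[X,Y])\setminus E(H)|<\gamma n^3$.

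\emph{Step 3 (rebalancing).} By Step 1 we may move fewer than $2\gamma n+1$ vertices between $X$ and $Y$ to obtain a partition $V(H)=A\cup B$ with $|A|=\lfloor n/2\rfloor$ and $|B|=\lceil n/2\rceil$. Only triples meeting a moved vertex can change type, and there are fewer than $(2\gamma n+1)\binom{n-1}{2}<\tfrac32\gamma n^3$ such triples, so
\[
|E(\mathcal B[A,B])\setminus E(H)|<\gamma n^3+\tfrac32\gamma n^3<3\gamma n^3,
\]
i.e.\ $H$ is $3\gamma$-extremal, contrary to hypothesis. The conceptual heart is Steps 1--2, converting the edge-count hypothesis into genuine structural control; the main thing to watch is the constant bookkeeping, namely that the gap between the weak hypothesis ($\gamma^2 n^3$) and the generous target slack ($3\gamma n^3$) — which is ample since $\gamma^2\ll\gamma$ — comfortably absorbs both the crude averaging estimate for $s$ and the error created by rebalancing.
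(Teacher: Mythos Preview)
Your proof is correct and follows essentially the same approach as the paper: argue by contrapositive, use the codegree condition summed (you phrase it as averaged) over pairs in $\binom{X}{2}$ and over $X\times Y$ to force $|X|,|Y|\approx n/2$, then count missing $XXX$- and $XYY$-edges to show $H$ is close to $\mathcal B[X,Y]$, and finally rebalance. The only cosmetic differences are that the paper bounds $e(X)$ and $e(XYY)$ from below (edges present) whereas you bound the corresponding non-edges from above, and the paper reads off the size bounds directly from the edge counts rather than via an explicit ``pick one pair'' averaging step; the arithmetic and the resulting constants differ slightly but are interchangeable.
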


\begin{proof}
Suppose that $H$ contains fewer than $\r^2 n^3$ $XXY$-edges. We will show that $H$ is $3\r$-extremal. (The case when $H$ contains fewer than $\r^2 n^3$ $XYY$-edges is analogous.)
We have $\sum_{x, x'\in X}|N(x x')| = \sum_{x, x'\in X}|N(x x', X)| + \sum_{x, x'\in X}|N(x x', Y)|$ and $\sum_{x, x'\in X}|N(x x', Y)|\le \r^2 n^3$. Since $\delta_2(H)\ge (1/2 - \r)n$, we have $\sum_{x, x'\in X}|N(x x')|\ge \binom{|X|}2 (1/2 -\r)n$. So we get $3e(X)=\sum_{x, x'\in X}|N(x x', X)|\ge \binom{|X|}2 (1/2 -\r)n - \r^2 n^3$. 
Therefore, $e(X)\ge \frac13\binom{|X|}2 (1/2 - 2\r)n$ and in particular, $|X|\ge (1/2 - 2\r)n$.

Since $\sum_{x\in X, y\in Y} |N(x y)| = 2e(XXY) + 2e(XYY)$, we derive that $e(XYY)\ge \frac12|X| |Y|(1/2 - \r)n - \r^2 n^3$ similarly. Since $|X|, |Y|\ge n/5$, we get $e(XYY)\ge \frac12|X| |Y|(1/2 - 2\r)n$ and in particular, $|Y|\ge (1/2 - 2\r)n$.

Therefore we have $(1/2 - 2\r)n \le |X|, |Y| \le (1/2 + 2\r)n$. This implies that
\[
e(X)\ge \frac13\binom{|X|}2 (1/2 - 2\r)n \ge \frac13\binom{|X|}2 \frac{(1/2 - 2\r)}{(1/2 + 2\r)}|X|\ge (1-8\r)\binom{|X|}3.
\]
A similar calculation shows $e(XYY)\ge (1-8\r) |X| \binom{|Y|}2$. This implies that $| E(\B[X, Y])\setminus E(H) |\le 8\r \binom{n}3$. 
After moving at most $2\gamma n$ vertices from $X$ to $Y$ or from $Y$ to $X$, we obtain a bipartition $X', Y'$ of $V(H)$ such that $|X'|= \lfloor n/2 \rfloor$, 
$|Y'|= \lceil n/2 \rceil$. Then $| E(\B[X', Y']) \setminus E(\B[X, Y]) | \le 2\gamma n \binom{n-1}{2}$. 
Consequently 
\begin{align*}
| E(\B[X', Y']) \setminus E(H) | &\le | E(\B[X', Y']) \setminus E(\B[X, Y]) | + | E(\B[X, Y]) \setminus E(H)| \\
&\le 2\gamma n \binom{n-1}{2} + 8\r \binom{n}3 < 3\r n^3.
\end{align*}
Therefore $H$ is $3\r$-extremal.
\end{proof}

\begin{proposition}\label{prop:axy}
Let $\beta >0$ and $H$ be a $3$-graph of order~$n$.
Let $X, Y$ be a partition of $V(H)$.
Suppose that there are at most $\beta \binom{|X|}2 \binom{|Y|}2$ copies of $K_4^-$ with two vertices in $X$ and two vertices in $Y$. 
Then 
\[
(|Y| - 1)e(XXY) + (|X|-1)e(XYY) \le 2(1+ \beta)\binom{|X|}2 \binom{|Y|}2.
\]
\end{proposition}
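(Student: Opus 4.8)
The plan is a double-counting argument over all pairs consisting of a $2$-subset of $X$ and a $2$-subset of $Y$. The key observation to record first is: for $\{x,x'\}\in\binom{X}{2}$ and $\{y,y'\}\in\binom{Y}{2}$, the only triples contained in $\{x,x',y,y'\}$ are $xx'y$, $xx'y'$, $xyy'$ and $x'yy'$, and $\{x,x',y,y'\}$ spans a copy of $K_4^-$ in $H$ if and only if at least three of these four triples lie in $E(H)$. Indeed, any three of the four triples on a $4$-set form a copy of $K_4^-$, all four of them form a $K_4^3$ which has $K_4^-$ as a spanning subgraph, while two or fewer edges cannot accommodate the three edges of $K_4^-$.

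Next I would define, for each pair $P=(\{x,x'\},\{y,y'\})$ with $\{x,x'\}\in\binom{X}{2}$ and $\{y,y'\}\in\binom{Y}{2}$, the integer $a(P)\in\{0,1,2,3,4\}$ equal to the number of the four triples listed above that are edges of $H$. Summing over all such $P$ counts incidences between these pairs and edges of $H$ meeting both $X$ and $Y$: an $XXY$-edge $xx'y$ is counted once for each $y'\in Y\setminus\{y\}$, hence $|Y|-1$ times, and an $XYY$-edge $xyy'$ is counted once for each $x'\in X\setminus\{x\}$, hence $|X|-1$ times. Therefore
\[
\sum_{P} a(P) = (|Y|-1)\,e(XXY) + (|X|-1)\,e(XYY),
\]
which is precisely the left-hand side of the desired inequality.

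Finally I would bound this sum from above by splitting on the value of $a(P)$. Write $N:=\binom{|X|}{2}\binom{|Y|}{2}$ for the total number of pairs $P$, and let $K$ be the number of $P$ with $a(P)\ge 3$. By the opening observation $K$ is exactly the number of copies of $K_4^-$ with two vertices in $X$ and two vertices in $Y$, since each such copy has a unique $X$-part and $Y$-part; hence the hypothesis gives $K\le\beta N$. Using $a(P)\le 2$ on the pairs with $a(P)\le 2$ and $a(P)\le 4$ on the remaining $K$ pairs,
\[
\sum_{P} a(P) \le 2(N-K) + 4K = 2N + 2K \le 2(1+\beta)N,
\]
which is the claimed bound.

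I do not anticipate a genuine obstacle: the argument is entirely elementary. The only points that require a little care are getting the multiplicities $|Y|-1$ and $|X|-1$ right in the incidence count, and applying the paper's convention that copies of $K_4^-$ are counted as spanning $4$-subsets rather than labelled subgraphs, so that the quantity $K$ above really does match the one bounded in the hypothesis.
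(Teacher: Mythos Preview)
Your proof is correct and takes essentially the same approach as the paper: the paper also sums $e(S)$ over all $4$-sets $S$ with $|S\cap X|=|S\cap Y|=2$, identifies this sum as $(|Y|-1)e(XXY)+(|X|-1)e(XYY)$, and then bounds it by $2\binom{|X|}{2}\binom{|Y|}{2}+2\beta\binom{|X|}{2}\binom{|Y|}{2}$ using that at most $\beta\binom{|X|}{2}\binom{|Y|}{2}$ of the sets $S$ satisfy $e(S)\ge 3$. Your version is simply more explicit about the incidence counting and the case split.
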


\begin{proof}
Observe that $\sum e(S)=(|Y| - 1)e(XXY) + (|X|-1)e(XYY)$, where the sum is over all 4-sets $S$ such that $|S\cap X|=|S\cap Y|=2$. 
Our assumption is that there are at most $\beta \binom{|X|}2 \binom{|Y|}2$ 4-sets $S$ such that $|S\cap X|=|S\cap Y|$ and $e(S)\ge 3$. So we have  $\sum e(S) \le 2\binom{|X|}2 \binom{|Y|}2 + 2\beta \binom{|X|}2 \binom{|Y|}2= 2(1+ \beta)\binom{|X|}2 \binom{|Y|}2$.
\end{proof}

\section{Almost $K_4^-$-tiling}\label{secalmost}

In this section, we prove Lemma~\ref{lem:almost} which implies that any $3$-graph $H$ with minimum codegree slightly less than that in Theorem~\ref{mainthm} must contain an almost perfect $K^- _4$-tiling.

The key tool in the proof of Lemma~\ref{lem:almost} is a result of Keevash and Mycroft~\cite{mycroft} on almost perfect matchings in hypergraphs. Before we can state this result, 
we need the following terminology. 
For an integer~$k$, a \emph{$k$-system} is a hypergraph $J$ in which every edge of $J$ has size at most $k$ and $\emptyset \in E(J)$. We call an edge of size $s$ in $J$ an \emph{$s$-edge}.
Let $J_s$ be the $s$-graph on $V(J)$ induced by all $s$-edges of~$J$.
The \emph{minimum $r$-degree} of $J$, denoted by $ \hat{\delta}_r(J)$, is the minimum $\deg_{J_{r+1}}(e)$ among all $e\in E(J_r)$. (Note that this is different from $\d_r(J_{r+1})$, which is the minimum $\deg_{J_{r+1}}(S)$ among all $r$-sets $S\subseteq V(J)$.)
The \emph{degree sequence of~$J$} is $\hat{\delta}(J) = ( \hat{\delta}_0(J) , \hat{\delta}_1(J), \dots, \hat{\delta}_{k-1}(J))$. Given $a_0, a_1, \dots, a_{k-1} \geq 0$ we write $\hat{\delta}(J) \geq (a_0, a_1, \dots , a_{k-1})$ to mean that
$\hat{\delta}_i(J) \geq a_i$ for all $i$.

We will apply the following special case of Lemma~7.6 from~\cite{mycroft}.

\begin{lemma} \label{lem:keevash}
Suppose that $1/n \ll \phi \ll \r \ll \beta, 1/k$.
Let~$V$ be a set of size $n$.
Suppose that $J$ is a $k$-system on $V$ such that 
\begin{enumerate}
	\item[\rm (i)] $\hat{\delta} (J) \ge ( n, (\frac{k-1}{k} - \r )  n , (\frac{k-2}{k} - \r ) n , \dots,  (\frac{1}{k} - \r ) n)$ and
	\item[\rm (ii)] for any $p \in [k-1]$ and set $S \subseteq V$ with $S = \lfloor pn/k \rfloor$, we have $e(J_{p+1}[S] )\ge \beta n^{p+1}$.
\end{enumerate}
Then $J_k$ contains a matching $M$ which covers all but at most $\phi n $ vertices of $J$.
\end{lemma}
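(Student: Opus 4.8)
Suppose for contradiction that every matching in $J_k$ leaves more than $\phi n$ vertices uncovered; I will deduce that $J$ admits a \emph{space barrier}, contradicting~(ii). This is the route taken by Keevash and Mycroft. Fix a maximum matching $M$ in $J_k$ and put $U := V \setminus V(M)$, so that $|U| > \phi n$; maximality of $M$ forces $U$ to span no $k$-edge of $J$.

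The first step is to rotate $M$ into a rigid position by local switchings. The basic move is: given $f \in M$ and $u \in U$, if $(f \setminus \{v\}) \cup \{u\} \in E(J_k)$ for some $v \in f$, swap $f$ for this edge; then $M$ stays maximum, $|U|$ is unchanged, and $v$ has joined the uncovered set. Allowing several vertices of an $M$-edge to be exchanged at once and iterating, one reaches a maximum matching $M$ whose uncovered set $U$ (still of size $> \phi n$) admits no \emph{augmenting configuration}: informally, for no $j \in [k]$ does some $f \in M$ have $j$ of its vertices simultaneously exchangeable, through edges of $J$, with $j$ vertices of $U$. It is here that the \emph{entire} degree sequence is used, not merely the codegree $\hat\delta_{k-1}(J)$: the exchangeability conditions are set up and propagated through the lower levels $J_2, \dots, J_k$ of the system, each level invoking the corresponding bound ($\hat\delta_0(J) = n$ and $\hat\delta_r(J) \ge (\tfrac{k-r}{k} - \r)n$ for $1 \le r \le k-1$).

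From the absence of augmenting configurations one then extracts an index $p \in [k-1]$ together with a set $S \subseteq V$ built from $U$ and the ``frozen'' vertices of the edges of $M$ (those not exchangeable with any vertex of $U$) such that $J_{p+1}$ contains almost no edge inside $S$ — any such edge would combine with $M$ to produce an augmentation. A double count using~(i) then pins down $|S|$: the bounds $\hat\delta_r(J) \ge (\tfrac{k-r}{k} - \r)n$ force the levels $J_1, \dots, J_p$ in and around $S$ to be nearly as dense as in the extremal space-barrier configuration, so that $|S| = \lfloor pn/k\rfloor$ up to an error of at most $\r n$, which we correct by adding or deleting $\r n$ vertices. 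The resulting set of size exactly $\lfloor pn/k\rfloor$ then spans fewer than $\beta n^{p+1}$ edges of $J_{p+1}$ — since $\phi \ll \r \ll \beta$, all error terms are negligible here — contradicting~(ii).

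The heart of the proof, and the step I expect to be the main obstacle, is the middle one: isolating the correct notion of augmenting configuration and showing that a maximum matching can always be switched into a configuration-free position while keeping the uncovered set of linear size. This is a multi-level hypergraph analogue of Berge's augmenting-path lemma, and the delicate point is the bookkeeping across the $k$ levels — ensuring that switchings performed at one level do not destroy the gains of earlier ones, and that the degree-sequence hypotheses survive restriction to the set $S$. By contrast, the individual switching moves and the concluding double count that fixes $|S|$ are routine consequences of~(i).
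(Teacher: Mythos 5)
The paper does not prove this lemma: it is stated verbatim as a black box, cited as ``a special case of Lemma~7.6 from~\cite{mycroft}''. So there is no internal proof to compare against, and the appropriate standard is whether your sketch would actually establish the statement.

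Your proposal identifies the right dichotomy — either an almost-perfect matching exists, or the $k$-system contains a \emph{space barrier}, i.e.\ a set $S$ of size roughly $\lfloor pn/k\rfloor$ spanning few $(p+1)$-edges — and this is indeed the conceptual content of Keevash--Mycroft's Lemma~7.6. But the argument you outline is not the one they give, and as written it does not close. The Berge-style switching/augmenting-configuration scheme is your own invention here, and you concede yourself that the central claim (``a maximum matching can always be switched into a configuration-free position while keeping the uncovered set of linear size'') is ``the main obstacle'' and ``the delicate point.'' That is precisely the step one would need to prove; asserting that a ``multi-level hypergraph analogue of Berge's augmenting-path lemma'' works is not an argument. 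Nothing in your proposal shows that the iterated local swaps terminate, that they do not cycle, that the uncovered set is preserved at linear size, or how one passes from ``no augmenting configuration'' to a \emph{concrete} set $S$ of size within $\gamma n$ of $\lfloor pn/k\rfloor$; the degree-sequence hypothesis is invoked at exactly the points where you say a ``double count'' does the work, but no counting is performed.

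In fact the real proof in Keevash--Mycroft is of a quite different character: it goes through their hypergraph regularity framework, ``tidy'' partition complexes, and a transferral/robust-universality machinery built over the course of their monograph. It is not a local switching argument. If you want a self-contained proof of this lemma you would essentially have to reproduce a significant portion of their Sections~5--7, which is well beyond the scope of a paper that, quite reasonably, treats this lemma as imported. As it stands, your text is a plausible-sounding narration of what a proof might look like, not a proof; it would not be acceptable as a replacement for the citation.
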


We now prove Lemma~\ref{lem:almost} by defining a $4$-system $J$ such that 
\begin{align}\label{eq:Js}
E(J_0) = \{ \emptyset \}, E(J_1) =  V(H), E(J_2) = \binom{V(H)}{2} , E(J_3) = E(H), \text{ and } E(J_4) = \mathcal{K}_{4}^-(H),
\end{align}
where $\mathcal{K}_{4}^-(H)$ denotes the set of $4$-tuples in $V(H)$ that span a copy of $K_4^-$ in~$H$.
If $J$ satisfies the hypothesis of Lemma~\ref{lem:keevash}, then we are done.
Otherwise, we deduce some structural properties of~$H$, update $J$ appropriately and apply Lemma~\ref{lem:keevash} again.

\begin{proof}[Proof of Lemma~\ref{lem:almost}]
Define $1/n \ll \phi \ll \r \ll \beta  \ll c'$ where $c'$ is the constant  from Proposition~\ref{prop:0.3}. Let $H$ be as in the statement of the lemma and $V := V(H)$.
Define a $4$-system~$J$ as in \eqref{eq:Js}.
By Proposition~\ref{prop:le}, $\hat{\delta}_3(J) \ge (1/4- 3 \r /2) n $.
Hence $\hat{\delta}(J) \ge ( n , n-1, (1/2 - \r) n, (1/4 - 3\r/2) n)$.
If Lemma~\ref{lem:keevash}(ii) holds, then $J_4$ contains a matching~$M$ which covers all but $\phi n$ vertices of~$J$ thereby proving the lemma. 

Thus we may assume that there exist some $p \in [3]$ and a set $S \subseteq V$ with $S = \lfloor pn/4 \rfloor$ where $e(J_{p+1}[S]) < \beta n^{p+1}$. We note that $p \ne 1, 3$. Indeed, for any $S$ of size $\lfloor n/4\rfloor$, we have $e(J_2[S])= \binom{|S|}2> \beta n^2$. Now consider  any $S \subseteq V$ such that $|S|  = \lfloor 3 n / 4 \rfloor$.
Note that $\delta_2( H [ S ] ) \ge  \delta_2(H) - \lceil n/4 \rceil \ge (1/4 - 2\r ) n \gg 0.3 |S|$.
Proposition~\ref{prop:0.3} therefore implies that $e(J_{4}[S]) \ge c' |S|^4  \ge \beta  n^4$.
So we must have that $p = 2$.

Let $S \subseteq V$ be such that $|S| = \lfloor n /2 \rfloor$ and $e(J_{3}[S]) = e(H[S]) < \beta n^3$. In general, for any 
set $U\subseteq V$ with $|U| = \lfloor n /2 \rfloor$ and $e(H[S]) < \beta n^3$, we call a pair $xy \in \binom{U}{2}$ \emph{$U$-good} if $\deg_{ H [U] } (xy) \le 3 \beta^{1/2} n $ (and so $\deg_{H} (xy, V \setminus U) \ge ( 1/2 - \r - 3 \beta^{1/2} ) n $).
Then at most $\beta^{1/2} n^2$ pairs $xy \in \binom{U}{2}$ are \emph{not} $U$-good.
We call a triple $xyz \in \binom{U}{3}$ \emph{$U$-good} if every pair in $xyz$ is $U$-good.  Note that
\begin{enumerate}
\item[(a)] for any $W\subseteq U$, at least $\binom{|W|}{3} - \beta^{1/2} n^2 (|W|-2)$ triples of $W$ are $U$-good;
\item[(b)] for any $U$-good triple $T=xyz$, $|L(T)| \ge	| \left( N_H( xy ) \cap N_H( xz ) \cap N_H( yz ) \right) \cap (V \setminus  U) | \ge (1/2 - 10 \beta ^{1/2} ) n$.
\end{enumerate}

Define a $4$-system~$J'$ obtained from~$J$ by adding all $S$-good triples that are not edges of $H$ to the edge set.
Because of (b), we have $\deg_{J'}(T)\ge (1/2 - 10 \beta ^{1/2} ) n\ge (1/4 - 3\gamma /2) n$. Thus $\hat{\delta}(J') \ge ( n , n-1, (1/2 - \r) n, (1/4 - 3\gamma /2) n)$.
Since $J \subseteq J'$,  $J'$ satisfies Lemma~\ref{lem:keevash}(ii) for $p = 1, 3$.
If $J'$ also satisfies Lemma~\ref{lem:keevash}(ii) for $p=2$, then Lemma~\ref{lem:keevash} gives a matching in $J'_4= J_4$ which covers all but $\phi n$ vertices, proving the lemma.
Otherwise, there exists $S' \subseteq V$  such that $|S'| = \lfloor n /2 \rfloor$ and $e(J_{3}'[S']) < \beta  n^3$.
We claim that $|S' \cap S| \le 3 \beta  ^{1/4} n$ -- otherwise by (a), the number of $S$-good triples in $S'\cap S$ is at least
\[
\binom{3 \beta ^{1/4}  n}3 - \beta^{1/2} n^2\cdot (3\beta ^{1/4}n-2)  > \beta   n^3,
\]
implying that $e(J_{3}'[S']) > \beta  n^3$, a contradiction.

Define a $4$-system~$J^*$ obtained from $J'$ by adding all $S'$-good triples that are not in $J'$.
Once again, we have $\hat{\delta}(J^*) \ge ( n , n-1, (1/2 - \r) n, (1/4 - 3\gamma /2) n)$ by (b).
Since $J \subseteq J^*$,  $J^*$ satisfies Lemma~\ref{lem:keevash}(ii) for $p = 1, 3$.
Consider a set $S^*\subseteq V$ with $|S^*| = \lfloor n /2 \rfloor$. 
As $|S| = |S'| = \lfloor n /2 \rfloor$ and $|S' \cap S| \leq 3\beta  ^{1/4} n$, we have $|S'\cup S|\ge n - 3\beta^{1/4} n -1$. 
Thus $S^*$ contains at least $n/6$ vertices from $S$ or at least $n/6$ vertices from $S'\setminus S$. In either case, since $J^*$ contains all $S$-good and $S'$-good triples, we have $e(J^*_3[S^*])\ge \binom{n/6}{3} - \beta^{1/2} n^2 \cdot n/6> \beta n^3$.
So $J^*$ satisfies Lemma~\ref{lem:keevash}(ii) for $p = 2$.
Therefore $J_4 = J^*_4$ contains a matching~$M$ which covers all but $\phi n$ vertices, proving the lemma.
\end{proof}

\section{The absorbing lemma}\label{secabs}
In this section
we prove Lemma~\ref{lem:abs} which is an absorbing result for the case when $H$ is not $3\gamma$-extremal.
For this, we need the following terminology.
Let $H$ be a $3$-graph of order~$n$.
Given an integer $c\ge 1$ and vertices $x, y\in V(H)$, we say that the vertex set $S\subseteq V(H)$ is an \emph{$(x,y)$-connector of length~$c$} if $S\cap \{x, y\}=\emptyset$, $|S|=4c-1$ and both $H[S\cup x]$ and $H[S\cup y]$ contain $K_4^-$-factors.
Given an integer $c\ge 1$ and a constant $\eta>0$, two vertices $x,y \in V(H)$ are \emph{$(c,\eta)$-close} to each other if there exist at least $\eta n^{4c-1}$ $(x, y)$-connectors of length~$c$ in $H$.
For $x \in V(H)$, we denote by $\tilde{N}_{c,\eta}(x)$ the set of vertices $y$ in $H$ that are $(c,\eta)$-close to~$x$.
A subset $U\subseteq V(H)$ is said to be \emph{$(c,\eta)$-closed in $H$} if any two vertices in $U$ are $(c, \eta)$-close to each other. If $V(H)$ is $(c,\eta)$-closed in $H$ then we simply say that \emph{$H$ is $(c,\eta)$-closed}.

Given $X \subseteq V(H)$,  $X$ being $(c,\eta)$-closed in $H$  is \emph{not} the same notion as $H[X]$ being  $(c,\eta)$-closed. Indeed, the former implies
that between any $x,y \in X$ there are at least $\eta n^{4c-1}$ $(x, y)$-connectors of length~$c$ in $H$. On the other hand, the latter implies
that between any $x,y \in X$ there are at least $\eta |X|^{4c-1}$ $(x, y)$-connectors of length~$c$ in $H[X]$.

Given an integer $c \ge 1$ and $X, Y\subseteq V(H)$, a triple $(x, y , S)$ is an \emph{$(X, Y)$-bridge of length $c$} if $x\in X$, $y\in Y$ and $S$ is an $(x, y)$-connector of length~$c$.

We will apply the following two results from~\cite{LM1}. The first, a special case of
Lemma~1.1 from~\cite{LM1}, states that if $H$ itself is $(c,\eta)$-closed then
$H$ contains a small absorbing set.

\begin{lemma}\cite{LM1} \label{lem:abs3}
Let $0 < 1/ n  \ll \phi \ll \eps \ll \eta , 1/c$.
Let $H$ be a $3$-graph of order~$n$.
Suppose  that $H$ is $(c,\eta)$-closed.
Then there exists an absorbing set $W\subseteq V(H)$ of order at most $\eps n$ such that $|W| \in 4\mathbb{N}$ and for any $U \subseteq V(H) \setminus W$ such that $|U|\le \phi n$ and $|U|\in 4\mathbb{N}$, $H[W]$ and $H[U\cup W]$ have $K_4^-$-factors.
\end{lemma}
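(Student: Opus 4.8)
The plan is to run the usual absorbing-set argument, with connectors of length $c$ as the basic building blocks. Fix $m := 16c$ (divisible by $4$), and for a $4$-set $T\subseteq V(H)$ call an $m$-set $A$ disjoint from $T$ a \emph{$T$-absorber} if both $H[A]$ and $H[A\cup T]$ contain $K_4^-$-factors; note $|A|=16c$ and $|A\cup T|=4(4c+1)$ both lie in $4\mathbb N$. The first step is to show that every $4$-set $T=\{t_1,t_2,t_3,t_4\}$ has at least $\alpha n^m$ distinct $T$-absorbers for some $\alpha=\alpha(\eta,c)>0$. To build one, I would choose a $4$-set $\{a_1,a_2,a_3,a_4\}$, disjoint from $T$, spanning a copy of $K_4^-$, and then for $i=1,2,3,4$ in turn choose an $(t_i,a_i)$-connector $S_i$ of length $c$ avoiding $T$, $\{a_1,\dots,a_4\}$ and $S_1,\dots,S_{i-1}$. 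Setting $A:=\{a_1,\dots,a_4\}\cup S_1\cup S_2\cup S_3\cup S_4$ gives $|A|=4+4(4c-1)=m$; the four disjoint $4c$-sets $S_i\cup\{a_i\}$ partition $A$ and each spans a $K_4^-$-factor (by the definition of a connector), so $H[A]$ has a $K_4^-$-factor, while the four disjoint sets $S_i\cup\{t_i\}$ together with the copy of $K_4^-$ on $\{a_1,\dots,a_4\}$ give a $K_4^-$-factor of $H[A\cup T]$. Since $H$ is $(c,\eta)$-closed, every pair $t_ia_i$ (with $a_i\notin T$) is $(c,\eta)$-close, so at step $i$ there are at least $\eta n^{4c-1}$ connectors available, at least $\eta n^{4c-1}/2$ of which avoid the finitely many previously chosen vertices once $n$ is large; moreover closedness alone forces $\Omega(n^4)$ copies of $K_4^-$ in $H$ (fix distinct $u,v$: each of the $\ge\eta n^{4c-1}$ $(u,v)$-connectors exhibits a copy of $K_4^-$ through $v$, and a given such copy is exhibited by at most $n^{4c-4}$ of them, so $v$ lies in $\ge\eta n^3$ copies). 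Multiplying these counts and dividing by the bounded number of ways a fixed $A$ decomposes as above yields the claimed $\alpha n^m$.

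For the second step I would form a random family $\mathcal F$ by including each $m$-subset of $V(H)$ independently with probability $p:=\delta n^{1-m}$, for a small $\delta=\delta(\eps,\eta,c)>0$. Then $\mathbb E|\mathcal F|\le\delta n/m!$; the expected number of intersecting pairs of members of $\mathcal F$ is at most $p^2n^{2m-1}=\delta^2n$; and for each of the at most $n^4$ choices of $T$, the number of $T$-absorbers in $\mathcal F$ has expectation at least $p\alpha n^m=\delta\alpha n$ and, being a sum of independent indicators, is at least $\delta\alpha n/2$ with probability $1-e^{-\Omega(n)}$ by a Chernoff bound. Hence, by Markov and a union bound over $T$, with positive probability $\mathcal F$ has at most $2\delta n/m!$ members, at most $2\delta^2n$ intersecting pairs, and at least $\delta\alpha n/2$ $T$-absorbers for every $4$-set $T$. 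Deleting one member from each intersecting pair, and discarding any remaining $A$ for which $H[A]$ has no $K_4^-$-factor (which removes no $T$-absorbers), leaves a family $\mathcal F'$ of pairwise disjoint $m$-sets, each with a $K_4^-$-factor, still having at least $\delta\alpha n/2-2\delta^2n\ge\delta\alpha n/4$ members that are $T$-absorbers, for every $T$ (taking $\delta$ small).

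Finally I would put $W:=\bigcup_{A\in\mathcal F'}A$. Then $|W|=m|\mathcal F'|\le 2m\delta n/m!\le\eps n$ once $\delta$ is small in terms of $\eps$; $|W|\in4\mathbb N$ as $4\mid m$; and $H[W]$ has a $K_4^-$-factor, being a disjoint union of the $H[A]$, each of which does. To check the absorbing property, take $U\subseteq V(H)\setminus W$ with $|U|\le\phi n$ and $|U|\in4\mathbb N$, and partition $U$ into $k\le\phi n/4$ quadruples $T_1,\dots,T_k$. As $\phi$ is small enough (which the hierarchy permits) that $\phi n/4<\delta\alpha n/4$, one can greedily pick distinct $A_1,\dots,A_k\in\mathcal F'$ with $A_j$ a $T_j$-absorber; combining the $K_4^-$-factors of $H[A_j\cup T_j]$ with the $K_4^-$-factors of $H[A]$ for the unused $A\in\mathcal F'$ gives a $K_4^-$-factor of $H[U\cup W]$, as desired.

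I expect the main obstacle to be the first step: arranging the absorber so that \emph{both} $H[A]$ and $H[A\cup T]$ decompose into copies of $K_4^-$, which forces the connectors and the auxiliary $K_4^-$-copy to be pairwise disjoint and the sizes to agree mod $4$, and verifying that $(c,\eta)$-closedness by itself --- without any minimum-degree or density input --- already produces the $\Omega(n^4)$ copies of $K_4^-$ that the construction needs to get started. The probabilistic selection and the concluding greedy argument are routine.
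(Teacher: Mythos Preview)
The paper does not prove this lemma; it is quoted as a special case of Lemma~1.1 in~\cite{LM1}. Your argument is a correct reconstruction of the standard absorbing-set proof of Lo and Markstr\"om, specialised to $K_4^-$: build a $T$-absorber for each $4$-set $T$ by taking a central copy of $K_4^-$ on $\{a_1,\dots,a_4\}$ together with four pairwise disjoint $(t_i,a_i)$-connectors of length~$c$, count such absorbers, select a random family of $m$-sets, clean up intersections, and absorb $U$ greedily in $4$-tuples. Your observation that $(c,\eta)$-closedness by itself already forces $\Omega(n^4)$ copies of $K_4^-$ --- since every $(u,v)$-connector witnesses a copy of $K_4^-$ through $v$, and each copy is witnessed by at most $n^{4c-4}$ connectors --- is precisely the device that lets the construction start without any minimum-degree input, and mirrors the argument in~\cite{LM1}. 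So your approach coincides with the cited proof.
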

The next result is a special case of Proposition~2.1 from~\cite{LM1}.
\begin{proposition}\cite{LM1}\label{prop21}
Let $0 < 1/n \ll \eta' \ll  \eta , \eps ,1/c$ with $c \in \mathbb{N}$.
Suppose $H$ is a $3$-graph of order $n$ and there exists a vertex $x\in V(H)$ with $|\tilde{N}_{c,\eta}(x)|\ge \e n$.
Then $\tilde{N}_{c, \eta}(x) \subseteq \tilde{N}_{c+1, \eta'}(x)$.
\end{proposition}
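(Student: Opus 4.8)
The plan is to reduce the whole statement to one counting fact — that $H$ contains $\Omega(n^4)$ copies of $K_4^-$ none of which uses the vertex $x$ — and then to produce $(x,y)$-connectors of length $c+1$ by gluing such a copy of $K_4^-$ onto an $(x,y)$-connector of length $c$. The counting fact is where essentially all the work lies, and it is the only place the hypothesis $|\tilde N_{c,\eta}(x)|\ge\eps n$ enters.

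To prove the counting fact I would argue as follows. For each $w\in\tilde N_{c,\eta}(x)$ there are at least $\eta n^{4c-1}$ $(x,w)$-connectors $S$ of length $c$; for each such $S$ fix a $K_4^-$-factor of $H[S\cup w]$. Each of its $c$ blocks is a $4$-set spanning a copy of $K_4^-$ in $H$ that is contained in $S\cup w$, hence a copy of $K_4^-$ avoiding $x$ (because $x\notin S$ and $x\ne w$ by definition of a connector). This produces at least $c\cdot|\tilde N_{c,\eta}(x)|\cdot\eta n^{4c-1}\ge c\eps\eta\, n^{4c}$ triples $(w,S,B)$ with $B$ a block of the chosen factor. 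I would then bound the multiplicity of each $4$-set $B$: it occurs in at most $O(n^{4c-4})$ such triples, since either $w$ is one of the four vertices of $B$ and $S$ must contain the remaining three (leaving $4c-4$ free vertices of $S$), or $w\notin B\subseteq S$ (leaving $4c-5$ free vertices of $S$ together with at most $n$ choices of $w$). Dividing gives at least $\mu n^4$ copies of $K_4^-$ avoiding $x$, for some $\mu=\mu(\eps,\eta,c)>0$.

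With the counting fact in hand, fix an arbitrary $y\in\tilde N_{c,\eta}(x)$ and take its $\ge\eta n^{4c-1}$ $(x,y)$-connectors $S$ of length $c$. For each such $S$, all but $O(n^3)$ of the $\ge\mu n^4$ copies of $K_4^-$ above are disjoint from the $(4c+1)$-set $S\cup\{x,y\}$; for any such copy $Q$ put $S^+:=S\cup Q$. Then $|S^+|=4c+3$ and $S^+\cap\{x,y\}=\emptyset$, and appending the block $Q$ to a $K_4^-$-factor of $H[S\cup x]$ (respectively of $H[S\cup y]$) gives a $K_4^-$-factor of $H[S^+\cup x]$ (respectively of $H[S^+\cup y]$), so $S^+$ is an $(x,y)$-connector of length $c+1$. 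The number of pairs $(S,Q)$ arising this way is at least $\eta n^{4c-1}\cdot\mu n^4/2$, and a given $S^+$ is produced by at most $\binom{4c+3}{4}$ pairs, so there are at least $\eta'n^{4c+3}$ $(x,y)$-connectors of length $c+1$ provided $\eta'$ is chosen small in terms of $\eta,\eps,c$ — which is exactly what the hierarchy $\eta'\ll\eta,\eps,1/c$ permits. Hence $y\in\tilde N_{c+1,\eta'}(x)$, and as $y$ was arbitrary the proposition follows.

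The step I expect to be the main obstacle is the counting fact, and in particular seeing why one genuinely needs the whole set $\tilde N_{c,\eta}(x)$ rather than a single close vertex. Running the argument with one vertex $y$ produces only $\Omega(n^{4c-1})$ triples, which for $c=1$ amounts to a mere $\Omega(n^3)$ copies of $K_4^-$, all of them through $y$ — useless, since in the gluing step a copy meeting $\{x,y\}$ cannot be used. Summing over the $\ge\eps n$ choices of $w$ supplies precisely the missing factor of $n$, yielding $\Omega(n^4)$ copies almost all of which avoid any prescribed bounded vertex set. The remaining care is in pinning down the $O(n^{4c-4})$ multiplicity bound and in checking the routine but crucial fact that a $K_4^-$-factor remains a $K_4^-$-factor after adjoining a vertex-disjoint copy of $K_4^-$.
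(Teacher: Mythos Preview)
The paper does not include a proof of this proposition; it simply cites it as a special case of Proposition~2.1 in~\cite{LM1}. Your argument is correct: the key step is precisely that the hypothesis $|\tilde N_{c,\eta}(x)|\ge\eps n$ forces $\Omega(n^4)$ copies of $K_4^-$ avoiding~$x$, after which the gluing is straightforward. Your double-count of triples $(w,S,B)$ with the $O(n^{4c-4})$ multiplicity bound is clean and handles the case $c=1$ correctly (where the second case $w\notin B$ is vacuous since $|S|=3<4$). One cosmetic remark: you should note explicitly that $x\notin\tilde N_{c,\eta}(x)$ (closeness is defined for distinct vertices), so that $w\ne x$ is automatic. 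Otherwise the proof is complete and matches the standard approach used in~\cite{LM1}.
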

The proof of the next simple result is similar to that of
Lemma~2.2 from~\cite{LM1} (so we omit it). It states that if one has two disjoint `closed' sets $X$ and $Y$ in $H$ and $H$ contains many $(X,Y)$-bridges, then in fact $X\cup Y$ is closed.
\begin{lemma} \label{lem:bridge}
Let $0 < 1/n \ll \eta' \ll  \eta , \eps ,1/c,1/p$ with $c,p \in \mathbb{N}$.
Let $H$ be a $3$-graph of order~$n$.
Let $X,Y\subseteq V(H)$ be disjoint such that both $X$ and $Y$ are $(c, \eta)$-closed in $H$.
Suppose further there exist at least $\eps n^{4p+1}$ $(X, Y)$-bridges of length~$p$.
Then $X\cup Y$ is $(2c+p,\eta')$-closed in $H$.
\end{lemma}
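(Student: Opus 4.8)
The plan is to combine the two hypotheses (both $X$ and $Y$ are $(c,\eta)$-closed, and there are many $(X,Y)$-bridges of length $p$) by a counting/concatenation argument. Fix two vertices $u,v \in X \cup Y$; we must produce at least $\eta' n^{4(2c+p)-1}$ $(u,v)$-connectors of length $2c+p$. The key observation is that a connector of length $2c+p$ can be built by gluing together three pieces: a $(c,\eta)$-connector from $u$ to some $x \in X$, an $(x,y)$-connector $S$ of length $p$ coming from an $(X,Y)$-bridge $(x,y,S)$, and a $(c,\eta)$-connector from $y$ to $v$ — provided all the vertex sets involved are disjoint. (When $u$ and $v$ already lie in the same class one instead uses two bridges, or more simply notes that $X$ and $Y$ are each already $(c,\eta)$-closed and applies Proposition~\ref{prop21} a bounded number of times to boost the length to $2c+p$; alternatively route through one vertex of the other class via a single bridge. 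The cleanest uniform treatment: since there are $\ge \eps n^{4p+1}$ bridges, the bipartite "bridge graph" has many edges, so every vertex of $X$ is a bridge-endpoint to a linear-sized set of vertices of $Y$ and vice versa, and one can always find an intermediate hop.)

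First I would set up the hierarchy: introduce auxiliary constants $\eta' \ll \eta'' \ll \eta,\eps,1/c,1/p$, so that all the "avoid a bounded number of previously-chosen vertices" losses are absorbed. Next, for the main case $u \in X$, $v \in Y$: by the bridge hypothesis and averaging, there is a set $X_0 \subseteq X$ with $|X_0| \ge \eps n /2$ (say) such that each $x \in X_0$ is the $X$-endpoint of at least $\eps n^{4p}/2$ bridges of length $p$; for each such $x$, the set $Y_x$ of $Y$-endpoints $y$ reachable from $x$ by a length-$p$ bridge has size $\ge \eps n /2$ as well (again by averaging on the bridges out of $x$). Then count: choose $x \in X_0$; choose a $(u,x)$-connector of length $c$ avoiding $v$ — there are $\ge \eta n^{4c-1} - O(n^{4c-2}) \ge \eta n^{4c-1}/2$ of them; choose $y \in Y_x$ avoiding the $\le 4c-1$ vertices used so far; choose a bridge set $S$ from $x$ to $y$ of length $p$ avoiding the vertices used so far — at least $\eps n^{4p-1}/4$ choices since only a bounded number are forbidden and $|S| = 4p-1$ each; finally choose a $(y,v)$-connector of length $c$ avoiding everything chosen — again $\ge \eta n^{4c-1}/2$ choices. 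Concatenating, $\{x\} \cup (\text{first connector}) \cup S \cup \{y\} \cup (\text{last connector})$ is a set of size $(4c-1) + 1 + (4p-1) + 1 + (4c-1) = 8c + 4p - 1 = 4(2c+p)-1$, and it is an $(u,v)$-connector of length $2c+p$ because concatenating $K_4^-$-factors on overlapping-at-endpoints blocks yields a $K_4^-$-factor. Multiplying the counts gives $\ge (\eta/2)^2 (\eps/4)(\eps n/2) n^{8c+4p-3} \gg \eta' n^{4(2c+p)-1}$, with the overcount from ordering being a bounded factor. This shows $u,v$ are $(2c+p,\eta')$-close.

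The remaining case is $u,v$ both in $X$ (symmetrically both in $Y$). Here I would pick any $x \in X_0$ and any $y \in Y_x$ from the above, route $u \to x$ by a length-$c$ connector ($\eta n^{4c-1}/2$ choices), $x \to y$ by a length-$p$ bridge, then $y \to v$ — but $v \in X$, so I need $y$ and $v$ to be $(c,\eta)$-close, which fails in general. Instead: use that $X$ is $(c,\eta)$-closed directly to get a length-$c$ connector $u \to v$, then pad it to length $2c+p$ by absorbing $p$ extra disjoint copies of $K_4^-$ — but those extra copies need not exist. The correct fix is to use \emph{two} bridges and the closedness of \emph{both} classes: $u \to x_1$ (length $c$, in $X$), bridge $x_1 \to y_1$ (length $p$), $y_1 \to y_2$ (length... ) — this overshoots. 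The genuinely clean argument, mirroring Lemma~2.2 of \cite{LM1}: note $2c + p = c + (c+p)$ is not needed; rather, observe that since $X$ is $(c,\eta)$-closed and there is at least one bridge, pick one bridge $(x^*, y^*, S^*)$; then for $u,v \in X$ both $(c,\eta)$-close to $x^*$, so by the three-piece gluing above (with the "bridge" replaced by a connector $x^* \to x^*$... no). I would ultimately follow the proof of \cite[Lemma~2.2]{LM1} verbatim in structure: it handles exactly "two closed sets plus many bridges $\Rightarrow$ union closed" and the within-class case is dispatched there by going $u \to $ (bridge endpoint) $\to$ (other class) $\to$ (bridge endpoint) $\to v$ using two bridges, giving length $c + p + p + c$ — but our target is $2c+p$, so actually a single bridge suffices with lengths $c, p, c$ provided we allow the middle vertex to be in either class, which it is for $u,v \in X$: take $u \to x$ ($c$), $x \to y \in Y$ (bridge, $p$), $y \to v$ — and $v \in X$ is \emph{not} close to $y$.

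The honest resolution, and what I expect the main obstacle to be, is precisely this within-one-class case; the cleanest route is: since every vertex of $X$ (via $X_0$, which can be taken to be all but $o(n)$ of $X$ after a cleaning step, or one reduces to it) is close to $\ge \eps n/2$ vertices of $Y$, and every vertex of $Y$ likewise reaches $\ge \eps n /2$ vertices of $X$, for $u,v \in X$ choose $y \in Y$ reachable from $u$ and then $v' \in X$ reachable from $y$; but we want to reach $v$, not $v'$. So we instead go $u \to y$ (length $c$ connector? no, $u \in X$, $y \in Y$, they are close only if $y$ is a bridge-reachable endpoint — but closeness is symmetric and $Y$-vertices are close to $X$-vertices only through bridges). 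I would therefore adopt the $c+p+c$ scheme with a \emph{single} bridge for the cross case and, for the within-$X$ case, the scheme "$u \to x$ by $(c,\eta)$-closedness of $X$, then a bridge $x \to y$, then... " and accept that this gives a connector from $u$ to $y \in Y$, then reflect: for $u, v \in X$ I form the $(u,v)$-connector by taking a $(c,\eta)$-connector from $u$ to $v$ inside $X$ and enlarging its length from $c$ to $2c+p$ via Proposition~\ref{prop21} applied $(c+p)$ times (each application costs one $\ll$ level and preserves $\ge \eps n$-sized closed neighbourhoods, which holds since $|\tilde N_{c,\eta}(u)| \ge |X| \ge \eps n$ after the cleaning). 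This is legitimate: Proposition~\ref{prop21} exactly says closeness at length $\ell$ with a linear-sized neighbourhood upgrades to length $\ell+1$ with a slightly worse constant. Thus the whole lemma reduces to (i) the three-piece gluing count for the cross case, and (ii) a bounded iteration of Proposition~\ref{prop21} for the within-class case, with all constant losses hidden in the hierarchy $\eta' \ll \eta'' \ll \dots \ll \eta,\eps,1/c,1/p$; and the main delicacy is keeping the vertex sets of the glued pieces pairwise disjoint, handled as usual by "only a bounded number of vertices are ever forbidden, and each choice has $\Omega(n^{\text{size}})$ options."
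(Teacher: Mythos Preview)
Your argument is correct. The paper itself omits the proof, noting only that it is similar to Lemma~2.2 of~\cite{LM1}; your concatenation scheme for the cross case ($u\in X$, $v\in Y$: glue a length-$c$ $(u,x)$-connector, the bridge set $S$, and a length-$c$ $(y,v)$-connector, ranging over all $\ge \eps n^{4p+1}$ bridges $(x,y,S)$) is exactly that argument. For the within-class case your eventual resolution via $(c+p)$ iterations of Proposition~\ref{prop21} is legitimate---the bridge count $\eps n^{4p+1}\le |X|\,|Y|\,n^{4p-1}$ forces $|X|,|Y|\ge \eps n$, so $|\tilde N_{c,\eta}(u)|\ge |X|-1$ for every $u\in X$ and the proposition applies repeatedly---though the several abandoned attempts preceding it in your write-up could simply be deleted.
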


The following result gives another condition that ensures we can `merge' two closed sets $V_1, V_2$ into a larger closed set $V_1 \cup V_2$.

\begin{lemma}\label{lem:tran}
Let $0 < 1/n \ll \eta' \ll  \eta , \eps ,1/c$ with $c \in \mathbb{N}$.
Let $H$ be a $3$-graph of order~$n$.
Let $V_1,\dots, V_d$ be disjoint subsets of $V(H)$ such that $2 \le d \le 4$ and each $V_i$ is $(c, \eta)$-closed in~$H$.
Let $a_1, \dots, a_d$ be non-negative integers such that $a_1 \ge 1$ and $\sum a_i = 4$.
Suppose there exist at least $\eps n^4$ copies $F$ of $K_4^-$ in $H$ such that 
$|V(F)\cap V_i|=a_i$ for all $i\in [d]$ and there exist at least $\eps n^4$ copies $F'$ of $K_4^-$ in $H$ such that $|V(F')\cap V_1|=a_1-1$, $|V(F')\cap V_2|=a_2+1$ and $|V(F')\cap V_j|=a_j$ for all $3\le j\le d$.
Then $V_1\cup V_2$ is $(5c+1, \eta')$-closed in $H$.
\end{lemma}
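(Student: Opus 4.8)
The goal is to reduce Lemma~\ref{lem:tran} to an application of Lemma~\ref{lem:bridge}. Lemma~\ref{lem:bridge} requires us to produce $\eps' n^{4p+1}$ many $(V_1, V_2)$-bridges of some bounded length $p$; once we have these (together with the fact that $V_1$ and $V_2$ are $(c,\eta)$-closed), Lemma~\ref{lem:bridge} immediately gives that $V_1\cup V_2$ is $(2c+p,\eta')$-closed. So the entire task is: use the two large families of $K_4^-$-copies to manufacture many $(x,y)$-connectors of a fixed length with $x\in V_1$, $y\in V_2$.

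\textbf{Constructing one bridge.} Fix the two hypotheses: there are $\eps n^4$ copies $F$ of $K_4^-$ with $|V(F)\cap V_i|=a_i$ for all $i$, and $\eps n^4$ copies $F'$ with one vertex shifted from $V_1$ to $V_2$ (i.e.\ $|V(F')\cap V_1|=a_1-1$, $|V(F')\cap V_2|=a_2+1$, $|V(F')\cap V_j|=a_j$ for $j\ge 3$). The idea is to pick, at random or by averaging, a copy $F$ and a copy $F'$ that are vertex-disjoint and that have the ``right shape'', and then to splice their non-$V_1$, non-$V_2$ portions together. Concretely: let $x\in V(F)\cap V_1$ be a fixed vertex of $F$, and let $y\in V(F')\cap V_2$ be the ``extra'' vertex of $F'$ in $V_2$ (the one occupying the slot that $F$ had in $V_1$). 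Write $F = \{x\}\cup R$ where $|R|=3$, and $F' = \{y\}\cup R'$ where $|R'|=3$. The point is that $H[\{x\}\cup R]$ and $H[\{y\}\cup R']$ are both copies of $K_4^-$, but $R$ and $R'$ occupy \emph{different} vertex sets, so $R\cup R'$ is not by itself a connector. To fix this, we use the $(c,\eta)$-closedness of the $V_i$: for each $i$, the relevant vertices of $R$ lying in $V_i$ and the corresponding vertices of $R'$ lying in $V_i$ (there are equally many in each $V_i$ for $i\ge 3$, and the counts in $V_1,V_2$ are arranged so that the ``leftover'' discrepancy is exactly the single pair $\{$vertex of $R$ in $V_1$, vertex of $R'$ in $V_2\}$ modulo the extra $V_2$-slot) can be paired up and joined by $(c,\eta)$-closeness-connectors of length $c$, all on fresh vertices. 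Assembling $R$, $R'$, these connectors, and bookkeeping vertices gives an $(x,y)$-connector of some fixed length $p$ depending only on $c$ (one checks $p = c+1$ works, or at worst a small absolute multiple): $H[S\cup x]$ decomposes into $F$ together with the ``$x$-side'' copies of $K_4^-$ from the closeness-connectors, and $H[S\cup y]$ decomposes into $F'$ together with the ``$y$-side'' copies. The reason $5c+1$ appears in the conclusion is precisely $2c+p$ with $p$ of this form, via Lemma~\ref{lem:bridge}.

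\textbf{Counting many bridges.} Having described how to build one $(V_1,V_2)$-bridge, the counting is routine: there are $\Omega(n^4)$ choices of $F$, $\Omega(n^4)$ choices of a vertex-disjoint $F'$ (disjointness costs only $O(n^3)$ copies, negligible), and then for each of the $O(1)$ pairs of vertices that must be joined we have $\Omega(n^{4c-1})$ closeness-connectors by the definition of $(c,\eta)$-close — with at each stage only $O(n^{\cdot-1})$ loss to keep everything vertex-disjoint. Multiplying, and noting that each resulting connector $S$ has a fixed size $4p-1$ and is counted a bounded number of times, yields at least $\eps' n^{4p+1}$ distinct $(V_1,V_2)$-bridges of length $p$, for a suitable $\eps'\gg\eta'$. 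Then Lemma~\ref{lem:bridge} finishes the proof. (The hypotheses $a_1\ge 1$ and $\sum a_i = 4$ are needed so that $x\in V_1$ exists and so that $F$, $F'$ are genuinely copies of $K_4^-$ on $4$ vertices; the restriction $d\le 4$ is irrelevant to the argument but matches the intended use.)

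\textbf{Main obstacle.} The delicate point is the bookkeeping in the splicing step: one must verify that the multiset of vertices of $R$ and the multiset of vertices of $R'$, broken up by which $V_i$ they lie in, can be matched up so that the only ``unmatched'' pair is $(x',y')$ with $x'\in V_1$, $y'\in V_2$ — this is exactly where the precise shift ``$a_1\mapsto a_1-1$, $a_2\mapsto a_2+1$, rest fixed'' is used — and that the closeness-connectors joining matched pairs can all be chosen mutually disjoint and disjoint from $F\cup F'$. Getting the length $p$ exactly right (to land at $5c+1$ rather than something larger) and making sure $H[S\cup x]$ and $H[S\cup y]$ really do admit $K_4^-$-\emph{factors} (every vertex covered, no overlaps) is the part that needs care; everything else is a standard supersaturation-style count.
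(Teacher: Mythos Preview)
Your approach is essentially the paper's: pick a vertex-disjoint pair $(F,F')$, designate $x\in V(F)\cap V_1$ and $y\in V(F')\cap V_2$, set $R=V(F)\setminus\{x\}$ and $R'=V(F')\setminus\{y\}$, match the three vertices of $R$ to the three vertices of $R'$ within each class, splice in $(c,\eta)$-connectors $S_1,S_2,S_3$ for the three pairs, and count. Two points of confusion in your write-up should be corrected.

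First, the class-by-class matching between $R$ and $R'$ is \emph{perfect}, with no leftover pair. After removing $x\in V_1$ from $F$, the set $R$ has $a_1-1$ vertices in $V_1$, $a_2$ in $V_2$, and $a_j$ in $V_j$ for $j\ge 3$; after removing $y\in V_2$ from $F'$, the set $R'$ has exactly the same profile. So you get a bijection $R\to R'$ that stays within each $V_i$, and closedness of the $V_i$ applies directly to each of the three matched pairs. There is no ``unmatched'' $(x',y')$ with $x'\in V_1$, $y'\in V_2$ to worry about.

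Second, the length arithmetic: the connector $S = R\cup R'\cup S_1\cup S_2\cup S_3$ has size $6+3(4c-1)=4(3c+1)-1$, so $p=3c+1$ (not $c+1$), and then Lemma~\ref{lem:bridge} gives closedness with parameter $2c+p=5c+1$, matching the statement. To verify that $S$ really is an $(x,y)$-connector: $H[\{x\}\cup S]$ decomposes as $F\cup H[\{y_1\}\cup S_1]\cup H[\{y_2\}\cup S_2]\cup H[\{y_3\}\cup S_3]$, and symmetrically with $y$ and the $x_j$'s.
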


\begin{proof}
Let $\eta''$ be such that $\eta' \ll \eta'' \ll \eta, \eps, 1/c$.
Consider any vertex-disjoint copies $F, F'$ of $K_4^-$ in~$H$ such that 
$|V(F)\cap V_i|=a_i$ for all $i\in [d]$ and $|V(F')\cap V_1|=a_1+1$, $|V(F')\cap V_2|=a_2-1$ and $|V(F')\cap V_j|=a_j$ for all $3\le j\le d$.
Note that there are at least $(\eps n^4/2)^2$ choices of $(F,F')$.
Let $V(F) = \{ x,x_1,x_2, x_3 \}$ and $V(F') = \{ y,y_1,y_2, y_3 \}$ such that $x \in V_1$, $y \in V_2$ and for each $j \in [3]$, $x_j,y_j \in V_{i_j}$ for some~$i_j$.
For each $j \in [3]$, $V_{i_j}$ is $(c, \eta)$-closed.
Therefore, there exist $S_1,S_2,S_3$ such that each $S_j$ is an $(x_j, y_j)$-connector of length~$c$ and $V(F), V(F'), S_1,S_2,S_3$ are vertex-disjoint. 
Note that there are at least $ ( \eta n^{4c-1}/2)^3$ choices of~$(S_1,S_2,S_3)$.
Set $S:=\{x_1, x_2, x_3, y_1, y_2, y_3\} \cup S_1\cup S_2\cup S_3$.
Note that $S$ is a $(x,y)$-connector of length~$3c+1$.
Indeed, $H[x\cup S] \supseteq F \cup  \bigcup_{j \in [3]} H[ y_j \cup S_j]$ has a $K_4^-$-factor, and similarly $H[y\cup S]$ has a $K_4^-$-factor.
Thus $(x, y, S)$ is a $(V_1, V_2)$-bridge of length~$3c+1$.
Hence, we have at least
\[
\frac1{(4(3c+1)+1)!} \left( \frac{\eps n^4}{2} \right)^2 \left( \frac{\eta n^{4c-1}}{2} \right)^3
\ge \eta'' n^{4(3c+1)+1}
\]
$(V_1, V_2)$-bridges of length $3c+1$. 
By Lemma~\ref{lem:bridge}, $V_1\cup V_2$ is $(5c+1, \eta')$-closed in~$H$.
\end{proof}

\subsection{There exists a vertex $v \in V(H)$ such that $v \in L(e)$ for very few edges $e \in E(H)$.} \label{sec:onevertex}

Let $H$ be a $3$-graph satisfying the hypothesis of Lemma~\ref{lem:abs}.
Suppose further that there exists a vertex $v \in V(H)$ such that there are at most $\eps n^3$ edges $e$ such that $v \in L(e)$ (that is, $e \cup v$ spans at least three edges).
In Lemma~\ref{lem:ve}, we show that there exists a small set $V_0 \subseteq V(H)$ such that $H[V_0]$ contains a $K_4^-$-factor and $H \setminus V_0$ is $(6,\eta _*)$-closed for some constant $\eta _*>0$. 
First we will need the following result for graphs.

\begin{proposition} \label{prop:partition}
Let $n \in \mathbb{N}$ and $0 < \gamma \le 1/20$.
Let $G$ be a graph of order~$n$ with $(1/2 - \gamma) n \le  \delta(G) \le \Delta(G) \le 3n/5 $.
Suppose that $|N(x) \triangle N(y) | \le \gamma n$ for every edge $xy\in E(G)$.
Then there exists a bipartition $X , Y$ of $V(G)$ such that $\delta(G[X]), \delta(G[Y]) \ge (1/2 - 5 \gamma) n $ and $(1/2- 5 \gamma) n \le |X|, |Y|  \le (1/2 + 5 \gamma) n $.
\end{proposition}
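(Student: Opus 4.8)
The plan is to grow the bipartition from the closed neighbourhood of a single vertex, after first showing that no vertex degree is too large.

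\emph{Step 1.} I would first prove that $\delta(G)\le (1/2+2\gamma)n$. Suppose not, and pick $v_0$ of minimum degree; put $X_0 := N(v_0)\cup\{v_0\}$ and $Y_0 := V(G)\setminus X_0$, so $|Y_0| = n-\deg(v_0)-1$ with $2n/5-1\le|Y_0|\le(1/2-2\gamma)n-1$ by the degree bounds. For $u\in N(v_0)$ the edge $uv_0$ gives $|N(u)\triangle N(v_0)|\le\gamma n$, so $N(u)\setminus X_0\subseteq N(u)\setminus N(v_0)$ has size $\le\gamma n$; since $\deg(v_0,Y_0)=0$, summing over $u$ gives $e(X_0,Y_0)\le\deg(v_0)\gamma n\le(3\gamma/5)n^2$. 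On the other hand every $w\in Y_0$ has $\deg(w,X_0)=\deg(w)-\deg(w,Y_0)\ge(1/2+2\gamma)n-(|Y_0|-1)\ge 4\gamma n$, so $e(X_0,Y_0)\ge|Y_0|\cdot 4\gamma n\ge(8\gamma/5)n^2-4\gamma n$, which contradicts the previous bound for large $n$. (If $\gamma=1/20$, the hypothesis of this step forces $G$ to be $3n/5$-regular and the same computation applies.)

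\emph{Step 2.} Now I would fix $v_0$ of minimum degree, so $(1/2-\gamma)n\le\deg(v_0)\le(1/2+2\gamma)n$, and keep $X_0,Y_0$. The same estimates give $e(X_0,Y_0)\le\deg(v_0)\gamma n$ and $\delta(G[X_0])\ge(1/2-2\gamma)n$ (use $\deg(v_0,X_0)=\deg(v_0)$ and, for $u\in N(v_0)$, $\deg(u,X_0)=1+|N(u)\cap N(v_0)|\ge 1+\deg(v_0)-\gamma n$). I would then split $Y_0=B\sqcup B'$, where $B:=\{w\in Y_0:N(w)\cap N(v_0)\ne\emptyset\}$. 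For $w\in B'$ one has $N(w)\cap X_0=\emptyset$ (as $w\notin N(v_0)$ and $v_0\notin N(w)$), so $N(w)\subseteq Y_0$. For $w\in B$ choose $u\in N(w)\cap N(v_0)$; then $uw,uv_0\in E(G)$, so by the triangle inequality for symmetric differences $|N(w)\triangle N(v_0)|\le|N(w)\triangle N(u)|+|N(u)\triangle N(v_0)|\le 2\gamma n$, whence (as $v_0\notin N(w)$) $\deg(w,X_0)=|N(w)\cap N(v_0)|\ge\deg(w)-2\gamma n\ge(1/2-3\gamma)n$. Summing this over $w\in B$: $|B|(1/2-3\gamma)n\le e(X_0,Y_0)\le\deg(v_0)\gamma n\le(1/2+2\gamma)\gamma n^2$, so $|B|\le\tfrac{12}{7}\gamma n<2\gamma n$ using $\gamma\le1/20$.

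\emph{Step 3.} I would then take $X:=X_0\cup B$ and $Y:=B'$, and check the four conditions. For sizes: $|X|=\deg(v_0)+1+|B|\le(1/2+2\gamma)n+1+2\gamma n\le(1/2+5\gamma)n$ and $|X|\ge\deg(v_0)+1\ge(1/2-\gamma)n$, so both $|X|$ and $|Y|=n-|X|$ lie in $[(1/2-5\gamma)n,(1/2+5\gamma)n]$. For degrees: every $w\in X_0$ has $\deg(w,X)\ge\deg(w,X_0)\ge(1/2-2\gamma)n$; every $w\in B$ has $\deg(w,X)\ge\deg(w,X_0)\ge(1/2-3\gamma)n$; and every $w\in Y=B'$ satisfies $N(w)\subseteq Y_0=Y\sqcup B$, so $\deg(w,Y)\ge\deg(w)-|B|\ge(1/2-3\gamma)n$. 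Hence $\delta(G[X]),\delta(G[Y])\ge(1/2-3\gamma)n\ge(1/2-5\gamma)n$, as required. I expect the main obstacle to be Step 1: the a priori bound $\Delta(G)\le3n/5$ alone does not keep $|X_0|$ small enough (a vertex could have degree close to $3n/5$, and the vertices of $N(v_0)$ have almost no neighbours outside $X_0$, so there is no way to move them out and rebalance), so one must extract from the symmetric-difference hypothesis that the minimum degree is genuinely near $n/2$; the numerical value $3n/5$ is calibrated precisely so that the threshold $(1/2+2\gamma)n$ together with the bound $|B|\le2\gamma n$ keeps the part sizes inside the $5\gamma$-window. Everything after Step 1 is routine bookkeeping.
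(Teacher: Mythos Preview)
Your proof is correct but takes a genuinely different route from the paper's. The paper argues extremally: it takes a bipartition $X,Y$ minimising $e(X,Y)$, bounds $e(X,Y)\le 3\gamma n^2/5$ by comparing with a closed-neighbourhood partition, then shows that any vertex with more than $4\gamma n$ neighbours on \emph{both} sides would force $e(X,Y)>3\gamma n^2/5$; minimality of $e(X,Y)$ then places every vertex on the correct side. Your approach is constructive: you first establish the auxiliary fact $\delta(G)\le(1/2+2\gamma)n$ (Step~1), then build the partition explicitly from the closed neighbourhood of a minimum-degree vertex and relocate the small set~$B$ of misplaced vertices by hand.

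What each buys: the paper's argument is shorter and bypasses your Step~1 entirely, because the bound $\Delta(G)\le 3n/5$ is only needed once (to bound $e(X_0,Y_0)$), after which the extremal choice of $X,Y$ does all the work---it never commits to the closed-neighbourhood partition as the final answer, so the concern you flag (that $|X_0|$ might be too large) simply does not arise. Your argument, on the other hand, is explicit and in fact yields the slightly stronger bound $\delta(G[X]),\delta(G[Y])\ge(1/2-3\gamma)n$.

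A minor remark: your ``for large $n$'' caveat in Step~1 and the ``$+1$'' in the size bound in Step~3 are harmless, since for any edge $xy$ one has $x\in N(y)\setminus N(x)$ and $y\in N(x)\setminus N(y)$, so $|N(x)\triangle N(y)|\ge 2$ and the hypotheses force $\gamma n\ge 2$; the proposition is vacuous for smaller~$n$.
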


\begin{proof}
Let $X, Y$ be a bipartition of $V(G)$ such that $e(X,Y)$ is minimised. 
First we show that $e(X,Y) \le 3 \gamma n^2 /5 $.
Consider a vertex $x_0 \in V(G)$.
Let $X_0 := N(x_0) \cup x_0$ and $Y_0 := V(G) \setminus X_0$. 
Note that
\begin{align}
	e(X,Y) \le e(X_0,Y_0)  \le \sum_{x \in N(x_0) } |N(x) \triangle N(x_0) |  \le \deg(x_0) \gamma n \le 3 \gamma n^2 /5 , \label{eqn:eXY}
\end{align}
as claimed. 

Suppose there exists $v \in V(G)$ such that $\deg(v,X), \deg(v,Y) > 4 \gamma n$.
Without loss of generality, assume that $\deg(v,X) \ge \deg(v) /2 \ge n/5$. 
For each $w \in N(v,X)$, we have 
\begin{align*}
\deg(w,Y) \ge |N(w)\cap N(v) \cap Y| \ge \deg(v,Y)  - |N(v)\triangle N(w)| > 3 \gamma n, 
\end{align*}
as $|N(v) \triangle N(w)|\le \r n$.
Thus $e(X,Y) > 3 \deg(v,X)  \gamma n \ge 3 \gamma n^2/5$ contradicting~\eqref{eqn:eXY}.
Therefore, for all $v \in V(G)$, either $\deg(v,X) \le 4 \gamma n$ or $\deg(v,Y) \le 4 \gamma n$.
Since $e(X,Y)$ is minimal, we have $\delta(G[X]), \delta(G[Y]) \ge (1/2 - 5 \gamma)n$.
\end{proof}

Define $\eta_*>0$ to be the constant $\eta '$ obtained by applying Lemma~\ref{lem:tran} with $1/30$, $1/128$ and $1$ playing the roles of $\eta$, $\eps$ and $c$, respectively.
\begin{lemma}\label{lem:ve}
For $0 < 1/n\ll \e\ll\r \ll 1$, let $H$ be a $3$-graph of order~$n$ with $\delta_{2}(H)\ge (1/2 - \r) n$.
Suppose there exists a vertex $v\in V(H)$ such that there are less than $\e n^3$ edges $e\in E(H)$ such that $v \in L(e)$.
Then there exists  $V_0\subseteq V(H)$ of order at most $8\sqrt[4]\e n$ such that $H[V_0]$ contains a $K_4^-$-factor and $H\setminus V_0$ is $(6, \eta_* )$-closed.
\end{lemma}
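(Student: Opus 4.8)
The plan is to first exploit the hypothesis on $v$ to extract structural information about the link graph $L_v$, then use that structure to show that almost all of $V(H)$ splits into two nearly balanced sets, each of which is $(c,\eta)$-closed for a bounded $c$, and finally merge these two pieces via Lemma~\ref{lem:tran} after removing a small set $V_0$ spanning a $K_4^-$-factor.

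First I would analyze $L_v$. Since every vertex $u$ has $\deg_H(vu) = \deg_{L_v}(u) \ge (1/2 - \gamma)n$, the graph $L_v$ has large minimum degree. The key point: if $xy \in E(L_v)$ and $|N_{L_v}(x) \triangle N_{L_v}(y)|$ is large, then there are many vertices $w \in N_{L_v}(x)\setminus N_{L_v}(y)$ (or vice versa); for each such $w$, the triple $vxw$ is an edge with $y \in N(vx)$ but we need to count when $vxyw$-type configurations force $v \in L(e)$ for many edges $e$. More carefully, if $xy\in E(L_v)$ and $z \in N_{L_v}(x)\cap N_{L_v}(y)$, then $xyz \in \binom{V}{3}$ has $vx, vy, vz$ all edges, so $v \in N(xy)\cap N(yz)\cap N(xz) \subseteq L(xyz)$ whenever $xyz\in E(H)$; but even without $xyz$ being an edge, the presence of two edges among $\{vxy, vxz, vyz\}$ together with $xyz$... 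Actually the cleanest route: count pairs $(e, \cdot)$ to show $L_v$ cannot have many edges $xy$ with large symmetric difference. If many edges $xy \in E(L_v)$ had $|N_{L_v}(x)\triangle N_{L_v}(y)| > \gamma n$, then (as in the proof of Proposition~\ref{prop:partition}) $e(L_v)$ would be forced to be large in a way that creates $\Omega(n^3)$ triples $xyz$ with at least two of $vx,vy,vz$ edges and the third pair also in $E(L_v)$, i.e. $\Omega(n^3)$ edges $xyz \in E(H)$ with $v \in L(xyz)$, contradicting the hypothesis. This is the step I expect to be the main obstacle: making the counting tight enough to conclude that $|N_{L_v}(x)\triangle N_{L_v}(y)|\le \gamma' n$ for all but a negligible fraction of edges $xy \in E(L_v)$, where $\gamma'$ is controlled (roughly $\sqrt\eps$), and simultaneously handling vertices of very large degree in $L_v$ (degree $> 3n/5$, which could break Proposition~\ref{prop:partition}) — these must be shown to be few in number or deletable.

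Second, after discarding into $V_0$ a small set $B$ (of size $O(\sqrt[4]\eps\, n)$) consisting of $v$ itself, the few vertices of $L_v$ with oversized degree, and the endpoints of the few "bad" edges of $L_v$, the remaining graph $G := L_v[V(H)\setminus(B\cup v)]$ satisfies $(1/2 - \gamma'')n \le \delta(G) \le \Delta(G) \le 3n/5$ and $|N_G(x)\triangle N_G(y)| \le \gamma'' n$ for every $xy\in E(G)$, with $\gamma''$ a suitable function of $\eps$. Proposition~\ref{prop:partition} then gives a bipartition $X, Y$ of $V(G)$ with $\delta(G[X]),\delta(G[Y]) \ge (1/2-5\gamma'')n$ and both parts of size $(1/2\pm 5\gamma'')n$. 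The large minimum degree inside $G[X]$ means: for any two $x, x' \in X$, the triple $v x x'$ has $N_H(vx), N_H(vx')$ large and overlapping heavily inside $X$, so one finds many common neighbours $z \in X$ with $vxz, vx'z \in E(H)$, giving many copies of $K_4^-$ on $\{v,x,x',z\}$; iterating / using Proposition~\ref{prop:le} shows $x$ and $x'$ are $(1,\Omega(1))$-close, so $X$ is $(1,\eta_0)$-closed for an appropriate $\eta_0 \ge 1/30$ (and similarly $Y$). Here I'd make sure the closeness is witnessed by connectors of length $1$ so that the constants match the application of Lemma~\ref{lem:tran} fixed before the lemma statement.

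Finally, to merge $X$ and $Y$: I need many copies of $K_4^-$ with a prescribed distribution across $X$ and $Y$, as required by Lemma~\ref{lem:tran} with $c=1$. Take $a_1 = a_2 = 2$ and $d = 2$: I must produce $\ge \eps' n^4$ copies of $K_4^-$ with two vertices in $X$ and two in $Y$, and $\ge \eps' n^4$ copies with (say) $3$ vertices in $X$ and $1$ in $Y$. Since $G = L_v$ has $|N_G(x)\triangle N_G(y)|$ small on edges and $\delta(G)$ large, the cut $e_G(X,Y)$ is small (at most $O(\gamma'' n^2)$, by the argument in \eqref{eqn:eXY}), but $G[X]$ and $G[Y]$ are nearly complete, which handles the $(3,1)$ and $(1,3)$ distributions easily (copies of $K_4^-$ inside $H$ using $v$ as one vertex: $v$ plus three almost-arbitrary vertices of $X$, of which one is swapped to $Y$ — wait, $v\notin X\cup Y$, so instead use four vertices of $X\cup Y$ with the appropriate split and enough edges among them from $E(H)$). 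For the $(2,2)$ split one uses that $H$ is not $3\gamma$-extremal is \emph{not} available here (this lemma is proved under weaker hypotheses); instead the required $(2,2)$-copies come from the large codegree $\delta_2(H)\ge(1/2-\gamma)n$ directly, which forces $\Omega(n^4)$ copies of $K_4^-$ in \emph{every} balanced bipartition unless the bipartition is extremal — and if it were extremal we'd be in a position to handle it separately, but more simply: pick $x,x'\in X$ and $y,y'\in Y$; we need $\ge 3$ edges among these four vertices; since each pair has codegree $\ge(1/2-\gamma)n$, standard counting gives $\Omega(n^4)$ such $4$-sets with $\ge 3$ internal edges (e.g. via Proposition~\ref{prop:le}-style inclusion–exclusion, or directly: the number of $4$-sets with $\le 2$ edges is negligible compared to $\binom{|X|}{2}\binom{|Y|}{2}$ when codegrees are this large — this needs a short computation but no new ideas). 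Having both families, Lemma~\ref{lem:tran} (with $c=1$, the preset constants) yields that $X\cup Y = V(H)\setminus V_0$ is $(6,\eta_*)$-closed, and $|V_0| = |B| \le 8\sqrt[4]\eps\, n$ provided we also arranged $|V_0|\in 4\mathbb N$ by absorbing up to three more vertices into a $K_4^-$ of $H[B]$ — which exists since $H[B]$ has positive density, or by a direct greedy choice; this completes the proof.
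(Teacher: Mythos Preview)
Your overall architecture matches the paper's: analyze $L_v$, remove a small $V_0$, apply Proposition~\ref{prop:partition} to the cleaned link graph to obtain a near-balanced partition $X,Y$, show each part is $(1,\Omega(1))$-closed, and merge via Lemma~\ref{lem:tran}. However, three of your steps contain genuine gaps.

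\textbf{The bad-edge criterion.} You propose to call an edge $xy$ of $L_v$ bad when $|N_{L_v}(x)\triangle N_{L_v}(y)|$ is large, and claim this produces many edges $e$ with $v\in L(e)$. But a vertex $z\in N_{L_v}(x)\setminus N_{L_v}(y)$ gives only $vxy,vxz\in E(H)$; the triple $xyz$ need not be an edge of $H$, so you cannot count it. The paper's definition is different and sharper: $uw\in E(L_v)$ is \emph{bad} if $|N_{L_v}(u)\cap N_H(uw)|$ (or the symmetric quantity) exceeds $3\sqrt{\eps}\,n$. For $z$ in this intersection one has $uwz\in E(H)$ and $vuw,vuz\in E(H)$, so $v\in L(uwz)$ automatically. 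Only \emph{after} restricting to good edges does the paper derive the symmetric-difference bound, via the codegree inequality $|N_{G'}(u)\setminus N_{G'}(w)|\le |V'\setminus (N_{G'}(w)\cup N_{H'}(uw))| + |N_{G'}(u)\cap N_{H'}(uw)|$.

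\textbf{Closing $X$ without using $v$.} Your proposed $(x,x')$-connectors are built from copies of $K_4^-$ on $\{v,x,x',z\}$. But $v\in V_0$, and the lemma asserts that $H\setminus V_0$ is $(6,\eta_*)$-closed, so connectors must avoid $V_0$. The paper instead shows that for a good edge $uw$ in $G'[X]$ one has $\deg_{H'}(uw,Y)\ge(1-400\gamma)|Y|$; hence any triangle $x x_1 x_2$ in $G'[X]$ has $|L(xx_1x_2)\cap Y|\ge(1-1200\gamma)|Y|$, and for $x,x'\in X$ the set $x_1x_2z$ with $z\in L(xx_1x_2)\cap L(x'x_1x_2)\cap Y$ is the desired connector inside $X\cup Y$.

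\textbf{The $(2,2)$ copies of $K_4^-$.} Your claim that $\delta_2(H)\ge(1/2-\gamma)n$ alone forces $\Omega(n^4)$ copies of $K_4^-$ with two vertices in each side of any balanced bipartition is false: in $\mathcal{B}[A,B]$ with $X=A$, $Y=B$, every $4$-set with $|\,\cdot\cap X|=|\,\cdot\cap Y|=2$ has exactly two edges (both of type $XYY$) and never spans a $K_4^-$. The paper does not appeal to non-extremality here; instead it uses the good-edge structure to prove $e_{H'}(XXY)\ge(1-600\gamma)|X|^2|Y|/2$ and the symmetric bound on $e_{H'}(XYY)$, then feeds these into Proposition~\ref{prop:axy} to obtain $\ge(1-1200\gamma)\binom{|X|}{2}\binom{|Y|}{2}$ copies of $K_4^-$ of type $XXYY$.
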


Here is a sketch of our proof.
First we show that there exists a partition $X,Y,V_0$ such that $H[V_0]$ contains a $K_4^-$-factor and almost all $XXY$- and $YYX$-edges exist.
We then show that both $X$ and $Y$ are $(1, 1/30)$-closed in $H[X \cup Y]$.
Furthermore, we show that there are many copies $F,F'$ of $K_4^-$ such that $|V(F) \cap X|=3$, $|V(F) \cap Y|=1$ and $|V(F') \cap X|=2 = |V(F') \cap Y|$.
Then by Lemma~\ref{lem:tran}, $H[X \cup Y]$ is $(6, \eta_*)$-closed.

\begin{proof}[Proof of Lemma~\ref{lem:ve}]
Set $G:= L_{v}$.
Then $\delta_{2}(H)\ge (1/2 - \r) n$ implies that $\delta(G)\ge (1/2 - \r) n$.
We say that an edge $uw \in E(G)$ is \emph{good} if $|N_G(u)\cap N_H(uw)|\le 3\sqrt{\e} n$ and $|N_G(w)\cap N_H(uw)|\le 3\sqrt{\e} n$, otherwise we call it \emph{bad}.

\begin{claim}\label{clm:ve} \
\begin{enumerate}[label = {\rm (\roman*)}]
\item There are at most $\sqrt{\e} n^2$ bad edges in $G$.
\item If $u \in V(G)$ is incident with a good edge, then $\deg_{G} (u)  \le (1/2 + \gamma + 3\sqrt{\e} )  n$.
\end{enumerate}
\end{claim}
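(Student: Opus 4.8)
The plan is to prove the two parts of Claim~\ref{clm:ve} essentially by double counting, using the hypothesis that $v$ lies in $L(e)$ for very few edges~$e$.

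\textbf{Proof of (i).} Suppose $uw \in E(G)$ is bad, say $|N_G(u) \cap N_H(uw)| > 3\sqrt{\e}\, n$ (the other case is symmetric). First I observe that if $z \in N_G(u) \cap N_H(uw)$, then $uwz$ is an edge of $H$ (since $z \in N_H(uw)$) and moreover $v \in L(uwz)$: indeed $uz \in E(G) = E(L_v)$ means $uzv \in E(H)$, and $uwv \in E(H)$ since $uw \in E(G)$, and $uwz \in E(H)$; so $uwz \cup v$ spans (at least) the three edges $uwv, uzv, uwz$, hence $v \in L(uwz)$. Thus for every bad edge $uw$, there are more than $3\sqrt{\e}\,n$ vertices $z$ with $v \in L(uwz)$. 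If there were more than $\sqrt{\e}\,n^2$ bad edges, this would produce more than $\sqrt{\e}\, n^2 \cdot 3\sqrt{\e}\, n / 3 = \e n^3$ edges $e \in E(H)$ with $v \in L(e)$ (the division by $3$ accounts for each such edge $e$ being counted at most three times, once for each of its pairs), contradicting the hypothesis. Hence there are at most $\sqrt{\e}\, n^2$ bad edges.

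\textbf{Proof of (ii).} Let $u$ be incident with a good edge $uw$, so $|N_G(u) \cap N_H(uw)| \le 3\sqrt{\e}\,n$. Write $N := N_G(u) \setminus N_H(uw)$, so $\deg_G(u) \le |N| + 3\sqrt{\e}\,n$. The point is that $N \cap N_G(w) = \emptyset$: if $z \in N_G(u) \cap N_G(w)$ then $uzv, wzv \in E(H)$, and together with $uwv \in E(H)$ this gives $v \in L(uwz)$... but that alone does not force $z \in N_H(uw)$. Instead I use the cleaner fact that $z \notin N_H(uw)$ together with $z \in N_G(u)$: if also $z \in N_G(w)$, then $uwz \notin E(H)$ while $uzv, wzv, uwv \in E(H)$, so $\{u,w,z,v\}$ spans the three edges $uzv, wzv, uwv$, i.e.\ these four vertices span a copy of $K_4^-$ with $v$ in it; more to the point, reversing roles, $v \in L(uwz)$ would require $uwz \in E(H)$ as one of the three edges only if $uw$ or $uz$ or $wz$ fails... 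I will instead argue directly: the sets $N_G(u) \setminus N_H(uw)$ and $N_G(w) \setminus N_H(uw)$ are disjoint, because any common vertex $z$ gives the $K_4^-$ on $\{u,w,z,v\}$ using edges $uvz, wvz, uvw$, and this copy has $v$ in it, so contributes to an edge $uwz$(?) — the correct bound is simply $|N_G(u) \cup N_G(w)| \le n$, hence $|N| + |N_G(w) \setminus N_H(uw)| \le n + |N_H(uw)| \le n + |N_G(w) \cap N_H(uw)|$, but this is circular. The honest plan: show $N_G(u) \cap N_G(w) \subseteq N_H(uw)$ fails in general, so instead bound $\deg_G(u) \le |N_G(u)\cap N_H(uw)| + |N_G(u)\setminus N_H(uw)|$ and note $N_G(u)\setminus N_H(uw)$ is disjoint from $N_H(uw) \supseteq$ (nothing helpful). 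So the genuinely correct argument is: $z \in N_G(u) \cap N_G(w)$ with $z \notin N_H(uw)$ means $\{u,w,z,v\}$ spans $K_4^-$ (edges $uvw, uvz, wvz$), so $z \in L(uwz)$... the count of such $z$ is at most the number of edges $e$ with $v\in L(e)$ divided appropriately, which is $o(n^2)$, hence in particular $\deg_G(u) \le |N_G(u)\cap N_H(uw)| + |V(G) \setminus N_G(w)| + (\text{few}) \le 3\sqrt\e n + (n - \deg_G(w)) + \e n^2/\!\!\cdots$; since $\deg_G(w) \ge (1/2-\gamma)n$ this gives $\deg_G(u) \le (1/2 + \gamma + 3\sqrt\e)n$ after absorbing the lower-order term.

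\textbf{Main obstacle.} The delicate point is part (ii): one must correctly identify which pairs of common neighbours in $G$ are "free" versus forced to lie in $N_H(uw)$, and then control the exceptional vertices via the hypothesis on $|\{e : v \in L(e)\}|$. I expect the clean statement to be that $N_G(u) \cap N_G(w) \setminus N_H(uw)$ has size at most (roughly) $3\sqrt\e n$ as well — because each such vertex $z$ yields a distinct edge $uvz \in E(H)$ whose "partner" structure is constrained — so that $\deg_G(u) \le n - \deg_G(w) + 6\sqrt\e n \le (1/2 + \gamma + 6\sqrt\e)n$, which after renaming $\sqrt\e$ constants gives the claimed bound. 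Getting the exact constant $3\sqrt\e$ versus a small multiple of it, and making sure the $\e n^3$ budget is spent correctly (each bad edge consuming $3\sqrt\e n$ of it, each bad common-neighbour pair consuming its share), is the only real bookkeeping; the conceptual content is the elementary observation that a "collision" in link-neighbourhoods around $v$ forces $v$ into $L(e)$ for some edge $e$.
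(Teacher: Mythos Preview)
Your proof of (i) is correct and essentially identical to the paper's: each bad edge $uw$ produces at least $3\sqrt{\eps}\,n$ edges $uwz \in E(H)$ with $v \in L(uwz)$, and double counting (dividing by $3$) gives the contradiction.

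Your proof of (ii), however, has a genuine gap. You try to bound $\deg_G(u)$ by controlling $N_G(u)\cap N_G(w)$, splitting it into the part inside $N_H(uw)$ (small by goodness) and the part outside. But the part outside cannot be bounded from the hypothesis on $\{e : v \in L(e)\}$: if $z \in N_G(u)\cap N_G(w)\setminus N_H(uw)$, then the $4$-set $\{u,w,z,v\}$ has exactly the three edges $uvw,\,uvz,\,wvz$, all through $v$, while $uwz \notin E(H)$. The only triple in $\{u,w,z,v\}$ avoiding $v$ is $uwz$, which is \emph{not} an edge, so this configuration produces no edge $e$ with $v \in L(e)$. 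Hence nothing forces $|N_G(u)\cap N_G(w)\setminus N_H(uw)|$ to be small, and your inequality $\deg_G(u)\le n-\deg_G(w)+6\sqrt{\eps}\,n$ is unjustified.

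The paper bypasses $N_G(w)$ entirely. The one-line argument is: since $N_G(u)$ and $N_H(uw)$ are both subsets of $V(H)$,
\[
3\sqrt{\eps}\,n \ \ge\ |N_G(u)\cap N_H(uw)| \ \ge\ \deg_G(u)+\deg_H(uw)-n,
\]
and $\deg_H(uw)\ge \delta_2(H)\ge (1/2-\gamma)n$ immediately gives $\deg_G(u)\le (1/2+\gamma+3\sqrt{\eps})n$. The point you missed is to use the codegree of the pair $uw$ in $H$ directly, rather than the $G$-degree of the other endpoint $w$.
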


\begin{proof}[Proof of claim]
For each bad edge $uw$, there are at least $3\sqrt{\e} n$ edges $e$ of $H$ such that $uw \subseteq e$ and $e$ contains at least two edges in~$G$.
Moreover, $v \in L(e)$.
Thus there are at least $3\sqrt{\e} n$ edges~$e$ of $H \setminus \{v\}$ such that $uw \subseteq e$ and $v \in L(e)$.
If there are at least $\sqrt\e n^2$ bad edges, then there are at least $\frac13 \sqrt\e n^2 \cdot 3\sqrt\e n=\e n^3$ edges $e \in E(H)$ such that $v \in L(e)$, contradicting the assumption.
Thus (i) holds. 

Suppose that $u \in V(G)$ is incident with a good edge $uw$ in $G$.
Note that 
\begin{align*}
 3 \sqrt{\eps} n \ge |N_G(u)\cap N_H(uw)| \ge \deg_{G} (u) + \deg_H(uw) - n.
\end{align*}
Since $\delta_2(H) \ge (1/2- \r ) n$, (ii) holds. 
\end{proof}

Let $V_0'$ be the set of vertices $u \in V(G)$ that are incident to at least $\sqrt[4]\e n$ bad edges in $G$.
Since there are at most $\sqrt{\e} n^2$ bad edges, $|V_0'| \le 2\sqrt[4]\e n$.
By the greedy algorithm and Proposition~\ref{prop:le}, there exists a vertex set $V_0 \supseteq V_0'$ such that $H[V_0]$ contains a $K_4^-$-factor and $|V_0| \le 8\sqrt[4]\e n$. 
By pairing $v$ with a vertex in $V'_0$, we can ensure that $v \in V_0$. 

Set $V' := V(H) \setminus V_0$ and $n' := |V'|$.
Let $H' := H [V']$, so $\delta_2(H') \ge (1/2- \r - 8\sqrt[4]\e) n \ge (1/2- 2 \r) n'$.
Let $G'$ be the spanning subgraph of $G \setminus V_0$ induced by the good edges.
Note that for all $u \in V'$,
\begin{align}
	\deg_{G'}(u)  \ge \delta(G) - |V_0| - \sqrt[4]\e n \ge (1/2-\r - 9 \sqrt[4]\e)  n  \ge (1/2 - 2\gamma )  n' \nonumber 
\end{align}
and by Claim~\ref{clm:ve}(ii), $\deg_{G'}(u) \le (1/2 + \gamma + 3\sqrt{\e} )  n \le (1/2 + 2\gamma )  n' $.
Since each edge $uw$ in $G'$ is good, we have 
\begin{align}
|N_{G'}(u)\cap N_{H'}(uw)|\le 3\sqrt{\e} n \le 4\sqrt{\e} n'
\label{eqn:L'H'}
\end{align}
and so $| N_{G'}(u)\cup N_{H'}(uw)| \ge 2(1/2 - 2\r) n' - 4\sqrt{\e} n'$.
Consequently
\begin{align*}
|N_{G'} (u) \setminus N_{G'}(w) | & = |(N_{G'}(u)\setminus N_{G'}(w))\setminus N_{H'}(uw) | + | (N_{G'}(u) \setminus N_{G'}(w)) \cap N_{H'}(uw) | \\
& \le |V(H')\setminus (N_{G'}(w)\cup N_{H'}(uw))| + |N_{G'}(u)\cap N_{H'}(uw)|\\
&\le (n' - 2(1/2- 2\r)n' +4 \sqrt\e n') + 4 \sqrt\e n' \le 5\r n'.
\end{align*}
Similarly, we have $|N_{G'}(w)\setminus N_{G'}(u)| \le 5\r n'$.
Thus $|N_{G'}(u)\triangle N_{G'}(w)|\le 10 \r n'$.

Applying Proposition~\ref{prop:partition} to~$G'$ we obtain a bipartition $X \cup Y$ of $V'$ such that $(1/2- 50 \gamma) n' \le |X|, |Y|  \le (1/2 + 50 \gamma) n' $ and $\delta(G'[X]) \ge (1/2 - 50 \gamma) n'  \ge (1 - 200\r) |X|$ and $\delta(G'[Y])  \ge (1 - 200\r) |Y|$.

Consider any edge $uw$ in $G'[X]$.
Observe that 
\[
|X \setminus N_{G'}(u)|\le |X| - \deg_{G'}(u, X)\le  (1/2 + 50 \gamma) n' - (1/2 - 50 \gamma) n'  \le  100 \r n'.
\]
So by~\eqref{eqn:L'H'},
\begin{align}
	\deg_{H'}(uw,Y) & =  \deg_{H'}(uw) - \deg_{H'}(uw,X) \nonumber \\
	& \ge 	
	(1/2- 2 \r) n' - ( |X \setminus N_{G'}(u)| + |N_{G'}(u)\cap N_{H'}(uw)| ) \nonumber \\
	& \ge ( 1/2 - 2\r - 100 \r - 4\sqrt{\e} ) n' \nonumber \\
	& \ge (1/2 - 103 \r) n' \ge (1-400 \r)|Y|, \label{eqn:d2H'}
\end{align}
as $|Y|  \le (1/2 + 50 \gamma) n' $. Therefore, 
\begin{align}
e_{H'}(XXY) & \ge \sum_{uw \in E(G'[X])} \deg_{H'}(uw,Y) \ge 
e(G'[X]) (1-400 \r)|Y| \nonumber \\
& \ge (1 - 200\r)(1-400\r ) |X|^2|Y|/2
\ge (1 - 600\r) |X|^2|Y|/2 \label{eqn:XXY}
\end{align}
as $\delta(G'[X])\ge (1 - 200\r) |X|$.

Note that for every 3-set $T\subseteq X$ that forms a triangle in $G'[X]$, \eqref{eqn:d2H'} implies that
\begin{align}
|L(T) \cap Y|\ge (1 - 1200 \r ) |Y|. \label{eqn:L(T)}
\end{align}
Recall that $\delta(G'[X]) \ge (1 - 200\r) |X|$.
So there are at least $(1-200\r)(1-400\r) \binom{|X|}3 (1-1200\r) |Y| \ge  n^{4}/128$ copies of $K_4^-$ in $H'$ with three vertices in~$X$ and one in~$Y$.

We now show that $X$ is $(1, 1/30)$-closed in $H'$. 
Let $x, x'$ be two distinct vertices in $X$.
Since $\delta(G'[X]) \ge (1 - 200\r) |X|$, there are at least $(1-400\r )|X| (1- 600 \r) |X|/2 \ge n^2/10$ choices of $x_1,x_2 \in X$ such that both $x x_1 x_2$ and $x' x_1 x_2$ form triangles in~$G'$. 
Let $Z: = L_{H'}(x x_1 x_2) \cap L_{H'}(x' x_1 x_2) \cap Y$.
Thus $|Z| \ge (1 - 2400 \r ) |Y| \ge n/3$ by~\eqref{eqn:L(T)}.
Notice that for each $z \in Z$, $x_1 x_2 z$ is an $(x,x')$-connector of length~$1$ in $H'$.
Thus, we get at least $n^3/30$ $(x,x')$-connectors of length~$1$, that is, $x$ and $x'$ are $(1,1/30)$-close in~$H'$.
Therefore, $X$ is $(1, 1/30)$-closed in~$H'$ as required.

By a similar argument, we have that $Y$ is $(1, 1/30)$-closed in~$H'$ and $e_{H'}(XYY) \ge (1 - 600\r) |X||Y|^2/2 $.
Together with \eqref{eqn:XXY}, we have 
\begin{align*}
	(|Y|-1)e_{H'}(XXY) + (|X|-1)e_{H'}(XYY)  \ge 4(1 - 600\r)\binom{|X|}2 \binom{|Y|}2. 
\end{align*}
By Proposition \ref{prop:axy}, there are at least $(1-1200\r)\binom{|X|}2\binom{|Y|}2\ge n^4/128$ copies of $K_4^-$ in $H'$ with two vertices in each of $X$ and~$Y$.
Recall that there are at least $n^{4}/128$ copies of $K_4^-$ in $H'$ with three vertices in~$X$ and one in~$Y$.
Lemma~\ref{lem:tran} implies that $X\cup Y$ is $(6, \eta_*)$-closed in~$H'$, as desired.
\end{proof}

\subsection{Partitioning $V(H)$ into $(c, \eta)$-closed components.}
Because of Lemma~\ref{lem:ve}, we may assume that for every $v \in V(H)$, there are at least $\eps n^3$ edges $e$ such that $v \in L(e)$.
Recall that $\tilde{N}_{c, \eta}(v)$ is the set of vertices that are $(c, \eta)$-close to~$v$ in $H$. 
First we show that $\tilde{N}_{1, \eta}(v)$ is large for each $v \in V(H)$.

\begin{proposition}\label{prop:Nv}
Let $n \in \mathbb{N}$ and $0 < \eps, \r <1$.
Let $H$ be a $3$-graph of order~$n$ with $\delta_{2}(H)\ge (1/2 - \r) n$. 
Let $v \in V(H)$.
Suppose that there are at least $\e n^3$ edges $e \in E(H)$ such that $v \in L(e)$.
Then $|\tilde{N}_{1, \r \e}(v)|\ge (1/4 - 3\r)n$.
\end{proposition}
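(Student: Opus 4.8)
The plan is to argue by contradiction with a double count of length-$1$ connectors at~$v$. Set $A := \tilde{N}_{1,\r\e}(v)$ and suppose, for a contradiction, that $|A| < (1/4 - 3\r)n$. The observation that drives everything is that whenever $e\in E(H)$ satisfies $v\in L(e)$ and $y\in L(e)$, the triple $e$ is itself a $(v,y)$-connector of length~$1$: it has three vertices, it is disjoint from $\{v,y\}$ (a vertex of $L(e)$ cannot lie in $e$, since $e$ together with it must span a $4$-vertex copy of $K_4^-$), and both $H[e\cup v]$ and $H[e\cup y]$ span copies of $K_4^-$ by the definition of $L$. Applying this with $y=v$: each of the $\ge \e n^3\ge \r\e n^3$ edges $e$ with $v\in L(e)$ is a $(v,v)$-connector of length~$1$, so $v\in\tilde{N}_{1,\r\e}(v)=A$. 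Hence the set $U:=V(H)\setminus A$ has size $n-|A|$ and does not contain~$v$.

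Next I would estimate $|L(e)\cap U|$ for an edge $e=xyz$ with $v\in L(e)$ by applying Proposition~\ref{prop:le} with the set~$U$. Since $U$ omits only the $|A|$ vertices of~$A$, we have $\deg(xy,U)\ge\deg(xy)-|A|\ge(1/2-\r)n-|A|$, and likewise for the pairs $yz$ and $xz$; with $|U|=n-|A|$, Proposition~\ref{prop:le} gives
\[
|L(e)\cap U| \;\ge\; \tfrac12\bigl(3\bigl((1/2-\r)n-|A|\bigr)-(n-|A|)\bigr) \;=\; (1/4-3\r/2)n-|A| \;>\; (3\r/2)n,
\]
where the last step uses the assumption $|A|<(1/4-3\r)n$.

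Finally I would double-count the set $P$ of pairs $(e,y)$ with $e\in E(H)$, $v\in L(e)$ and $y\in L(e)\cap U$. Summing the bound above over the at least $\e n^3$ edges $e$ with $v\in L(e)$ gives $|P|>(3\r\e/2)n^4$. On the other hand, for each fixed $y\in U$ we have $y\notin\tilde{N}_{1,\r\e}(v)$, so there are fewer than $\r\e n^3$ $(v,y)$-connectors of length~$1$, and by the key observation every $e$ with $(e,y)\in P$ is one of these; hence $y$ lies in fewer than $\r\e n^3$ pairs of $P$, and summing over the at most $n$ vertices $y\in U$ yields $|P|<\r\e n^4$. These two bounds on $|P|$ are contradictory, so $|\tilde{N}_{1,\r\e}(v)|\ge(1/4-3\r)n$. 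The one point that genuinely needs thought is the decision to run Proposition~\ref{prop:le} over $U$ (the vertices not yet known to be close to~$v$) rather than over all of $V(H)$: this is exactly what converts a trivial count into one with the sharp constant $1/4-3\r$. The remaining steps are routine bookkeeping, so I do not anticipate a real obstacle.
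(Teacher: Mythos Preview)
Your argument is correct and is essentially the same double count as the paper's, just organised as a proof by contradiction with Proposition~\ref{prop:le} applied to $U=V(H)\setminus \tilde N_{1,\gamma\eps}(v)$ rather than to all of $V(H)$. One small correction to your closing remark: the paper obtains the same constant $1/4-3\gamma$ by applying Proposition~\ref{prop:le} over the whole vertex set (giving $|L(e)|\ge(1/4-2\gamma)n$) and then splitting the upper bound on $\sum_{e}|L(e)|$ into contributions from $\tilde N_{1,\gamma\eps}(v)$ and its complement, so restricting to $U$ is a convenience rather than the key step.
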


\begin{proof}
Let $E' \subseteq E(H)$ be the set of edges $e$ such that $v \in L(e)$.
By Proposition \ref{prop:le}, for every edge $e \in E'$, $|L(e)|\ge (1/4 - 2\r)n$. 
Thus, we have $$\sum_{ e \in E'} |L(e)| \ge |E'| (1/4-2\r)n.$$
For any $e \in E'$ and any $u \in L(e) \setminus \{v\}$, $e$ is an $(u,v)$-connector of length~$1$. 
Hence, if $u \ne v$ is a vertex in $V(H)$ and there are at least $\r\e n^3$ edges $e \in E'$ such that $u \in L(e)$, then $ u\in \tilde{N}_{1,\r\e}(v)$. 
Thus
\[
\sum_{e \in E'} |L(e)| \le |\tilde{N}_{1, \r\e}(v)| |E'| + n \cdot \r\e n^3 \le (|\tilde{N}_{1, \r\e}(v)| + \gamma n ) |E'|
\]
as $|E'|\ge \e n^3$.
Therefore, we have $|\tilde{N}_{1,\r\e}(v)| \ge (1/4-3\r)n$.
\end{proof}

Next we show that $V(H)$ can be partitioned into at most $4$ parts such that each part is $(16, \eta)$-closed in $H$.

\begin{lemma}\label{lem:P}
Let $0< 1/n \ll \eta \ll \eps , \r\ll 1$.
Suppose $H$ is a $3$-graph of order~$n$ such that $|\tilde{N}_{1, \r \eps}(v)|\ge (1/4 - 3\r)n$ for every $v\in V(H)$.
Then there is a partition $\mathcal{P}=\{V_1,\dots, V_d\}$ of $V(H)$ such that $d\le 4$ and each $V_i$ is $(16, \eta)$-closed in $H$ and $|V_i|\ge (1/4 - 4\r)n$.
\end{lemma}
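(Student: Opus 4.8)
\textbf{Proof proposal for Lemma~\ref{lem:P}.}

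The plan is to build the partition greedily, one $(16,\eta)$-closed part at a time, using the fact that the sets $\tilde N_{1,\r\eps}(v)$ are large to guarantee that the parts are few and individually large. The engine driving this is a ``closure promotion'' argument: although we only know that $v$ is $(1,\r\eps)$-close to a large set of vertices, Proposition~\ref{prop21} lets us upgrade $(c,\eta)$-closeness (for the relevant vertex and a linear-sized neighbourhood) to $(c+1,\eta')$-closeness at the cost of shrinking the constant, so starting from $(1,\r\eps)$ we can afford to climb up to $c=16$ and still have a fixed positive constant left over. The hierarchy $\eta \ll \eps,\r$ is chosen precisely so that all these finitely many degradations of the closeness parameter stay above $\eta$.

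First I would set up an auxiliary relation on $V(H)$: say $u\sim v$ if $u$ and $v$ are $(c,\eta')$-close for some suitable fixed $c\le 16$ and $\eta'\gg\eta$. The first key step is to show this relation is essentially transitive on large sets. If $u$ is $(c,\eta_1)$-close to $v$ and $v$ is $(c,\eta_1)$-close to $w$, then concatenating an $(u,v)$-connector with a $(v,w)$-connector (inserting $v$ itself into a $K_4^-$, which is possible because $v$ has many edges $e$ with $v\in L(e)$ — this is where that hypothesis is used beyond Proposition~\ref{prop:Nv}) produces many $(u,w)$-connectors of length roughly $2c+1$, so $u$ and $w$ are $(2c+1,\eta_2)$-close with $\eta_2\gg\eta$. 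Combined with Proposition~\ref{prop21} to realign lengths, we get that ``being close'' behaves like an equivalence relation once restricted to the large sets $\tilde N$. The second key step is a counting/pigeonhole argument: since $|\tilde N_{1,\r\eps}(v)|\ge (1/4-3\r)n$ for every $v$, any two vertices whose $\tilde N$-sets intersect are close to a common vertex and hence (by the transitivity step) close to each other; and four pairwise-disjoint sets each of size $\ge(1/4-4\r)n$ already exhaust $V(H)$ up to a negligible remainder, so there can be at most four equivalence classes. Finally I would absorb the $O(\r n)$ leftover vertices: each such vertex $v$ has $|\tilde N_{1,\r\eps}(v)|\ge(1/4-3\r)n$, which must meet one of the big classes, so $v$ can be added to that class while keeping it $(16,\eta)$-closed (possibly after one more application of Proposition~\ref{prop21} to unify the connector length to $16$), and the lower bound $|V_i|\ge(1/4-4\r)n$ survives.

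Concretely the steps are: (1) prove the ``transitivity of closeness'' lemma for vertices, using connector concatenation together with the assumption that every $v$ lies in $L(e)$ for many edges $e$ (so $v$ can be spliced in); (2) apply Proposition~\ref{prop21} repeatedly to standardise all connector lengths to a common value $c\le 16$ with a common constant $\eta'\gg\eta$; (3) define the classes of the resulting equivalence relation and use the size bound $|\tilde N|\ge(1/4-3\r)n$ plus disjointness to conclude $d\le 4$ and each class has size $\ge(1/4-4\r)n$; (4) mop up the remaining $\le 4\r n$ vertices into existing classes. I expect the main obstacle to be step~(1): making the connector-concatenation bookkeeping precise — ensuring the connectors and the copies of $K_4^-$ one uses are vertex-disjoint, that the number of valid configurations is still $\Omega(n^{4c'-1})$ after all the intersection conditions, and that splicing in the middle vertex $v$ genuinely costs only a bounded increase in length — while keeping the closeness constant above $\eta$ through every one of the (boundedly many) degradations. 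Steps (2)–(4) are then routine pigeonhole and bookkeeping given the hierarchy.
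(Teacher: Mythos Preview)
Your proposal has the right ingredients (connector concatenation, pigeonhole on the sets $\tilde N_{1,\gamma\eps}(v)$, absorption of leftovers), but there is a genuine gap in step~(3), and step~(1) contains a misconception.

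First, the minor point. Concatenation of connectors is simpler than you think and does \emph{not} require any ``$v\in L(e)$ for many $e$'' hypothesis (which is not part of this lemma anyway). If $S_1$ is a $(u,v)$-connector and $S_2$ is a disjoint $(v,w)$-connector, then $S:=S_1\cup\{v\}\cup S_2$ is already a $(u,w)$-connector: for $H[S\cup u]$ use the factor on $S_1\cup u$ together with the factor on $S_2\cup v$, and for $H[S\cup w]$ use $S_1\cup v$ together with $S_2\cup w$. No extra copy of $K_4^-$ containing $v$ is needed.

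The real gap is in turning ``transitivity of closeness'' into a partition into $(16,\eta)$-closed parts. Closeness with fixed parameters is \emph{not} an equivalence relation: each concatenation doubles the connector length and shrinks the constant. Your argument correctly shows that among any five vertices two are $(2,\eta_1)$-close, and hence that the transitive closure of closeness has at most four classes; but within a class two vertices could be connected only by a long chain of closeness steps, and you give no bound on that chain length. Without such a bound you cannot conclude the classes are $(c,\eta)$-closed for any fixed $c\le 16$. Large $\tilde N$-sets alone do not bound the diameter of the closeness graph.

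The paper supplies the missing idea via a maximality argument rather than an equivalence relation. One takes $d$ to be the \emph{largest} integer for which there exist $v_1,\dots,v_d$ that are pairwise not $(2^{6-d},\eta_{6-d})$-close; the pigeonhole bound forces $d\le 4$. Maximality is then used twice: it forces every vertex to lie in some $\tilde N_{2^{5-d}}(v_i)$ (else one could append it to the list), and it forces each $U_i:=\tilde N_{2^{5-d}}(v_i)\setminus\bigcup_{j\ne i}\tilde N_{2^{5-d}}(v_j)$ to be closed, since two non-close vertices in $U_i$ together with $\{v_j:j\ne i\}$ would give $d+1$ pairwise non-close vertices. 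This bounds the connector length by a fixed function of $d\le 4$, which is exactly what your plan is missing.
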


\begin{proof}
Let $\alpha, \eta_0, \eta_1, \eta_2, \eta_3, \eta_4$ be such that $\alpha \ll \gamma$,  $\eta_0 := \e\r$ and 
\begin{align*}
1/n \ll \eta = \eta_{4}\ll \eta_3 \ll \eta_2 \ll  \eta_1 \ll \eta_0, \alpha.
\end{align*}
Throughout this proof, for $v\in V(H)$ and $i\in [4]$, we write $\tilde{N}_{2^i, \eta_i}(v)$ as $\tilde N_{2^i}(v)$ for short. 
By assumption, for any $v\in V(H)$, $|\tilde N_{2^0}(v)|=|\tilde{N}_{1, \eta_0}(v)| \ge (1/4 - 3\r)n$.
We also write $2^i$-close (respectively $2^i$-closed) for $(2^i, \eta_i)$-close (respectively $(2^i, \eta_i)$-closed).
By Proposition \ref{prop21} and the choice of the $\eta_i$s, we may assume that $\tilde {N}_{2^i}(v)\subseteq \tilde {N}_{2^{i+1}}(v)$ for all $0\le i\le 3$ and all $v\in V(H)$.
Hence, if $W\subseteq V(H)$ is $2^i$-closed in $H$ for some $i\le 4$, then $W$ is $ 2^{4}$-closed in $H$.

Since $|\tilde{N}_{ 2^0}(v)| \ge (1/4 - 3\r)n$ for any $v\in V(H)$, any set of five vertices in $V(H)$ contains two vertices $u,v$ such that $|\tilde{N}_{2^0}(u)\cap \tilde{N}_{2^0}(v)| \ge (5(1/4 - 3\r)n - n)/\binom{5}{2} \ge n/50$.
Thus the number of $(u,v)$-connectors of length $2$ in $H$ is at least
\begin{align*}
n/50\cdot (\eta_0 n^3 - n^2) \cdot (\eta_0 n^3 - 4 n^2) \ge \eta_1 n^7,
\end{align*}
which implies that $u$ and $v$ are $2^1$-close to each other in $H$.
Also we may assume that there are two vertices that are not $2^4$-close to each other, as otherwise $H$ is $2^4$-closed and lemma holds with~$\calP=\{V(H)\}$.

Let $d$ be the largest integer such that
there exist $v_1,\dots, v_{d}\in V(H)$ such that no pair of them are $2^{6-d}$-close to each other. 
Note that $d$ exists by our assumption and $2\le d\le 4$ by our observation. 
Fix such $v_1,\dots, v_{d}\in V(H)$, by Proposition \ref{prop21}, we can assume that any two of them are not $2^{5-d}$-close to each other.
Consider $\tilde N_{2^{5-d}}(v_i)$ for all $i\in [d]$. We have the following facts.
\begin{enumerate}[label={\rm(\roman*)}]
\item Any $v\in V(H)\setminus\{v_1,\dots, v_{d}\}$ must be in $\tilde N_{2^{5-d}}(v_i)$ for some $i\in [d]$. 
\item For any $i\ne j$, $|\tilde N_{2^{5-d}}(v_i)\cap \tilde N_{2^{5-d}}(v_j)|<\a n$.
\end{enumerate}
Note that if (i) fails for some $v \in V(H)$, then $v, v_1,\dots, v_{d}$ contradicts the definition of~$d$.
If (ii) fails with $|\tilde N_{2^{5-d}}(v_1)\cap \tilde N_{2^{5-d}}(v_2)| \ge \a n$ say, then the number of $(v_1, v_2)$-connectors $S$ of length $2^{6-d}$ of the form $S = z \cup S_1 \cup S_2$, where $z \in \tilde N_{2^{5-d}}(v_1)\cap \tilde N_{2^{5-d}}(v_2)$ and for $i=1,2$, $S_i$ is a $(v_i,z)$-connector of length~$2^{5-d}$, is at least 
\[
\frac{1}{ (2^{6-d}4-1)!}\a n \left(\frac{\eta_{5-d}n^{2^{5-d}4-1}}2\right)^2 \ge \eta_{6-d} n^{2^{6-d}4-1}.
\]
This implies that $v_1$ and $v_2$ are $2^{6-d}$-close in $H$, a contradiction. 

For each $i\in [d]$, let $U_i:= ( \tilde N_{2^{5-d}}(v_i)\cup v_i) \setminus \bigcup_{j\in [d]\setminus \{i\}} \tilde N_{2^{5-d}}(v_j)$.
Note that each $U_i$ is $2^{5-d}$-closed in $H$.
Indeed, if there exist $u_1, u_2 \in U_i$ that are not $2^{5-d}$-close to each other, then $\{u_1, u_2\}\cup (\{v_1,\dots, v_{d}\}\setminus\{v_{i}\})$ contradicts the definition of $d$.

Let $U_0 : =V(H)\setminus (U_1\cup\cdots \cup U_{d})$.
By (i) and~(ii), we have $|U_0|\le \binom{d}{2}\a n$.
We partition $U_0$ into $U_{0,1}, \dots, U_{0,d}$ as follows.
For each $v \in U_0$, since $|\tilde N_{2^0}(v)\setminus U_0|\ge (1/4 - 3\r)n - |U_0| \ge d\a n$, there exists $i\in [d]$ such that $|\tilde N_{2^0}(v) \cap U_i| \ge \a n$.
In this case we add $v$ to $U_{0,i}$ (we add $v$ to an arbitrary $U_{0,i}$ if there are more than one such $i$). 

Let $\mathcal{P} = \{V_1, \dots, V_d\}$ be the partition of $V(H)$ such that $V_i = U_i \cup U_{0,i}$ for all $i \in [d]$.
Let $ \eta_{6-d} \ll \eta'' \ll \eta' \ll \eta_{5-d}$.
Consider any $i \in [d]$ and any $v \in U_{0,i}$.
Since $|\tilde N_{2^0}(v)\cap U_i| \ge \a n$, it is easy to see that $U_i \cup v$ is $(2^{5-d} + 1, \eta')$-closed.
Similarly, $U_i \cup \{v,v'\}$ is $(2^{5-d} + 2, \eta'')$-closed for any $v,v' \in U_{0,i}$.
Thus each $V_i$ is $(2^{5-d}+2, \eta'')$-closed, so $2^{4}$-closed by Proposition~\ref{prop21}. 
Further, $|V_i| \geq |U_i| \geq (1/4-3\gamma)n - \binom{d}{2}\alpha n \geq (1/4-4\gamma)n$.
\end{proof}

\subsection{Proof of Lemma~\ref{lem:abs}}

By Lemma~\ref{lem:P}, there is a partition $\mathcal{P}=\{V_1,\dots, V_d\}$ of $V(H)$ such that $d\le 4$ and each $V_i$ is $(16, \eta)$-closed in $H$ and $|V_i| \ge (1/4-4 \gamma) n$.
Our new goal is to find many $(V_i,V_j)$-bridges for some $i \ne j$ so that we can apply Lemma~\ref{lem:bridge} to reduce that number of $(c, \eta')$-closed components of~$V(H)$.
The following lemma is crucial in proving Lemma~\ref{lem:abs}; we defer its proof to Section~\ref{sec:B5}.

For any $U_1,U_2,U_3,U_4 \subseteq V(H)$, we say that a copy $F$ of $K_4^-$ is \emph{of type $U_1U_2U_3U_4$} if there is an ordering $v_1, v_2,  v_3, v_4$ of $V(F)$ such that $v_i \in U_i$ for all $i \in [4]$.

\begin{lemma}\label{lem:B5}
Let $0 < 1/n \ll  \eta' \ll \eps, \eta \ll \gamma \le \rho \ll 1/c$.
Let $H$ be a $3$-graph of order~$n$ with $\delta_2(H) \ge (1/2 - \gamma)n$.
Suppose that there is a partition $\mathcal{P} = \{V_1, \dots, V_d\}$ of $V(H)$ such that $2\leq d \le 4$
 and each $V_i$ is $(c, \eta)$-closed in $H$ and $|V_i| \ge (1/4-4 \gamma )n $.
Suppose that there are not necessarily distinct $i_1, i_2, i_3, i_4\in [d]$ such that $\{i_1, i_2\} \cap \{i_3, i_4\}= \emptyset$ and at least $\eps n^4$ copies of $K_4^-$ are of type $V_{i_1} V_{i_2} V_{i_3} V_{i_4}$.
Then there exist distinct $i, j \in [d]$ such that $V_{i} \cup V_{j}$ is $(3c+1,\eta')$-closed in $H$.
\end{lemma}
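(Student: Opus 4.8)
The hypothesis gives indices $i_1,i_2,i_3,i_4 \in [d]$ with $\{i_1,i_2\}\cap\{i_3,i_4\}=\emptyset$ and at least $\eps n^4$ copies of $K_4^-$ of type $V_{i_1}V_{i_2}V_{i_3}V_{i_4}$. The idea is to convert one such copy of $K_4^-$ together with $(c,\eta)$-closedness of the $V_i$'s into many $(V_i,V_j)$-bridges for a suitable pair $i\ne j$, and then invoke Lemma~\ref{lem:bridge}. Write $W:=\{i_1,i_2,i_3,i_4\}$ as a multiset; since $\{i_1,i_2\}$ and $\{i_3,i_4\}$ are disjoint, $W$ uses at least two distinct indices, so there are two distinct indices $i,j$ each appearing in $W$. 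The goal will be to show $V_i\cup V_j$ is $(3c+1,\eta')$-closed.

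\textbf{Key steps.} First I would fix distinct $i,j\in W$ and, relabelling, assume $i=i_1$ and $j$ appears among $i_2,i_3,i_4$; there are essentially two cases depending on whether $j\in\{i_3,i_4\}$ (the "cross" case, which I expect is the main one) or $j=i_2$. Take a copy $F$ of $K_4^-$ of type $V_{i_1}V_{i_2}V_{i_3}V_{i_4}$ with vertices $w_1\in V_{i_1},\dots,w_4\in V_{i_4}$, chosen vertex-disjoint from whatever else we need (there are $\eps n^4$ choices, so after discarding a lower-order number that meet a bounded set we still have $\Omega(n^4)$). Now fix arbitrary $x\in V_i$ and $y\in V_j$; I want to build an $(x,y)$-connector of length $3c+1$, i.e.\ a set $S$ of size $4(3c+1)-1=12c+3$ with $H[S\cup x]$ and $H[S\cup y]$ each having a $K_4^-$-factor. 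The construction: $S$ will consist of the four vertices $V(F)$ together with three connectors. Say $i=i_1$ and $j=i_3$ (the cross case). Since $V_{i_1}$ is $(c,\eta)$-closed there is an $(x,w_1)$-connector $S_1$ of length $c$; since $V_{i_3}$ is $(c,\eta)$-closed there is a $(y,w_3)$-connector $S_3$ of length $c$. For the remaining two vertices $w_2\in V_{i_2}$ and $w_4\in V_{i_4}$, I need a $K_4^-$-factor-compatible gadget: take a second copy $F'$ of $K_4^-$ of the same type with vertices $w_1',\dots,w_4'$, use $(c,\eta)$-closedness of $V_{i_2}$ and $V_{i_4}$ to get a $(w_2,w_2')$-connector and a $(w_4,w_4')$-connector of length $c$ each, plus connectors pairing up $w_1',w_3'$ with... — actually the cleaner route is: put $S:=V(F)\cup S_1\cup S_3 \cup S_2\cup S_4$ where $S_2$ is a $(w_2,w_2^\ast)$-connector and $S_4$ a $(w_4,w_4^\ast)$-connector of length $c$ into $V_{i_2},V_{i_4}$ respectively, and absorb $w_2^\ast,w_4^\ast$ via a further copy of $K_4^-$ — and then carefully bookkeep lengths. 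Let me instead describe the symmetric, correct version: choose $F$; then $H[x\cup V(F)\setminus w_1]$ should span a copy of $K_4^-$... the point is that $w_1$ and $x$ lie in the same closed class, so we can "swap" $w_1$ for $x$ at the cost of one length-$c$ connector for each of the other three vertices that get displaced. Concretely: $H[S\cup x]$ has a $K_4^-$-factor consisting of $F$ (on $w_1w_2w_3w_4$ — wait, $x\notin$ this) — so rather: the $K_4^-$-factor of $H[S\cup x]$ is $\{F'_x\}\cup(\text{factors of the connector parts})$ where $F'_x$ is a copy of $K_4^-$ on $x$ together with three "fresh" vertices that are the far endpoints of three length-$c$ connectors whose near endpoints are $w_2,w_3,w_4$; and symmetrically for $y$. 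This forces $S=V(F)\cup S_2\cup S_3\cup S_4$ where $S_\ell$ is a connector of length $c$ between $w_\ell$ and a new vertex $z_\ell$, with $\{z_2,z_3,z_4\}\cup\{x\}$ and separately $\{z_2,z_3,z_4\}\cup\{y\}$ — no, $x$ and $y$ must pair with the \emph{same} fresh triple, which only works if $x,y$ are in the class that $w_1$ is in. That is exactly the setup: $i=i_1$, and for $j$ we need $j=i_1$ too, contradiction — so the real mechanism must be a "two-copy" argument à la Lemma~\ref{lem:tran}.

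\textbf{The genuinely right approach (and the main obstacle).} The main obstacle is precisely that a single copy of $K_4^-$ of type $V_{i_1}V_{i_2}V_{i_3}V_{i_4}$ does not by itself give a bridge between two classes; one needs two copies whose types differ by "moving one vertex from one class to another," exactly the input of Lemma~\ref{lem:tran}. So the real plan is: use the $\eps n^4$ copies of type $V_{i_1}V_{i_2}V_{i_3}V_{i_4}$ and the hypothesis $\delta_2(H)\ge(1/2-\gamma)n$ to produce, for some pair of indices, $\eps' n^4$ copies of the "shifted" type as well. Here is where the codegree condition enters: given a copy $F$ of type $V_{i_1}V_{i_2}V_{i_3}V_{i_4}$ with vertices $w_1,\dots,w_4$, the "missing" pair of $K_4^-$ (the non-edge) is some $w_aw_b$; the vertex $w_1$ (say $a=1$) has, in the link of $w_2w_3$ or $w_2w_4$ or $w_3w_4$, codegree $\ge(1/2-\gamma)n$, and since $|V_r|\ge(1/4-4\gamma)n$ for each $r$, a positive fraction of the codegree-neighbours of some edge of $F$ lie in a class $V_r$ with $r\notin$ (or $r\in$) the current type — giving a copy of $K_4^-$ obtained from $F$ by replacing one vertex, of a type that differs from $V_{i_1}V_{i_2}V_{i_3}V_{i_4}$ in exactly one coordinate. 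Averaging over all $\eps n^4$ copies $F$ and over the $\binom{4}{2}$ choices of which vertex to replace and which edge to use, by pigeonhole there is a fixed shifted type, occurring $\Omega_\eps(n^4)$ times, that differs from the original in exactly one class-coordinate; if that shifted coordinate moves a vertex from $V_p$ to $V_q$ with both $p,q$ among the indices used, Lemma~\ref{lem:tran} applied to $V_p,V_q$ (with $d$ there equal to $|\{i_1,i_2,i_3,i_4\}|$, $a_p,a_q$ the appropriate multiplicities, reindexed so $a_1\ge1$) yields that $V_p\cup V_q$ is $(5c+1,\eta')$-closed — hmm, but the statement demands $(3c+1,\eta')$. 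I would therefore need to invoke Lemma~\ref{lem:bridge} directly instead: build the bridges by hand as $S=V(F)\cup S_\beta\cup S_\gamma\cup S_\delta$ with three length-$c$ connectors into the three classes of the \emph{other} three vertices, observing that because $F$ and its shift $F^\ast$ agree on those three vertices and differ only in one vertex (lying in $V_i$ resp.\ $V_j$), the same triple of connectors serves both $x\in V_i$ and $y\in V_j$. This gives length $1+3c$, i.e. $(3c+1)$ as required, and the count is $\Omega_\eps(n^4)\cdot(\eta n^{4c-1})^3 \ge \eta' n^{4(3c+1)+1}$ after dividing by the ordering factor. I expect the bookkeeping of "which vertex of $F$ can be shifted into which class, using which edge's codegree, while keeping everything vertex-disjoint" to be the technically delicate part, and handling the degenerate multiset cases (e.g. $i_1=i_2$ or all four equal is excluded by the disjointness hypothesis, but $i_1=i_3$ is not) to require a short case analysis; the rest is routine.
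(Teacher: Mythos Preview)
Your ``shift'' idea --- given a copy $F$ of $K_4^-$ with vertices $w_1,\dots,w_4$, replace $w_\ell$ by some $v\in L(e_\ell)$ where $e_\ell$ is the opposite edge of $F$, obtaining a length-$1$ bridge $(w_\ell,v,e_\ell)$ whenever $v$ lands in a class different from $V_{i_\ell}$ --- is precisely how the paper's argument \emph{begins}: it is case~(a) of Lemmas~\ref{lem:b1} and~\ref{lem:b2}. The gap in your proposal is that you never justify why $L(e_\ell)$ should meet any class other than $V_{i_\ell}$ in more than a negligible set. Your sentence ``since $|V_r|\ge(1/4-4\gamma)n$ for each $r$, a positive fraction of the codegree-neighbours of some edge of $F$ lie in a class $V_r$ with $r\notin\ldots$'' is not a valid inference: with $d=2$ a single class may have size up to $(3/4+4\gamma)n$, and even with $d=4$ a class may have size $(1/4+12\gamma)n$, while Proposition~\ref{prop:le} only gives $|L(e_\ell)|\ge(1/4-3\gamma/2)n$. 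Nothing you have written rules out $L(e_\ell)\subseteq V_{i_\ell}$ (up to $o(n)$) simultaneously for all three edges $e_\ell$ of $F$ and for almost every copy $F$. (Note also that the codegree $\deg(w_aw_b)\ge(1/2-\gamma)n$ of a \emph{pair} does not by itself produce a shifted $K_4^-$; you need $v\in L$ of a $3$-edge.)

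This is not a bookkeeping detail but the crux of the lemma. In the paper, one sets $X:=V(H)\setminus(V_{i_3}\cup V_{i_4})$, $Y:=V_{i_3}\cup V_{i_4}$, and Lemma~\ref{lem:b2} shows that for each copy $F=xx'y_1y_2$ either (a) some $|L(xx'y_i)\cap X|\ge\gamma n/4$ (your shift succeeds), or (b) many copies of $K_4^3$ sit on three of the four vertices (handled by Lemma~\ref{lem:b1}), or else (c) the triple $xx'y_1$ is \emph{$(\rho,X,Y)$-typical}, a package of tight degree constraints (T1)--(T3). The substantive work is case~(c): Lemma~\ref{clm:case1} finds $\Omega(n^6)$ copies of $K_{3,3}$ in an auxiliary graph on $X$ recording the typical pairs, and uses (T2)--(T3) to extract from each such $K_{3,3}$ many $(X,Y)$-bridges of length $c+1$ (not $1$). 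Only then does Lemma~\ref{lem:bridge} with $p\le c+1$ yield $(2c+p,\eta')$-closedness, hence $(3c+1,\eta')$. Your attempted length-$(3c+1)$ connector $S=V(F)\cup S_\beta\cup S_\gamma\cup S_\delta$ does not fill this gap: even setting aside the size mismatch ($4+3(4c-1)=12c+1\ne 12c+3$), the construction still presupposes that $F$ and a shifted $F^*$ share three vertices with the fourth in different classes --- exactly what fails in case~(c).
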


\begin{proof}[Proof of Lemma~\ref{lem:abs}]
Let $\eta_*$ be as defined before Lemma~\ref{lem:ve}.
Since $\eps \ll \r \ll 1$, we may assume that $\eps \ll \eta_*$.
Let $\eps_0$ be such that $\phi \ll \eps_0 \ll \eps$.
Let $H$ be a $3$-graph of order~$n$ with $\delta_2(H) \ge (1/2 - \gamma) n$ 
and so that $H$ is not $3\gamma$-extremal.

If there exists a vertex $v \in V(H)$ such that there are less than $\eps_0 n^3$ edges $e \in E(H)$ such that $v \in L(e)$, then Lemma~\ref{lem:ve} implies that there exists $V_0 \subseteq V(H)$ of order at most $8 \sqrt[4]{\eps_0} n $ such that $H[V_0]$ contains a $K_4^-$-factor and $H \setminus V_0$ is $(6,\eta_*)$-closed.
Apply Lemma~\ref{lem:abs3} to $H \setminus V_0$ and obtain an absorbing set $W' \subseteq V(H) \setminus V_0$ of order at most $\eps n/2$ such that for any $U \subseteq V(H) \setminus ( V_0 \cup W')$ such that $|U|\le \phi n$ and $|U|\in 4\mathbb{N}$, both $H[W']$ and $H[U\cup W']$ have $K_4^-$-factors.
Lemma~\ref{lem:abs} holds by setting $W : = W' \cup V_0$.

Therefore, we may assume that for every $v \in V(H)$, there are at least $\eps_0 n^3$ edges $e \in E(H)$ such that $v \in L(e)$.
Let $c_4 := 16$ and for $d \in [3]$, let $c_d := 5c_{d+1}+1$.
Let $\eta, \eta_1, \dots, \eta_4$ be such that $\phi  \ll \eta_1 \ll \eta_2 \ll \eta_3 \ll \eta_4 \ll \eta \ll \eps_0$.
Let $d \le 4$ be the smallest integer such that there is a partition $\mathcal{P}=\{V_1,\dots, V_d\}$ of $V(H)$ such that $d\le 4$ and each $V_i$ is $(c_d, \eta_d)$-closed and $|V_i|\ge (1/4 - 4\r)n$.
Note that $d$ exists by Proposition~\ref{prop:Nv} and Lemma~\ref{lem:P}.
If $d = 1$, then Lemma~\ref{lem:abs} holds by Lemma~\ref{lem:abs3}.
We will now show that we obtain a contradiction if $d=2,3,4$ and so $d=1$, as required.

\medskip
\noindent \textbf{Case 1: $d \in \{3,4\}$.}
\medskip

Without loss of generality, assume $|V_1|\le \cdots \le |V_d|$.
Note that we have $(1/2 - 8 \gamma) n\le |V_1| + |V_2|\le 2n/3$. 

First assume that $e(V_1 V_2 V_j)\ge \eta n^3$ for some $j\ge 3$.
Since $\delta_2(H) \ge (1/2 - \gamma)n$, by Proposition~\ref{prop:le}, each $V_1 V_2 V_j$-edge is contained in at least $(1/2 - 3 \gamma)n/2$ copies of $K_4^-$. By averaging, there exists $i\in [d]$ such that there are at least 
\[
\frac14 \eta n^3 \left(\frac12 - 3 \gamma \right) \frac{n}2 \cdot  \frac12 \ge \frac{\eta}{40} n^4
\] 
copies of $K_4^-$ of type $V_1 V_2 V_j V_i$ ($\frac12$ appears because a copy of $K_4^-$ may be counted twice if $i\in \{1, 2, j\}$).
It is easy to see that we can order $1, 2, j, i$ as $i_1, i_2, i_3, i_4$ such that $\{i_1, i_2\} \cap \{i_3, i_4\}= \emptyset$. By Lemma~\ref{lem:B5} and Proposition~\ref{prop21}, there exist distinct $i, j \in [d]$ such that $V_{i} \cup V_{j}$ is $(c_{d-1}, \eta_{d-1})$-closed in $H$. 
Also, by Proposition~\ref{prop21}, every set in $\{V_1,\dots , V_d\} \setminus \{
V_{i} , V_{j} \}$ is $(c_{d-1}, \eta_{d-1})$-closed in $H$. 
Altogether, this contradicts the minimality of~$d$.

So we may assume that $e(V_1 V_2 V_j)< \eta n^3$ for all $j\ge 3$.
Since
\[	\sum_{x\in V_1, y\in V_2} |N(xy)|   = 	2e(V_1 V_1 V_2) + 2e(V_1 V_2 V_2) +  \sum_{j=3}^d e(V_1V_2V_j),  
\]	
it follows that  $|V_1| |V_2| (1/2 - \gamma ) n  \le 2e(V_1 V_1 V_2) + 2e(V_1 V_2 V_2) + 2\eta n^3$. 
As $|V_1| |V_2| \ge (\frac1{16} - 2\r) n^2$, we have $ 2\eta n^3\le 34 \eta |V_1| |V_2| n$.
It follows that $2 e(V_1 V_1 V_2) + 2 e(V_1 V_2 V_2) \ge |V_1| |V_2| (1/2 - \r - 34 \eta) n$.  
Note that $|V_2|\le (\frac34 + 4\r)n/2$ because $|V_3|\ge |V_2| \ge |V_1|\ge (\frac14 - 4\r)n$.
Therefore 
\[
e(V_1 V_1 V_2) + e(V_1 V_2 V_2) \ge \frac12  |V_1| |V_2| \left(\frac12 - 2\r \right) n \ge \frac12  |V_1| |V_2| \, \frac54 \left(\frac{3}{8} + 2\r \right)n \ge \frac54 |V_1| \binom{|V_2|}{2}
\] 
as $\eta \ll \gamma \ll 1$. Hence
\begin{align*}
( |V_2| -1 ) e(V_1 V_1 V_2) + ( |V_1| -1 ) e(V_1 V_2 V_2)  \ge ( |V_1| -1 ) \left( e(V_1 V_1 V_2) +  e(V_1 V_2 V_2) \right)  \ge \frac52 \binom{|V_1|}{2} \binom{|V_2|}{2}.
\end{align*}
By Proposition~\ref{prop:axy}, there are at least $\frac14 \binom{|V_1|}{2} \binom{|V_2|}{2} \ge 4^{-6} n^4 $ copies of $K_4^-$ with two vertices in $V_1$ and two vertices in $V_2$.
Lemma~\ref{lem:B5} and Proposition~\ref{prop21} imply that there exist distinct $i, j \in [d]$ such that $V_{i} \cup V_{j}$ is $(c_{d-1}, \eta_{d-1})$-closed in $H$.
Also, by Proposition~\ref{prop21}, every set in $\{V_1,\dots , V_d\} \setminus \{ V_{i} , V_{j} \}$ is $(c_{d-1}, \eta_{d-1})$-closed in $H$.
Altogether this contradicts the minimality of~$d$. So $d \not \in \{3,4\}$.

\medskip
\noindent \textbf{Case 2: $d =2$.}
\medskip

By Lemma \ref{lem:C}, since $H$ is not $3\gamma$-extremal, $H$ contains at least $\gamma^2 n^3$ $V_1V_1V_2$-edges and at least $\gamma^2 n^3$ $V_1V_2V_2$-edges.
By Lemma~\ref{lem:B5}, we may assume that there are at most $ \eta n^4$ copies $K_4^-$ with two vertices in $V_1$ and two vertices in $V_2$ -- otherwise $V_{1} \cup V_{2}$ is $(c_{1}, \eta_{1})$-closed in $H$.
Thus, for all but at most $2 \sqrt{\eta} n^3$ $V_1V_1V_2$-edges~$e$, $|L(e) \cap V_2 | \le \sqrt{\eta} n $ and so $|L(e) \cap V_1 | \ge n/8 $ by Proposition~\ref{prop:le}.
Therefore, there are at least $(\r^2 n^3 - 2\sqrt{\eta} n^3) (n/8)/3 \ge \gamma^2 n^4/50 \ge \eta n^4$ copies of $K_4^-$ with three vertices in $V_1$ and one vertex in~$V_2$.
So there are at most $\eta n^4$ copies of $K_4^-$ with all vertices in~$V_1$.
(Indeed, otherwise Lemma~\ref{lem:tran} implies that $V_{1} \cup V_{2}$ is $(c_{1}, \eta_{1})$-closed in $H$, a contradiction to the minimality of $d$.)
Proposition~\ref{prop:0.3} implies that $e(H[V_1]) \le (3/10) \binom{|V_1|}3$.
Thus we have
\begin{align*}
e(V_1V_1V_2)\ge \left(\delta_2(H)  - \frac{3}{10} |V_1| \right) \binom{|V_1|}2 \ge \left( (1/2 - \gamma) n  - \frac{3}{10} |V_1| \right)  \binom{|V_1|}2.
\end{align*}
Similarly, we have $e(V_1V_2V_2) \ge ( (1/2 - \gamma) n  -   3|V_2|/10)\binom{|V_2|}2$.
So we have
\begin{align*}
&( |V_2| -1 ) e(V_1 V_1 V_2) + ( |V_1| -1 ) e(V_1 V_2 V_2) \\
\ge & \frac12(|V_1| - 1)(|V_2|-1) \left((1/2 - \gamma) n (|V_1| + |V_2|) - \frac{3}{10} (|V_1|^2 + |V_2|^2) \right) \nonumber \\
= & \frac12(|V_1| - 1)(|V_2|-1)\left( \left(1/5 - \r \right) n^2 + \frac{3}{5} |V_1| |V_2|\right).
\end{align*}
Since $|V_1| |V_2| \le n^2/4$, we have $\left(1/5 - \r \right) n^2 \ge \frac 35|V_1| |V_2| $. Thus, we get
\begin{align*}
&( |V_2| -1 ) e(V_1 V_1 V_2) + ( |V_1| -1 ) e(V_1 V_2 V_2) \ge \frac {12}5 \binom{|V_1|}{2} \binom{|V_2|}{2}.
\end{align*}
By Proposition~\ref{prop:axy}, there are at least $ \frac15 \binom{|X|}2 \binom{|Y|}2 > \eta n^4$ copies of $K_4^-$ in $H$ with two vertices in $V_1$ and two vertices in $V_2$, contradicting our assumption. So $d \not = 2$. 
This completes the proof.
\end{proof}

\subsection{Proof of Lemma~\ref{lem:B5}} \label{sec:B5}

Suppose that $H$ is a $3$-graph satisfying the hypothesis of Lemma~\ref{lem:B5}. Let $Y= V_{i_3} \cup V_{i_4}$ and $X=V(H)\setminus Y$. 
Then there are many copies of $K_4^-$ with two vertices in $X$ and two in $Y$.
Our aim is to show that there are 
many $(X,Y)$-bridges so that we can apply Lemma~\ref{lem:bridge}.

Let $H$ be a $3$-graph of order~$n$.
Let $X,Y$ be two disjoint subsets of~$V(H)$.
Given a constant $\rho>0$,  a set $x x' y $ of three vertices with $x, x'\in X$ and $y \in Y$ is called  \emph{$(\rho, X,Y)$-typical} if
\begin{itemize}
\item[(T1)] $\deg(x x', Y) \ge |Y| - \rho n$,
\item[(T2)] $|N(x y, X)\cap N(x' y, X)| \le \rho n$,
\item[(T3)] $|X| - \rho n \le \deg(x y, X) + \deg(x' y, X)$.
\end{itemize}
In the next lemma, we show that given a copy of $K_4^3$ on $xx'yy'$ with $ x,x' \in X$ and $y,y' \in Y$, then we can find many $(X,Y)$-bridges (containing this $K_4^3$) unless $xx'y$ is typical.

\begin{lemma}\label{lem:b1} Let $\gamma >0$.
Let $H$ be a $3$-graph of order~$n$ with $\delta_2(H) \ge (1/2 - \gamma)n$.
Suppose that $X, Y$ is a bipartition of $V(H)$ and that $x x' y y'$ spans a copy of $K^3 _4$ in  $H$ with $x, x'\in X$ and $y, y'\in Y$.
Then at least one of the following holds:
\begin{enumerate}[label={\rm (\alph*)}]
 \item $x x' y y'$ is contained in at least $\gamma n/4$ $5$-sets that span $(X, Y)$-bridges of length $1$; 
	\item $(1/2 - 4\r)n\le |X|, |Y|\le (1/2 + 4\r)n$ and $x x' y$ is $(9\gamma,X,Y)$-typical.
\end{enumerate}
\end{lemma}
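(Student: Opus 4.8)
The plan is to derive a dichotomy by carefully examining the edges at $y$ and $y'$. Since $xx'yy'$ spans a copy of $K_4^3$, all four triples $xx'y, xx'y', xyy', x'yy'$ are edges of $H$. For a new vertex $z \notin \{x,x',y,y'\}$, I want to count when $\{x,x',y,y',z\}$ spans an $(X,Y)$-bridge of length $1$: concretely, I will look for $z$ such that $zx'yy'$ (or a similar triple of the five vertices) forms a copy of $K_4^-$ whose vertex set is a connector swapping $x$ for some vertex in $Y$, and symmetrically a copy witnessing the reverse swap. More precisely, $S = \{x', y, y', z\}$ is an $(x,y)$-connector of length $1$ provided $H[S \cup x]$ and $H[S \cup y]$ each contain a $K_4^-$; since $H[\{x,x',y,y'\}]$ is already $K_4^3$, it suffices that $z$ extends appropriately. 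The key counting observation: if $z \in N(xx') \cap N(x'y)$ (say), then $\{x,x',y,z\}$ spans a $K_4^-$, and combined with the $K_4^3$ on $xx'yy'$ one assembles the two required factors. So roughly, the number of bad $z$ is controlled by $|N(xx',Y)|$, $\deg(xy,X) + \deg(x'y,X)$ and the overlap $|N(xy,X) \cap N(x'y,X)|$.

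The argument then splits on whether conditions (T1)–(T3) hold for $xx'y$. First I would show: if $\deg(xx',Y) < |Y| - 9\gamma n$, then there are at least $\gamma n /4$ choices of $z \in Y \setminus N(xx')$, and each such $z$ (together with the fact that $zyy'$ and $zxy$, $zx'y$ have large codegree by $\delta_2(H) \geq (1/2-\gamma)n$) completes a $5$-set spanning an $(X,Y)$-bridge of length $1$; this gives (a). Next, if (T1) holds but $|N(xy,X) \cap N(x'y,X)| > 9\gamma n$, then each $z$ in this intersection gives $\{x,x',y,z\}$ spanning a $K_4^-$ entirely in $X \cup \{y\}$ with two $X$-vertices $x, x'$ adjacent to $z$; pairing with the $K_4^3$ on $xx'yy'$ yields a bridge, so we again land in (a) (after checking the count $>9\gamma n \geq \gamma n/4$). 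Finally, if (T1), (T2) hold but (T3) fails, i.e. $\deg(xy,X) + \deg(x'y,X) < |X| - 9\gamma n$, then since $\deg(xy) + \deg(x'y) \geq (1 - 2\gamma)n$ and the overlap in $X$ is small while $\deg(xy,Y) + \deg(x'y,Y) \leq |Y|$, a pigeonhole on $Y$ produces many $z \in Y$ with $z \in N(xy) \cap N(x'y)$, giving $\{x,x',y,z\}$ spanning a $K_4^-$ and hence, via the $K_4^3$, a bridge; so (a) holds once more. In the remaining case all of (T1)–(T3) hold, and I separately need the size bounds $(1/2-4\gamma)n \leq |X|,|Y| \leq (1/2+4\gamma)n$: these follow because (T1) forces $|Y| \geq \deg(xx',Y) \geq (1/2-\gamma)n - \rho n$ type estimates combined with the complementary codegree bound on $X$, or more cleanly from $\delta_2(H) \geq (1/2-\gamma)n$ applied to $xx'$ together with the typicality constraints; if these size bounds fail one instead shows directly that one of (T1)–(T3) must fail, routing us back into case (a).

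The main obstacle I anticipate is bookkeeping the case analysis so that in every branch where a structural condition fails we genuinely produce $\geq \gamma n/4$ distinct $5$-sets spanning bridges of length $1$, and verifying in each branch that the candidate $5$-set really does admit $K_4^-$-factors of both $H[S \cup x]$ and $H[S \cup y]$ — this requires identifying which three of the five vertices form the "triangle" in each $K_4^-$ and checking all the needed triples are edges (some come from the $K_4^3$ on $xx'yy'$, some from the codegree condition, some from membership of $z$ in a specified neighborhood). The constants ($9\gamma$ versus $\gamma/4$ versus $4\gamma$) will need to be chosen so the pigeonhole estimates close, but that is routine once the combinatorial structure of each bridge is pinned down. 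A secondary subtlety is ensuring $z$ avoids the four special vertices $x,x',y,y'$, which only costs an additive constant and is absorbed into the $\gamma n/4$ slack.
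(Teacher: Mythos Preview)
Your (T2)-fails branch is correct and is in fact the paper's opening move: any $z\in N(xy,X)\cap N(x'y,X)\subseteq L(xx'y)\cap X$ makes $(z,y',\{x,x',y\})$ an $(X,Y)$-bridge of length~1. But your (T1)- and (T3)-fails branches both have genuine gaps.

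In the (T1)-fails branch you pick $z\in Y\setminus N(xx')$ and then appeal to ``$zyy'$ and $zxy$, $zx'y$ have large codegree''. Large codegree of the \emph{pairs} $zy,zx,\dots$ tells you nothing about whether the \emph{specific triples} $xyz,x'yz,\dots$ lie in $E(H)$; knowing only $z\notin N(xx')$ gives no usable edge through $z$ at all, so there is no $K_4^-$ to build a bridge from.

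In the (T3)-fails branch you correctly find many $z\in N(xy,Y)\cap N(x'y,Y)$, so $\{x,x',y,z\}$ spans a $K_4^-$. But now $z\in Y$. Run through the possible bridges in the $5$-set $\{x,x',y,y',z\}$: for any choice of $a\in\{x,x'\}$, $b\in\{y,y',z\}$ and $S$ the remaining three vertices, the $4$-set $S\cup b$ always needs one of $x'y'z$, $xy'z$, or $yy'z$ to be an edge, and none of these is guaranteed. So ``pairing with the $K_4^3$'' does not close.

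The paper's fix is to exploit the full symmetry of the $K_4^3$ from the outset rather than case-split on (T1)--(T3). It observes that \emph{each} of $|L(xx'y)\cap X|$, $|L(xx'y')\cap X|$, $|L(xyy')\cap Y|$, $|L(x'yy')\cap Y|$ being $\ge\gamma n/4$ yields (a) directly (e.g.\ $z\in L(xyy')\cap Y$ gives the bridge $(x',z,\{x,y,y'\})$). Assuming all four are small, Proposition~\ref{prop:le} gives four inequalities of the form $\deg(xx',X)+\deg(xy,X)+\deg(x'y,X)\le |X|+\gamma n/2$ (and the $Y$-side analogues). Summing all four and using $\delta_2(H)\ge(1/2-\gamma)n$ forces $\deg(xx',X)+\deg(yy',Y)\le 3\gamma n$, which immediately gives the size bounds on $|X|,|Y|$ and (T1); then (T3) comes from noting that the first inequality cannot be too slack (else the sum drops below $4\delta_2(H)$), and (T2) is just the $L(xx'y)\cap X$ bound already assumed. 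Your size-bound derivation is also incomplete: (T1) alone says $\deg(xx',Y)\ge |Y|-\rho n$, which does not lower-bound $|Y|$; the paper gets $|Y|\ge(1/2-4\gamma)n$ from $\deg(yy',X)\ge\delta_2(H)-\deg(yy',Y)$ together with $\deg(yy',Y)\le 3\gamma n$.
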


\begin{proof}
First note that each $z\in L(x x' y)\cap X$ gives an $(X, Y)$-bridge $(z, y', \{x, x', y\})$ of length 1 because both $z x x' y$ and $x x' y y'$ span copies of $K_4^-$.
Thus we may assume that
\begin{align} \label{eq:r4}
|L(x x' y)\cap X|, |L(x x' y')\cap X|, |L(x y y')\cap Y|, |L(x' y y')\cap Y|< \gamma n/4
\end{align}
or else (a) holds. Since $xx'y\in E(H)$, this implies that $|N(x y, X) \cap N(x' y, X)| \le | L(xx'y)\cap X| \le \r n/4$.
Furthermore, by Proposition \ref{prop:le}, we have
\begin{equation} \label{eq:4in1}
\begin{split}
&\deg(x x', X) + \deg(x y, X) + \deg(x' y, X) \le |X| + \gamma n/2, \\
&\deg(x x', X) + \deg(x y', X) + \deg(x' y', X) \le |X| +  \gamma n/2,  \\
&\deg(x y, Y) + \deg(x y', Y) + \deg(y y', Y) \le |Y| +  \gamma n/2, \\
&\deg(x' y, Y) + \deg(x' y', Y) + \deg(y y', Y) \le |Y| +  \gamma n/2.
\end{split}
\end{equation}
Let $D:= 2\deg(x x', X) + 2\deg(y y', Y) + \deg(x y) + \deg(x y') + \deg(x'y) + \deg(x'y')$. 
By summing all the inequalities in \eqref{eq:4in1}, we derive that $D\le 2|X|+ 2|Y| +  2\gamma n$.
Since $\delta_2(H)\ge (1/2-\r)n$ and $|X|+ |Y|=n$, it follows that
\begin{align}\label{eq:gap}
 \deg(x x', X) + \deg(y y', Y) \le 3\r n.
\end{align}
Note that $|X| \ge \deg(y y', X) \ge (1/2 - \gamma ) n - \deg(y y', Y) \ge (1/2 - 4\gamma)n$.
Similarly we have $|Y|\ge (1/2 - 4\r)n$.
Therefore
\begin{align}
(1/2 - 4\r)n \le |X|, |Y| \le (1/2 + 4\r) n \label{eqn:b11}
\end{align}
and 
\[
	\deg(xx',Y) \overset{\eqref{eq:gap}}{\ge} (1/2 - \r)n - 3\r n \ge  |Y| - 8\r n. 
\]
If $ \deg(x x', X) + \deg(x y, X) + \deg(x' y, X) < |X| - 11 \gamma n/2$ , then we replace the first inequality in \eqref{eq:4in1} and obtain that 
\[
\deg(x y) + \deg(x y') + \deg(x'y) + \deg(x'y') \le D< 2|X|+ 2|Y| - 4\gamma n \le 4\delta_2(H),
\]
a contradiction. We thus have that $ \deg(x x', X) + \deg(x y, X) + \deg(x' y, X) \ge |X| - 11 \gamma n/2$.
Together with  \eqref{eq:gap}, it follows that $\deg(x y, X) + \deg(x' y, X) \ge |X| - 9\gamma n$. We thus deduce that $x x' y$ is $(9\gamma,X,Y)$-typical and (b) holds. 
\end{proof}

Now we prove a similar lemma in which we assume $xx'yy'$ spans a copy of $K_4^-$ -- this requires a more careful analysis of the neighbourhoods in the proof.

\begin{lemma}\label{lem:b2} Let $\gamma >0$.
Let $H$ be a $3$-graph of order~$n$ with $\delta_2(H) \ge (1/2 - \gamma)n$.
Suppose that $X, Y$ is a bipartition of $V(H)$.
Let $x,x' \in X$ and $y_1,y_2 \in Y$ such that $x y_1 y_2, x x' y_1, x x' y_2 \in E(H)$. 
Then at least one of the following holds:
\begin{enumerate}[label={\rm (\alph*)}]
	\item $x x' y_1 y_2$ is contained in at least $\gamma n/4$ $5$-sets that span $(X, Y)$-bridges of length~$1$; 
	\item there are at least $\gamma n/4$ copies $K$ of $K_4^3$ such that $|V(K) \cap X| = 2 = |V(K) \cap Y|$ and $|V(K) \cap \{x,x',y_1,y_2\}| =3$;
	\item $(1/2 - 15\r/4)n\le |X|, |Y|\le (1/2 + 15\r/4)n$ and $x x' y_1$ is $(26\gamma,X,Y)$-typical.
\end{enumerate}
\end{lemma}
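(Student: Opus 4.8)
The plan is to mimic the proof of Lemma~\ref{lem:b1}, but to track more carefully the fact that $xx'y_1y_2$ spans only a copy of $K_4^-$ rather than a copy of $K_4^3$: the missing edge forces us to replace one of the four triple-degree inequalities coming from Proposition~\ref{prop:le}. First I would note that if many vertices $z \in L(xx'y_1) \cap X$ exist, then each such $z$ gives the $(X,Y)$-bridge $(z, y_2, \{x, x', y_1\})$ of length $1$ (since $zxx'y_1$ spans a copy of $K_4^-$ and so does $xx'y_1y_2$), so we may assume $|L(xx'y_1)\cap X| < \gamma n/4$, and similarly for $L(xx'y_2)\cap X$ and for $L(xy_1y_2)\cap Y$ -- note $xy_1y_2 \in E(H)$ but $x'y_1y_2$ need not be an edge, so we cannot invoke the fourth term directly. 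The quantity $|L(x'y_1y_2)\cap Y|$ is instead controlled using conclusion (b): by Proposition~\ref{prop:le}, $|L(x'y_1y_2)\cap Y| \ge (\deg(x'y_1,Y) + \deg(x'y_2,Y) + \deg(y_1y_2,Y) - |Y|)/2$, and for each such $z \in L(x'y_1y_2)\cap Y$ together with the edge structure on $\{x,x',y_1,y_2\}$ one obtains a copy of $K_4^3$ on $\{x', y_1, y_2, z\}$ (using that $x'y_1z, x'y_2z, y_1y_2z$ are edges, and $x'y_1y_2$ need not be -- wait, this only gives a $K_4^-$; more care is needed here, counting instead copies of $K_4^3$ that use three of the four special vertices). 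So I would assume $|L(x'y_1y_2)\cap Y| < \gamma n/4$ as well, or else (b) holds.

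Granting these four bounds, I would write down the available Proposition~\ref{prop:le} inequalities: the three inside $X$ using edges $xx'y_1$, $xx'y_2$ (giving $\deg(xx',X)+\deg(xy_i,X)+\deg(x'y_i,X)\le |X|+\gamma n/2$ for $i=1,2$), the one inside $Y$ using edge $xy_1y_2$ (giving $\deg(xy_1,Y)+\deg(xy_2,Y)+\deg(y_1y_2,Y)\le |Y|+\gamma n/2$), and -- in place of the missing $x'y_1y_2$ inequality -- the bound $\deg(x'y_1,Y)+\deg(x'y_2,Y)+\deg(y_1y_2,Y)\le |Y|+\gamma n/2$, which is exactly the assumption $|L(x'y_1y_2)\cap Y|<\gamma n/4$ rearranged via Proposition~\ref{prop:le}. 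Summing these four inequalities and using $\delta_2(H)\ge(1/2-\gamma)n$, $|X|+|Y|=n$, I expect to obtain $\deg(xx',X)+\deg(y_1y_2,Y)\le c\gamma n$ for some small constant $c$ (slightly worse than the $3\gamma n$ of Lemma~\ref{lem:b1} because one of our inequalities is itself only an inequality, not exact). This yields $|X|\ge \deg(y_1y_2,X)\ge(1/2-\gamma)n-\deg(y_1y_2,Y)\ge(1/2-(1+c)\gamma)n$ and symmetrically for $|Y|$, hence $(1/2-15\gamma/4)n\le|X|,|Y|\le(1/2+15\gamma/4)n$.

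Finally, to verify that $xx'y_1$ is $(26\gamma, X, Y)$-typical: (T1) $\deg(xx',Y)\ge(1/2-\gamma)n - \deg(xx',X)\ge|Y|-O(\gamma)n$ follows from the bound on $\deg(xx',X)$ above; (T2) $|N(xy_1,X)\cap N(x'y_1,X)|\le|L(xx'y_1)\cap X|<\gamma n/4$ since $xx'y_1\in E(H)$; and (T3) $\deg(xy_1,X)+\deg(x'y_1,X)\ge|X|-O(\gamma)n$, which one gets by arguing as in Lemma~\ref{lem:b1}: if $\deg(xx',X)+\deg(xy_1,X)+\deg(x'y_1,X)$ were smaller than $|X|$ by more than a constant multiple of $\gamma n$, then replacing the corresponding inequality in the sum above would contradict the four-fold lower bound $4\delta_2(H)$ coming from the left-hand sides. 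Tracking the constants carefully through this should land at $26\gamma$. The main obstacle is bookkeeping: one must handle the asymmetry caused by the missing edge $x'y_1y_2$ -- deciding exactly which copies of $K_4^3$ (using three of $\{x,x',y_1,y_2\}$) are produced in conclusion (b), and making sure the replacement inequality is legitimately available -- and then push the (looser) constants through the degree inequalities without the clean cancellations that made Lemma~\ref{lem:b1} work.
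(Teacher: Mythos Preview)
Your approach has a genuine gap at the ``fourth inequality''. You want to replace the missing edge $x'y_1y_2$ by bounding $|L(x'y_1y_2)\cap Y|<\gamma n/4$ and then invoking Proposition~\ref{prop:le} to get
\[
\deg(x'y_1,Y)+\deg(x'y_2,Y)+\deg(y_1y_2,Y)\le |Y|+\gamma n/2.
\]
But Proposition~\ref{prop:le} applies only to \emph{edges}, and $x'y_1y_2$ is precisely the triple that need not be one (if it were, $xx'y_1y_2$ would span $K_4^3$ and Lemma~\ref{lem:b1} would already apply). When $x'y_1y_2\notin E(H)$, one has $L(x'y_1y_2)=N(x'y_1)\cap N(x'y_2)\cap N(y_1y_2)$, and the best bound from $|L(x'y_1y_2)\cap Y|<\gamma n/4$ is only
\[
\deg(x'y_1,Y)+\deg(x'y_2,Y)+\deg(y_1y_2,Y)\le 2|Y|+\gamma n/4.
\]
If you sum your two $X$-inequalities, your third (valid) $Y$-inequality, and this weak fourth, you obtain merely $\deg(xx',X)+\deg(y_1y_2,Y)\le |Y|/2+O(\gamma n)$, which does not force $\deg(xx',X)$ small. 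Hence your derivation of (T1) via ``$\deg(xx',Y)\ge (1/2-\gamma)n-\deg(xx',X)$'' breaks down.

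The paper handles this differently. For each $i\in\{1,2\}$ it uses conclusion~(b) not on $L(x'y_1y_2)$ but on the triple intersection $N(xx')\cap N(xy_i)\cap N(x'y_i)$ inside~$Y$: any $z'$ in this set makes $xx'y_iz'$ a genuine $K_4^3$ with two vertices in each of $X,Y$ and three of the four special vertices, so its size is $<\gamma n/4$. This yields the (weak) inequality $\deg(xx',Y)+\deg(xy_i,Y)+\deg(x'y_i,Y)<2|Y|+\gamma n/4$, which combined with the $X$-inequality gives $|Y|\ge(1/2-15\gamma/4)n$; an analogous argument on the edge $xy_1y_2$ gives $|X|\ge(1/2-15\gamma/4)n$. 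To establish (T1) the paper then does an extra counting step: letting $n_\ell$ be the number of $y\in Y$ lying in exactly $\ell$ of $N(xx'),N(xy_i),N(x'y_i)$, one has $n_3<\gamma n/4$ from~(b) and a matching lower bound on $n_1+2n_2+3n_3$, forcing $n_0+n_1\le 15\gamma n/2$. Running this for both $i=1,2$ and comparing with $\overline N(xx',Y)$ shows that if $|\overline N(xx',Y)|\ge 16\gamma n$ then $|N(xy_1,Y)\cap N(xy_2,Y)|\ge\gamma n$, which lands in $L(xy_1y_2)\cap Y$ and triggers~(a). Thus $\deg(xx',Y)\ge|Y|-16\gamma n$, and (T3) follows from this together with the bound on $n_3$. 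This $n_\ell$-counting is the missing idea in your proposal.
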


\begin{proof}
We assume that neither (a) nor (b) holds.
Fix $i\in \{1,2\}$ and consider the edge $xx'y_i$.
Note that each $z\in L(x x' y_i)\cap X$ gives an $(X, Y)$-bridge $(z, y_j, \{x, x' ,y_i\})$ of length~1, where $j= 3-i$.
Thus $|L(x x' y_i)\cap X| < \gamma n/4$ or else (a) holds.
Therefore by Proposition \ref{prop:le}, we have
\begin{align}
&\deg(x x', X) + \deg(x y_i, X) + \deg(x' y_i, X) \le |X| + \gamma n/2 . \label{eq:e11}
\end{align}

On the other hand, each $z'\in N(x x', Y)\cap N(x y_i, Y) \cap N(x' y_i, Y)$ yields a copy of $K_4^3$ on $x x' y_i z'$.
Thus
\begin{align}
	 \gamma n/4 & > |N(x x', Y)\cap N(x y_i, Y) \cap N(x' y_i, Y)|   \label{eq:NY} \\
	 & \ge \deg(x x', Y) + \deg(x y_i, Y) + \deg(x' y_i, Y) - 2|Y|  \label{eq:e21}
\end{align}
or else (b) holds. 
By combining \eqref{eq:e11} and \eqref{eq:e21}, 
\begin{align}
	\deg(xx') + \deg(xy_i) + \deg( x' y_i) \le |X| + 2|Y| + 3\gamma n/4 = n + |Y| + 3\gamma n/4. \label{eq:e31}
\end{align}
Since $\delta_2(H) \ge (1/2 - \gamma)n$, it follows that $|Y|\ge (1/2 - 15\r/4)n$.

By a similar argument on the edge $xy_1y_2$, we deduce that $|X|\ge (1/2 - 15\r/4)n$.
Therefore, 
\begin{align}
(1/2 - 15\r/4)n \le |X|, |Y| \le (1/2 + 15\r/4)n. \label{eqn:b2XY}
\end{align}

We now bound $\deg(x x', Y) + \deg(x y_i, Y) + \deg(x' y_i, Y)$, $i\in \{1, 2\}$, from below.
If $\deg(x x', Y) + \deg(x y_i, Y) + \deg(x' y_i, Y)< 2 |Y| - 29 \gamma n/4$, then together with~\eqref{eq:e11} this gives that 
\begin{align*}
	\deg(xx') + \deg(xy_i) + \deg( x' y_i) & < |X| + 2|Y| - 27\gamma n/4  = n + |Y| - 27\gamma n/4 
	\overset{\eqref{eqn:b2XY}}{\le} 3(1/2 - \gamma) n, 
\end{align*}
implying that $\delta_2(H) < (1/2 - \gamma)n$, a contradiction. 
Hence, we have
\begin{align}
2|Y|- 29 \gamma n/4 \le \deg(x x', Y) + \deg(x y_i, Y) + \deg(x' y_i, Y). \label{eqn:b2eqn2}
\end{align}
For $0\le \ell \le 3$, let $n_{\ell}$ be the number of vertices of $Y$ that belong to $\ell$ of the sets $N(x x')$, $N(x y_i)$, $N(x' y_i)$.
Clearly, $n_0 + n_1 + n_2 + n_3 = |Y|$ and $n_1 + 2n_2 + 3n_3= \deg(x x', Y) + \deg(x y_i, Y) + \deg(x' y_i, Y)$.
We may assume that $n_3 \le \gamma n /4$ otherwise (b) holds.
By~\eqref{eqn:b2eqn2},
\begin{align*}
2|Y|- 29 \gamma n/4 & \le n_1 + 2n_2 + 3n_3 
= 2(n_0 + n_1 + n_2 + n_3) -2n_0 - n_1 + n_3 \\
& \le 2 |Y| -2n_0 - n_1 + \gamma n /4,
\end{align*}
which implies that $2n_0 + n_1  \le 15 \gamma n /2$.
In particular, 
\begin{align} \label{eq:Nbar}
|\overline{N}(x x', Y) \setminus (N(x y_i, Y)\cap N(x' y_i, Y))|\le n_0 + n_1 \le 15\r n/2,
\end{align}
where $\overline{N}(x x', Y) :=Y\setminus {N}(x x', Y)$. Applying \eqref{eq:Nbar} twice with 
$i=1,2$, we obtain
\[
|\overline{N}(x x', Y) \setminus (N(x y_1, Y)\cap N(x' y_1, Y))|+ |\overline{N}(x x', Y) \setminus (N(x y_2, Y)\cap N(x' y_2, Y))| \le 15\r n.
\]
If $|\overline{N}(x x', Y)|\ge  16 \gamma n $, then we get
\begin{align*}
| N(x y_1, Y) \cap N(x' y_1, Y) \cap N(x y_2, Y)\cap N(x' y_2, Y) | \ge \r n.
\end{align*}
In particular, $|N(x y_1, Y)\cap N(x y_2, Y)| \ge \r n$.
Since $N(x y_1, Y)\cap N(x y_2, Y) \subseteq L(x y_1 y_2)\cap Y$, this implies (a), a contradiction. 
Thus we assume that
\begin{align}
|\overline{N}(x x', Y)| <  16 \gamma n. \label{eqn:b2T1}
\end{align}

We further have
\begin{align*}
	\deg(xy_1,Y) + \deg(x' y_1,Y) & \le |Y| + | N(xy_1) \cap N(x'y_1) \cap Y| \\
	& \le |Y| + | N(xy_1) \cap N(x'y_1) \cap N(xx') \cap Y| + |Y\setminus N(xx')| \\
	& \overset{\mathclap{\eqref{eqn:b2T1}, \eqref{eq:NY}}}{\le} |Y| + \gamma n/4 + 16 \gamma n   \overset{\eqref{eqn:b2XY}}{\le} (1/2+20 \gamma) n  .
\end{align*}
Therefore
\begin{align}
	\deg(xy_1,X) + \deg(x' y_1,X)& \ge 2(1/2 - \gamma) n- (\deg(xy_1,Y) + \deg(x' y_1,Y)) \nonumber \\
	& \ge (1/2 - 22 \gamma) n  \overset{\eqref{eqn:b2XY}}{\ge} |X| - 26 \gamma n . \nonumber 
\end{align}
So (T3) holds (with $\rho =26\gamma$).
On the other hand, we have $|N(x y_1, X)\cap N(x' y_1, X)| \le \gamma  n/4$ because $|L(x x' y_1)\cap X| \le \gamma n / 4$ and $N(x y_1, X)\cap N(x' y_1, X) \subseteq L(x x' y_1)\cap X$.
So (T2) holds. Finally, (\ref{eqn:b2T1}) implies that
$\deg (xx',Y) \geq |Y|-16\gamma n$ so (T1) holds.
We thus deduce that $x x' y_1$ is $(26\gamma,X,Y)$-typical and (c) holds.
\end{proof}

In the next lemma, we show how to use $(\rho, X,Y)$-typical edges to find $(X,Y)$-bridges.
\begin{lemma}\label{clm:case1}
Let $0 < 1/n \ll \e' \ll \eps \ll \eta \ll \gamma \le \rho \ll 1/c$.
Suppose that $H$ is a $3$-graph of order~$n$ such that $\delta_2(H) \ge (1/2 - \gamma)n$. Let $X, Y$ be a partition $V(H)$ such that $(1/2- \rho) n \le |X| \le (1/2 + \rho) n $.
Further assume $X=V_{i_1}\cup V_{i_2}$ and $Y=V_{i_3}\cup V_{i_4}$ such that 
\begin{itemize}
\item[{\rm 1)}] either $V_{i_1} = V_{i_2} $ or $V_{i_1}\cap V_{i_2} = \emptyset$, and either
$V_{i_3} = V_{i_4} $ or $V_{i_3}\cap V_{i_4} = \emptyset$;

\item[{\rm 2)}] for each $j\in [4]$,  $V_{i_j}$ is $(c,\eta)$-closed in $H$ and has size at least $(1/4 - 4 \r) n$.
\end{itemize}
Suppose that there are at least $\eps n^4 $ copies $xx'yy'$ of $K_4^-$ in $H$ such that $x,x' \in X$, $y \in V_{i_3}$, $y' \in V_{i_4}$ 
 and $xx'y$ is $(\rho,X,Y)$-typical. Then at least one of the following holds:
\begin{enumerate}[label={\rm (\alph*)}]
	\item there are at least $\e' n^{4c+5}$ $(X, Y)$-bridges of length $c+1$ in $H$;
	\item $V_{i_1}\cap V_{i_2} = \emptyset$ and there are at least $\e' n^5$ $(V_{i_1}, V_{i_2})$-bridges of length $1$ in $H$.
\end{enumerate}
\end{lemma}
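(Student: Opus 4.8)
The plan is to reduce Lemma~\ref{clm:case1} to one of two inequalities about the link sets $L_H(T)$ of $3$-sets $T\subseteq V(H)$, and then to establish one of them by exploiting typicality. Note that if $T$ is a $3$-set and $u,v$ are distinct vertices of $L_H(T)\setminus T$, then $T$ is a length-$1$ $(u,v)$-connector. Hence, if $\sum_{T}|L_H(T)\cap X|\,|L_H(T)\cap Y|\ge\tau n^5$ for some constant $\tau$ with $\eps'\ll\tau\ll\eps$ (sum over $3$-sets $T$), then (a) follows: for each such $T$ pick $v_1\in L_H(T)\cap X$, $v_2\in L_H(T)\cap Y$; since $X=V_{i_1}\cup V_{i_2}$ with both parts $(c,\eta)$-closed, $v_1$ lies in a $(c,\eta)$-closed part $V_{i_a}$, so choose $x^\ast\in V_{i_a}$ ($\ge(1/4-5\gamma)n$ choices) and a length-$c$ $(x^\ast,v_1)$-connector $S_1$ disjoint from the $O(1)$ chosen vertices ($\ge\tfrac12\eta n^{4c-1}$ choices); then $S:=S_1\cup\{v_1\}\cup T$ has size $4(c+1)-1$ and $(x^\ast,v_2,S)$ is an $(X,Y)$-bridge of length $c+1$ (split $S\cup\{x^\ast\}$ into $S_1\cup\{x^\ast\}$ and $T\cup\{v_1\}$, and $S\cup\{v_2\}$ into $S_1\cup\{v_1\}$ and $T\cup\{v_2\}$). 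There are $\ge\tfrac{1}{10}\tau\eta n^{4c+5}$ such tuples $(T,v_1,v_2,x^\ast,S_1)$ and a given bridge of length $c+1$ arises from at most $\mathrm{poly}(c)$ of them, so (a) holds. Symmetrically, if $V_{i_1}\cap V_{i_2}=\emptyset$ and $\sum_T|L_H(T)\cap V_{i_1}|\,|L_H(T)\cap V_{i_2}|\ge\tau n^5$, the analogous choices make the $3$-sets $T$ themselves length-$1$ $(V_{i_1},V_{i_2})$-connectors, giving (b). So it suffices to prove one of these two inequalities.

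The gadget producing these link sets is as follows. As $(\rho,X,Y)$-typicality of $xx'y$ is symmetric in $x,x'$ and each of the $\ge\eps n^4$ copies $xx'yy'$ yields a typical triple lying in $\le n$ copies, there are $\ge\eps n^3$ typical triples, hence $\ge\eps n^2$ typical pairs $\{x,x'\}$. For a typical triple $xx'y$: by (T1), $xx'z\in E(H)$ for all but $\le\rho n$ vertices $z\in Y$; for each such $z$, Proposition~\ref{prop:le} gives $|L_H(xx'z)|\ge(1/4-2\gamma)n$, and $T_z:=\{x,x',z\}$ is a length-$1$ connector between any two distinct vertices of $L_H(T_z)\setminus T_z$. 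Fixing $\eta\ll\delta\ll\gamma$, call such a $z$ \emph{crossing} if $\min\{|L_H(T_z)\cap X|,|L_H(T_z)\cap Y|\}\ge\delta n$, and \emph{$X$-pure} or \emph{$Y$-pure} according to whether $|L_H(T_z)\cap Y|<\delta n$ or $|L_H(T_z)\cap X|<\delta n$ (one of the three holds since $|L_H(T_z)|>2\delta n$, and a pure $z$ has all but $\le\delta n$ of $L_H(T_z)$ on one side). Since $|N_H(xx',Y)|\ge|Y|-\rho n\ge n/3$ and the label of $T_z$ depends only on $\{x,x'\}$ and $z$, every typical pair has $\ge\delta n$ crossing $z$'s, or $\ge n/8$ $X$-pure $z$'s, or $\ge n/8$ $Y$-pure $z$'s; thus a constant fraction of the typical pairs lies in one fixed regime.

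If a constant fraction of the typical pairs have $\ge\delta n$ crossing $z$'s, then the triples $T_z$ arising are $\Omega(n^3)$ in number (distinct, since the pair and $z$ are recovered from $T_z$ once one knows $|T_z\cap X|=2$) and each contributes $\ge\delta^2 n^2$ to $\sum_T|L_H(T)\cap X|\,|L_H(T)\cap Y|$, so the first inequality and (a) follow. If instead a constant fraction of the typical pairs have $\ge n/8$ $X$-pure $z$'s, and $V_{i_1}\cap V_{i_2}=\emptyset$ with $L_H(T_z)\cap X$ (of size $\ge(1/4-3\gamma)n$) meeting both $V_{i_1}$ and $V_{i_2}$ in $\Omega(n)$ vertices for a constant fraction of those $z$'s, then these $T_z$ contribute $\Omega(n^2)$ each to $\sum_T|L_H(T)\cap V_{i_1}|\,|L_H(T)\cap V_{i_2}|$ and (b) follows. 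Thus we may assume we are in the remaining situation: for most typical pairs and most of their pure $z$'s, the large set $L_H(T_z)$ is confined, up to $\le\delta n$ vertices, to a single part $V_{i_j}$ — one inside $X$ if $z$ is $X$-pure, one inside $Y$ if $z$ is $Y$-pure — so that $T_z$ only links two vertices of one closed class and is by itself useless.

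This confined case is the main obstacle. The plan is to \emph{descend}: each copy of $K_4^-$ on $\{x,x',z,w\}$ so produced (with $w\in L_H(T_z)$) has, among its three edges, one edge $e$ meeting the side opposite to $w$ in exactly one vertex; applying the gadget to $e$ produces a fresh $3$-set with link set of size $\ge(1/4-2\gamma)n$, to which the crossing/pure classification is reapplied. Since every link set met has size $\ge(1/4-2\gamma)n$ whereas each part $V_{i_j}$ has size at most $(1/4+O(\gamma))n$ and $|X|,|Y|=(1/2\pm\rho)n$, a short case analysis of which parts a link set can simultaneously avoid shows that after boundedly many descent steps one reaches a link set straddling $X$ and $Y$ (giving the first inequality), or — when $V_{i_1}\cap V_{i_2}=\emptyset$ — one straddling $V_{i_1}$ and $V_{i_2}$ (giving the second); in the extreme sub-case where every link set met is forced to be essentially an entire part $V_{i_a}$, that part has $e(H[V_{i_a}])>0.3\binom{|V_{i_a}|}{3}$, and Proposition~\ref{prop:0.3} applied inside $V_{i_a}$, together with the typical copies supplied by the hypothesis, produces the required crossing configurations. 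Carrying out this descent precisely — determining exactly which parts each link set can avoid and checking that the constants survive each step — is where the real work lies; the rest is the composition and counting of the first paragraph.
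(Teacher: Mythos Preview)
Your reduction to the two sums $\sum_T|L_H(T)\cap X|\,|L_H(T)\cap Y|$ and $\sum_T|L_H(T)\cap V_{i_1}|\,|L_H(T)\cap V_{i_2}|$ is correct, and the crossing/pure classification of the sets $T_z=\{x,x',z\}$ is a reasonable first move. But the confined case --- which you yourself flag as ``where the real work lies'' --- is a genuine gap, and the descent you sketch does not close it. Descending from $T_z$ to an edge of a $K_4^-$ on $\{x,x',z,w\}$ produces a new edge (say $xzw$ with $x,w\in X$, $z\in Y$) that is structurally indistinguishable from $T_z$: it again has two vertices in $X$ and one in $Y$, its link set again has size $\ge(1/4-2\gamma)n$, and nothing prevents it from being $X$-pure once more. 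There is no decreasing invariant. Your ``extreme sub-case'' claim that persistent confinement to a part $V_{i_a}$ forces $e(H[V_{i_a}])>0.3\binom{|V_{i_a}|}{3}$ is also unjustified: $L_H(T_z)\subseteq V_{i_a}$ witnesses edges through pairs of $T_z$, not edges internal to $V_{i_a}$. Most tellingly, your whole argument uses only (T1); the conditions (T2) and (T3), which together say that $N(xy,X)$ and $N(x'y,X)$ nearly \emph{partition} $X$, are never invoked, and without them you cannot manufacture the rigidity that resolves the confined case.

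The paper's route is entirely different and hinges on (T2)--(T3). Instead of varying $z$ over $N(xx',Y)$, one fixes a pair $(y,y')$ with many typical companions $xx'$, forms the graph $G$ on $X$ whose edges are these typical pairs, and extracts many copies of $K_{3,3}$ in $G$ via Erd\H{o}s's theorem. A $K_{3,3}$ on $\{x_1,x_2,x_3\}\cup\{x_1',x_2',x_3'\}$ yields three typical triples $x_1x_1'y,\,x_2x_1'y,\,x_3x_1'y$ sharing $x_1'$ and $y$; by (T2)--(T3), each $N(x_iy,X)$ is essentially $X\setminus N(x_1'y,X)$, so the three sets $N(x_iy,X)$ pairwise nearly coincide and $|N(x_iy,X)\cap N(x_jy,X)|\ge 7\rho n$ for $i\ne j$. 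A short dichotomy on whether some $\deg(x_ix_j,Y)\ge 4\rho n$ then either builds an $(X,Y)$-bridge of length $c+1$ directly (via $L(x_1'x_ix_j)\cap Y$ and a length-$c$ connector inside $V_{i_4}$), or forces $|L(x_1x_2x_3)\cap X|\ge|X|-18\rho n$; the latter gives (b) immediately when $V_{i_1}\cap V_{i_2}=\emptyset$, and when $X$ is itself $(c,\eta)$-closed it is combined with $|L(x_1x_2y)\cap X|\ge\rho n$ and a length-$c$ connector in $X$ to give (a). The $K_{3,3}$ gadget, coupling several typical triples through a common $(x_1',y)$, is precisely the missing idea.
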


\begin{proof}
Define $\eps _0$ so that $\eps ' \ll \eps _0 \ll \eps$. 

Let $\mathcal{Y}$ be the set of pairs $(y,y')$ such that (i) $y \in V_{i_3}$ and $y' \in V_{i_4}$; and (ii) there are at least ${\eps} n^2$ pairs $x,x' \in X$ such that $xx'yy'$ spans a copy of $K_4^-$ and $xx'y$ is $(\rho,X,Y)$-typical.
Note that $|\mathcal{Y}| \ge {\eps} n^2  $ -- otherwise there are at most $|\mathcal{Y}|\binom{n}{2} + \binom{n}{2} \e n^2 < \e n^4$ copies of such $F$, a contradiction.

Fix $(y,y') \in \mathcal{Y}$.
Let $G$ be the graph on~$X$ such that $xx' \in E(G)$ if $xx'yy'$ spans a copy of $K_4^-$ and $xx'y$ is $(\rho,X,Y)$-typical.
Thus $e(G) \ge {\e} n^2$. Applying a classical result of Erd\H{o}s~\cite{Erdos}, we can find at least $\eps _0 n^6$ copies of $K_{3,3}$ in $G$. Fix a copy of~$K_{3,3}$ and label its vertices as $\{x_1, x_2, x_3,  x_1', x_2', x_3'\}$, where $x_i x_j' \in E(G)$ for all $i,j\in [3]$.

\medskip
\noindent\textbf{Claim.}
Let $W := \{ x_1,x_2,x_3, x'_1,x'_2,x'_3,y,y' \}$. Then at least one of the following holds.
\begin{itemize}
	\item[(i)] there are at least $\eps n^{4c}$ $(X,Y)$-bridges $S_0$ of length~$c+1$ with $|V(S_0) \cap W| = 5$;
	\item[(ii)] $X$ is not $(c, \eta)$-closed and  $| L(x_1x_2x_3) \cap X | \ge |X| -18 \rho n$ or $| L(x'_1x'_2x'_3) \cap X | \ge |X| -18 \rho n$.
\end{itemize}

\begin{proof}[Proof of the claim]
Let $i\in [3]$.  Since $x_i x'_1 y$ is $(\rho,X,Y)$-typical, (T2) and (T3) imply that
\begin{align}
|X| - \rho n &\le \deg(x_i y, X) + \deg(x'_1 y, X) \le |X| + \rho n. \label{eqn:T3}
\end{align}
Since $\deg(x_1 y, X) + \deg(x'_1 y, X) \le |X| + \rho n$, by swapping $x_1$ and $x'_1$ if necessary, we may assume that $\deg(x'_1 y, X) \le (|X| + \rho n)/2\le |X| - 11 \rho n $. 
For any $i \in [3]$, since $x_i x'_1 y$ is $(\rho,X,Y)$-typical, we have 
\begin{align}
	&\deg(x_i x_1', Y) \ge |Y| - \rho n \label{eqn:T1i}, \\
	&|N(x_i y, X)\cap N(x_1' y, X)| \le \rho n \nonumber .
\end{align}
Since $\deg(x_i y, X) = |N(x_i y, X)\cap N(x_1' y, X)| + | N(x_i y, X) \setminus  N(x'_1 y,X) |$, it follows that
\begin{align*}
 | N(x_i y, X) \setminus  N(x'_1 y,X) |   &\ge \deg(x_i y,X)  - \rho n  \overset{\eqref{eqn:T3}}{\ge} | X \setminus N(x'_1 y,X) | - 2 \rho n.
\end{align*}
This implies that, for distinct $i, j \in [3]$,
\begin{align}
	| N(x_i y,X) \cap N(x_j y,X)| & \ge | (N(x_i y,X) \cap N(x_j y,X)) \setminus N(x'_1 y,X) | \nonumber \\
	& \ge | N(x_i y,X) \setminus  N(x'_1 y,X) | + | N(x_j y,X) \setminus  N(x'_1 y,X) | - | X \setminus N(x'_1 y,X) | \nonumber \\
	& \ge | X \setminus N(x'_1 y,X) | - 4 \rho n
	\ge 7 \rho n. \label{eqn:xixjy}
\end{align}

Suppose that there exist $i, j\in [3]$ such that $\deg(x_ix_j,Y) \ge 4 \rho n$.
By applying \eqref{eqn:T1i} twice, we derive that
\begin{align*}
|L(x'_1 x_ix_j ) \cap Y | \ge |N(x'_1 x_i) \cap N(x'_1 x_j) \cap N(x_i x_j) \cap Y| \ge 2 \rho n.
\end{align*}
Without loss of generality, assume that $|L(x'_1 x_ix_j ) \cap V_{i_4}| \ge \rho n $.
Consider any $z \in L(x'_1 x_i x_j ) \cap V_{i_4}$.
Let $S$ be a $(y',z)$-connector of length~$c$ such that $S \cap W= \emptyset$
(there are at least $\eta n^{4c-1}/2$ choices for $S$).
Then $S \cup \{ x_i, x'_1, y',z\}$ is an $(x_j,y)$-connector of length~$c+1$ because both $H[S \cup \{ x_i, x'_1, y',z ,x_j\}] = H[x'_1x_ix_jz] \cup H[S \cup y']$ and $H[S \cup \{ x'_1,x_i, y',z ,y\}] = H[x'_1x_iyy'] \cup H[S \cup z]$ have $K_4^-$-factors.
Hence $(x_j,y,S \cup \{ x'_1,x_i, y',z\})$ is an $(X,Y)$-bridge of length~$c+1$.
Since a given $4c$-set may be partitioned into a singleton and $(4c-1)$-set in $4c$ ways, there are at least $\rho n \cdot (\eta n^{4c-1}/2) \cdot (1/4c)\ge \eps n^{4c}$ $(X,Y)$-bridges $S_0$ of length~$c+1$ such that $|V(S_0)\cap W|=5$. This proves (i) and we are done.

Now suppose that $\deg(x_ix_j,Y) < 4 \rho n$ for all distinct $i,j \in [3]$. Since $ |X| \le (1/2 + \rho) n$,  
\begin{align}
\deg(x_ix_j,X) \ge (1/2 - \gamma)n - 4 \rho n \ge |X| - 6 \rho n. \label{eqn:xixjX}
\end{align}
Thus $| L(x_1x_2x_3) \cap X | \ge |X| -18 \rho n$. If $X$ is not $(c,\eta)$-closed, then we obtain (ii); otherwise assume that $X$ is $(c,\eta)$-closed. By \eqref{eqn:xixjy} and~\eqref{eqn:xixjX},
\begin{align*}
| L(x_1x_2y) \cap X| \ge  |N(x_1y) \cap N(x_2y) \cap N(x_1x_2) \cap X| \ge \rho n.
\end{align*}
Consider any $z' \in L(x_1x_2y) \cap X$.
Let $S'$ be an $(x_1', z')$-connector of length~$c$ such that $S' \cap W= \emptyset$ (there are $\eta n^{4c-1}/2$ choices for $S$).
Then $S' \cup \{ x'_1,x_1,y,z'\}$ is an $(x_2,y')$-connector of length~$c+1$ because both $H[S' \cup \{ x'_1,x_1,x_2, y ,z'\}] = H[x_1x_2 y z'] \cup H[S' \cup x_1']$ and $H[S' \cup \{ x'_1,x_1, y,y' ,z'\}] = H[x'_1 x_1 y y'] \cup H[S' \cup z']$ have $K_4^-$-factors.
Thus $(x_2,y',S' \cup \{ x'_1,x_1,y,z' \})$ is an $(X,Y)$-bridge of length~$c+1$. In total, there are at least $\rho n \cdot (\eta n^{4c-1}/2 )\cdot(1/4c)\ge \eps n^{4c}$ $(X,Y)$-bridges $S_0$ of length~$c+1$ such that $|V(S_0)\cap W|=5$, implying (i).
\end{proof}
Now we return to the proof of the lemma.
We apply the claim for each pair $(y,y') \in \mathcal Y$ and  each copy of $K_{3,3}$ in $G$.

First assume that for \emph{some} pair $(y,y') \in \mathcal Y$, at least $\eps_0 n^6 / 2$ copies of $K_{3,3}$ in $G$ satisfy (ii) of the claim. Then $X$ is not $(c, \eta)$-closed, so $X = V_{i_1}\cup V_{i_2}$ with $V_{i_1}\cap V_{i_2} = \emptyset$. In addition, either $|L(x_1x_2x_3) \cap X| \ge |X| - 18 \rho n $ or $|L(x'_1x'_2x'_3) \cap X| \ge |X| - 18 \rho n $. This implies that at least $\eps_0 n^3 / 2$ 3-sets $S\subset X$ satisfy $|L(S)|\ge  |X| - 18 \rho n $. 
For any $v \in L(S) \cap V_{i_1} $ and $v' \in L(S) \cap V_{i_2} $, $(v,v',S)$ is a $(V_{i_1}, V_{i_2})$-bridge of length~$1$.
Recall that $|V_{i}|,|V_{i'}| \ge (1/4 - 4 \gamma) n$. 
Thus, there are at least 
$$(\eps_0 n^3 / 2) (|V_{i_1}|-18\rho n) (|V_{i_2}| - 18\rho n) \ge \eps' n^5$$
$(V_{i_1}, V_{i_2})$-bridges of length~$1$, implying (b).

The only other case to consider is when for \emph{every} $(y,y') \in \mathcal Y$, at least $\eps_0 n^6 / 2$ copies of $K_{3,3}$ in $G$ satisfy (i) of the claim. 
In this case,
for each $(y,y') \in \mathcal{Y}$, there exist at least $\eps_0 n^6 / 2$ 6-sets $W' \subseteq X$ such that there are at least
$\eps_0 n^{4c}$ $(X,Y)$-bridges $S_0$ of length~$c+1$ with $|V(S_0) \cap (W'\cup \{y, y'\})| = 5$.
By averaging, for each such $W'$, there is a $5$-subset $W_0\subset (W'\cup \{y, y'\})$ that is contained in at least $\eps_0 n^{4c}/\binom{8}{5}$ $(X,Y)$-bridges of length~$c+1$. Since there are at least  $| \mathcal{Y} |  (\eps_0 n^6) / 2 \binom{n}3$ choices of $W_0$ and a $(4c+5)$-set contains at most $\binom{4c+5}{5}$ such $W_0$,  the total number of $(X,Y)$-bridges of length $c+1$ is at least
\begin{align}
	\frac{| \mathcal{Y} |  \eps_0 n^6 / 2}{\binom{n}{3}} \cdot \frac{\eps_0 n^{4c}}{\binom{8}{5}}\cdot \frac1{\binom{4c+5}{5}} \ge \eps' n^{4c+5}, \nonumber
\end{align}
yielding (a).
\end{proof}

We are ready to prove Lemma~\ref{lem:B5}.

\begin{proof}[Proof of Lemma~\ref{lem:B5}]
Since each copy of $K_4^-$ misses one edge, after averaging (and relabeling the indices if necessary), we 
assume that there are at least $\e n^4/4$ copies of $K_4^-$ of type $V_{i_1}V_{i_2}V_{i_3}V_{i_4}$ such that all the missing edges are of type $V_{i_2} V_{i_3} V_{i_4}$. Denote by $\mathcal F$ the family of these copies of $K_4^-$.
Let $Y:= V_{i_3} \cup V_{i_4}$ and $X:=V(H)\setminus Y$.
Let $\eps', \eps''$ be such that
$$\eta' \ll \eps' \ll \eps'' \ll \eps, \eta,1/c.$$
We claim that it suffices to show that there exists $c'\le c +1$ such that there are at least $\eps' n^{4c'+1}$  $(X,Y)$-bridges of length~$c'$.
Indeed, there are at most four types of pairs $(V_i, V_j)$ such that $V_i\subseteq X$ and $V_j\subseteq Y$.
If there are at least $\eps' n^{4c'+1}$  $(X,Y)$-bridges of length~$c'$, then by averaging, there exist $i \ne j\in [d]$ such that there are at least $\eps' n^{4c'+1}/4$  $(V_i, V_j)$-bridges of length~$c'$.
By Lemma~\ref{lem:bridge} and Proposition~\ref{prop21}, we conclude that $V_i\cup V_j$ is $(3c+1, \eta')$-closed in $H$ as $2c+c' \le 3c+1$.

Since each $F= xx'yy' \in \mathcal{F}$ satisfies $x,x' \in X$, $y,y' \in Y$, $xx'y, xx'y', xyy' \in E(H)$,
we can apply Lemma~\ref{lem:b2} to $F$. Recall that $|\mathcal{F}|\ge \eps n^4/4$.
First assume that there are at least $\eps n^4/12 $ copies $F$ satisfying Lemma~\ref{lem:b2}(a), that is, each $V(F)$ is contained in $\gamma n/4$ 5-sets that span $(X, Y)$-bridges of length~$1$.
Then there are at least $\frac15 (\eps n^4/12 \cdot \r n/4) = \eps \gamma n^5/240 \ge \eps' n^5$ $(X, Y)$-bridges of length~$1$, as desired.

Second if there exist at least $\eps n^4/ 12$ copies $F\in \mathcal{F}$ that satisfy Lemma~\ref{lem:b2}(c), then we are done by Lemma~\ref{clm:case1}.

Finally, if there exist at least $\eps n^4/ 12$ copies $F\in \mathcal{F}$ that satisfy Lemma~\ref{lem:b2}(b), then there are $ ( \eps n^4/ 12 \cdot \gamma n /4) / 4n \ge \eps'' n^4$ copies $K$ of $K^3_4$ with $|V(K) \cap X| = |V(K) \cap Y|$.
Apply Lemma~\ref{lem:b1} to each such~$K$. 
First suppose there exists at least $\eps'' n^4/2 $ copies $K$ that satisfy Lemma~\ref{lem:b1}(a), that is, $V(K)$ is contained in $\gamma n/4$ 5-sets that span $(X, Y)$-bridges of length~$1$.
Then there are at least $\frac15  ( \eps'' n^4/2 \cdot \gamma n /4) \ge \eps' n^5$ $(X, Y)$-bridges of length~$1$ and we are done.
Thus we may assume that at least $\eps'' n^4/2$ copies $K = xx'yy'$ with $x,x' \in X$ and $y,y' \in Y$ satisfy Lemma~\ref{lem:b1}(b). Then we are done by Lemma~\ref{clm:case1}.
\end{proof}

\section{The extremal case}\label{secex}
In this section we prove Lemma~\ref{extremallemma}, that is,  Theorem~\ref{mainthm} in the case when $H$ is extremal.
Let $H, H'$ be two $k$-graphs on the same vertex set $V$. Let $H' \setminus H := (V, E(H')\setminus E(H))$. Suppose that $0\le \alpha\le 1$ and $|V|= n$. A vertex $v\in V$ is called \emph{$\alpha$-good in $H$} (otherwise \emph{$\alpha$-bad}) with respect to $H'$ if $\deg_{{H'}\setminus H}(v) \le \alpha n^{k-1}$. We first deal with the extremal case in the special case when every vertex in $H$ is `good'.

\begin{lemma}\label{allgood}
Let $0<1/m \ll \alpha <1/10^5$ where $m \in \mathbb N$. Suppose that $H$ is a $3$-graph on $V=A\cup B$ where $|A|=|B|=6m$. Further, suppose that every vertex in $H$ is $\alpha$-good with respect to $\mathcal B[A,B]$. Then $H$ contains a  $K_4 ^-$-factor.
\end{lemma}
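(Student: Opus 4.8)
The plan is to imitate inside $H$ the obvious $K_4^-$-factor of $\B[A,B]$. First I would record which $4$-sets span a copy of $K_4^-$ in $\B[A,B]$: a $4$-set meeting $A$ in exactly one vertex spans precisely a copy of $K_4^-$ (the $A$-vertex lies in all three edges and the all-$B$ triple is missing); a $4$-set inside $A$ spans $K_4^3$; and in the two remaining cases (two or three vertices in $A$) it spans at most two edges. Hence the canonical $K_4^-$-factor of $\B[A,B]$ consists of $2m$ copies of the first kind -- call these \emph{cherries}, each using one vertex of $A$ and three of $B$ -- together with $m$ copies inside $A$, using $2m+m=3m=n/4$ copies (where $n:=12m$) and covering all of $V$. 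The only available hypothesis is $\alpha$-goodness, i.e.\ $\deg_{\B[A,B]\setminus H}(v)\le \alpha n^2$ for every $v\in V$, so $H$ and $\B[A,B]$ agree on all but $O(\alpha n^2)$ triples through each vertex. I would construct a cherry part and an ``$A$-clique'' part separately.

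For the $B$-side, call a pair $\{t,t'\}\subseteq B$ \emph{heavy} if $\{a,t,t'\}\notin E(H)$ for at least $\alpha^{1/3}n$ vertices $a\in A$. Distinct heavy pairs through a fixed $t\in B$ give rise to disjoint families of missing edges at $t$, each of size at least $\alpha^{1/3}n$, so the graph $F$ of heavy pairs on $B$ satisfies $\Delta(F)\le \alpha n^2/(\alpha^{1/3}n)=\alpha^{2/3}n$, which is far below $|B|/3$. Thus $\overline F$ has minimum degree more than $\tfrac23|B|$, and since $3\mid|B|$ the Hajnal--Szemer\'edi theorem~\cite{hs} yields a $K_3$-factor of $\overline F$, i.e.\ a partition of $B$ into $2m$ triples $T_1,\dots,T_{2m}$ in which every pair is light. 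For such a triple $T$, the $4$-set $\{a\}\cup T$ spans a copy of $K_4^-$ in $H$ as soon as the three triples $\{a,t,t'\}$ with $\{t,t'\}\subseteq T$ all lie in $E(H)$, which fails for at most $3\alpha^{1/3}n$ vertices $a\in A$. Since each $T_i$ therefore has more than $|A|/2>2m$ suitable vertices of $A$, a one-line greedy (or Hall) argument picks distinct $a_1,\dots,a_{2m}\in A$ with $\{a_i\}\cup T_i$ spanning $K_4^-$ for each $i$.

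For the $A$-side, set $A':=A\setminus\{a_1,\dots,a_{2m}\}$, so $|A'|=4m$. Call a $4$-set inside $A$ \emph{bad} if at least two of its four triples are missing from $H$; a $4$-set spans $K_4^-$ in $H$ precisely when it is not bad. A bad $4$-set containing a fixed $v\in A$ must contain a missing edge through $v$ (the unique triple of the $4$-set avoiding $v$ cannot be its only missing triple), so at most $\alpha n^2\cdot n=\alpha n^3$ bad $4$-sets contain $v$. Hence the $4$-graph $\mathcal G$ on $A'$ whose edges are the $K_4^-$-spanning $4$-subsets of $A'$ has $\delta_1(\mathcal G)\ge \binom{4m-1}{3}-\alpha n^3\ge \tfrac34\binom{4m-1}{3}$, far above the minimum vertex-degree threshold for a perfect matching in a $4$-graph (alternatively, $\mathcal G$ is obtained from a complete $4$-graph by deleting only $O(\alpha n^3)$ edges at each vertex, so a perfect matching is easily found directly); thus $\mathcal G$ has a perfect matching, partitioning $A'$ into $m$ $4$-sets each spanning $K_4^-$ in $H$. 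Together with the $2m$ cherries $\{a_i\}\cup T_i$ this is a $K_4^-$-factor of $H$.

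The argument is short once the cherry/$A$-clique split is identified; the one place that needs real care is partitioning $B$, where a naive greedy would get stuck on the last few vertices and the clean fix is to apply Hajnal--Szemer\'edi to the (sparse) heavy-pair graph. The remainder is checking that the error terms $\alpha^{2/3}n$, $3\alpha^{1/3}n$ and $\alpha n^3$ are small compared with $|B|$, $|A|$ and $\binom{4m-1}{3}$ respectively, which is immediate from $\alpha<10^{-5}$.
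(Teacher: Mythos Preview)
Your proof is correct and follows the same two-phase strategy as the paper: partition $B$ into $2m$ ``good'' triples, greedily extend each to a cherry using one vertex of $A$, and then find a $K_4^-$-factor on the remaining $4m$ vertices of $A$ via a perfect matching in a dense auxiliary $4$-graph (the paper also invokes Daykin--H\"aggkvist here). The only real difference is in how the $B$-partition is obtained: the paper builds a $3$-graph $H_B$ on $B$ whose edges are the triples with many cherry-extensions and applies Daykin--H\"aggkvist directly, whereas you drop one level and define a $2$-graph of light pairs, then use Hajnal--Szemer\'edi to get a triangle factor --- both routes are short and interchangeable in this setting.
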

\proof
Write $n:=12m$. Define an auxiliary $3$-graph $H_B$ with vertex set $B$ and where $xyz \in E(H_B)$ precisely if there are at least $|A|-3\alpha ^{1/2}n$ vertices $a \in A$ such that $xyza$ spans a copy of $K^- _4$ in $H$.

\begin{claim}
$H_B$ contains a perfect matching $M_B$.
\end{claim}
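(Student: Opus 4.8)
We will obtain $M_B$ inside a dense spanning subhypergraph of $H_B$ built from the ``non-deficient'' triples of $B$, and then extract a perfect matching there via a triangle-factor theorem. Call a pair $xy\in\binom B2$ \emph{deficient} if $|A|-\deg_H(xy,A)\ge\alpha^{1/2}n$, i.e.\ at least $\alpha^{1/2}n$ vertices $a\in A$ satisfy $xya\notin E(H)$. Each such $a$ gives an edge $xya$ of $\B[A,B]$ that is missing from $H$ and incident to $x$, so the $\alpha$-goodness of $x$ yields $\sum_{y\in B}\bigl(|A|-\deg_H(xy,A)\bigr)\le \deg_{\B[A,B]\setminus H}(x)\le \alpha n^2$. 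Hence $x$ lies in at most $\alpha^{1/2}n$ deficient pairs; writing $R$ for the graph on $B$ whose edges are the deficient pairs, this means $\Delta(R)\le \alpha^{1/2}n$.

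Next I would verify that every $3$-set $xyz\in\binom B3$ containing no deficient pair is an edge of $H_B$. Indeed, for such a triple the number of $a\in A$ for which $xya,\,xza,\,yza$ all lie in $E(H)$ is at least $|A|-3\alpha^{1/2}n$ (one deficiency bound for each of the three pairs), and for any such $a$ the set $\{x,y,z,a\}$ spans at least three edges of $H$, hence a copy of $K_4^-$. Therefore $H_B$ contains the spanning subhypergraph $G_B$ whose edge set is precisely the family of $R$-independent triples of $B$, and it suffices to partition $B$ into $R$-independent triples.

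Equivalently, I want a $K_3$-factor in the complement $\overline R$. Since $|B|=6m$ is divisible by $3$ and
\[
\delta(\overline R)\ \ge\ |B|-1-\Delta(R)\ \ge\ 6m-1-\alpha^{1/2}n\ \ge\ 4m\ =\ 2|B|/3
\]
(using $\alpha<1/10^5$ and $m$ large), the Hajnal--Szemer\'edi theorem~\cite{hs} provides a $K_3$-factor of $\overline R$; each of its triangles is an $R$-independent triple, hence an edge of $G_B\subseteq H_B$, so these triangles together form the desired perfect matching $M_B$.

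The computations here are routine; the two points that actually matter are the degree bound $\Delta(R)\le\alpha^{1/2}n$ (which is exactly what $\alpha$-goodness of every vertex provides) and the implication ``three non-deficient pairs $\Rightarrow xyz\in E(H_B)$''. The only mild subtlety is that a purely greedy construction of $M_B$ could get stuck on the last few triples, so I would appeal to a triangle-factor result rather than building $M_B$ edge by edge.
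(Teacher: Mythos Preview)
Your proof is correct. The underlying counting is the same as the paper's: both arguments show that each vertex of $B$ lies in at most $\alpha^{1/2}n$ ``bad'' pairs (your deficient pairs), and that any triple avoiding all bad pairs is an edge of $H_B$. The difference is only in how the perfect matching is then extracted. The paper stays in the $3$-graph setting: it lower-bounds $\delta_1(H_B)$ directly and appeals to the Daykin--H\"aggkvist vertex-degree condition for perfect matchings in $3$-graphs. You instead pass to the $2$-graph $R$ of deficient pairs, observe $\Delta(R)\le\alpha^{1/2}n$, and apply Hajnal--Szemer\'edi (really Corr\'adi--Hajnal) to get a triangle factor in $\overline R$. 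Your route has the mild advantage of reducing to a more familiar graph-tiling statement; the paper's route has the advantage of being a one-line application once $\delta_1(H_B)$ is computed. Neither gains anything substantive over the other here.
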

\proof
To prove the claim, consider any vertex $x \in B$. Since $x$ is $\alpha$-good, all but at most $\alpha^{1/2} n$ vertices $y \in B\setminus \{x\}$ are such that $\deg_H(xy,A) \geq |A|-\alpha ^{1/2} n$. Fix such a vertex $y \in B$. Similarly,  all but at most $2\alpha^{1/2} n$ vertices $z \in B\setminus \{x,y\}$ are such that $\deg_H(xz,A), \deg_H(yz,A) \geq |A|-\alpha ^{1/2} n$. Fix such a vertex $z \in B$. Notice that $xyza$ spans a copy of $K^- _4$ for at least $|A|-3\alpha ^{1/2} n$ vertices $a \in A$.
Thus, $xyz \in E(H_B)$.

There are at least $|B|-2\alpha ^{1/2} n$ choices for $y$ and at least $|B|-3\alpha ^{1/2} n$ choices for $z$. Hence,
$$\deg_{H_B} (x) \geq \frac{1}{2}(|B|-2\alpha ^{1/2} n)(|B|-3\alpha ^{1/2} n),$$ and so
$\delta _1(H_B) \geq \frac{1}{2}(|B|-2\alpha ^{1/2} n)(|B|-3\alpha ^{1/2} n).$ This minimum vertex degree condition forces a perfect matching in $H_B$ (for example, by a result of Daykin and H\"aggkvist~\cite{dayhag}), as required.
\endproof

By definition of $H_B$, for each edge $xyz$ in $M_B$, we can greedily pair off $xyz$ with a distinct vertex $a \in A$ so that $xyza$ forms a copy of $K^- _4$ in $H$. We therefore obtain a $K^- _4$-tiling $\mathcal M_1$ in $H$ that covers all of $B$ and $2m$ vertices in $A$. Let $A':=A\setminus V(\mathcal M_1)$. So $|A'|=4m$.

Set $H':=H[A']$. Further, define an auxiliary $4$-graph $H_{A'}$ with vertex set $A'$ and where $xyzw \in E(H_{A'})$ precisely if $xyzw$ spans a copy of $K^- _4$ in $H$.
Consider any $x\in A'$. Since $x$ is $\alpha$-good, all but at most $2{\alpha}^{1/2} n$ vertices $y\in A'\setminus \{x\}$ are such that $\deg_{H'} (xy)\geq |A'|-2-\alpha ^{1/2} n$ in $H'$. Fix such a vertex $y$. Next fix a vertex $z \in N_{H'} (xy)$ where
$\deg_{H'} (xz) \geq |A'| -2 -\alpha^{1/2} n$. There are at least $|A'|-2-3 \alpha ^{1/2}n$ choices for $z$. Finally, fix a vertex $w \in N_{H'} (xy) \cap N_{H'} (xz)$; there are at least $|A'|-4-2\alpha ^{1/2} n$ choices for $w$.
Then $xyzw$ spans a copy of $K^- _4$ in $H'$ and so $xyzw \in E(H_{A'})$.
There are at least $|A'|-3{\alpha}^{1/2} n$ choices for $y$, $|A'|-4 \alpha ^{1/2}n$ choices for $z$ and $|A'|-3\alpha ^{1/2} n$ choices for $w$. Therefore,
$$\delta _1 (H_{A'}) \geq \frac{1}{3!} (|A'|-3{\alpha}^{1/2} n) (|A'|-4 \alpha ^{1/2}n)(|A'|-3\alpha ^{1/2} n).$$
This implies that $H_{A'}$ contains a perfect matching (again, by the result of Daykin and H\"aggkvist~\cite{dayhag}) and thus, $H'$ contains a  $K^- _4$-factor $\mathcal M_2$. So $\mathcal M_1 \cup \mathcal M_2$ is a  $K^- _4$-factor in $H$, as desired. 
\endproof

We now apply Lemma~\ref{allgood} to prove Lemma~\ref{extremallemma}.

\begin{proof}[Proof of Lemma~\ref{extremallemma}]
Let $0<1/n_0 \ll \gamma \ll \gamma _1 \ll \gamma _2 \ll \gamma _3 \ll 1$. Suppose that $H$ is as in the statement of the lemma. In particular, since $H$ is $\gamma$-extremal, there exists a partition $A,B$ of $V(H)$ such that $|A|=|B|$ and $H$
$\gamma$-contains $\mathcal B[A,B]$. Furthermore, this implies that all but at most $\gamma _1 n$ vertices in $H$ are $\gamma _1$-good with respect to $\mathcal B[A,B]$. Let $A_0$ and $B_0$ denote the set of $\gamma_1$-bad vertices in $A$ and $B$ respectively. 

We say that a vertex $x \in A_0\cup B_0$ is \emph{$B$-acceptable} if there are at least $n^2/40$ pairs $(a,b)$ of vertices where $a \in A$, $b \in B$ and $abx \in E(H)$. Otherwise we say that $x \in A_0\cup B_0$ is \emph{$A$-acceptable}. Since
$\delta _1 (H) \geq (n-1)(n/2 - 1)/2 $, if $x$ is $A$-acceptable then:
\begin{itemize}
\item There are at least $3\binom{|A|}{2}/4$ pairs $(a,a')$ of vertices where $a,a' \in A$ and $aa'x \in E(H)$ and;
\item There are at least $3\binom{|B|}{2}/4$ pairs $(b,b')$ of vertices where $b,b' \in B$ and $bb'x \in E(H)$.
\end{itemize}
Next we modify the partition $A,B$ of $V(H)$ as follows. Move all $A$-acceptable vertices that lie in $B$ to $A$ and move all $B$-acceptable vertices that lie in $A$ to $B$. Since $|A_0| + |B_0| \le \gamma _1 n$, $A,B$ remains a partition of $V(H)$ where now:
\begin{itemize}
\item $n/2-\gamma _1 n \leq |A|,|B|\leq n/2+\gamma _1 n$;
\item $H$ $\gamma _2$-contains $\mathcal B[A,B]$.
\end{itemize}
Moreover, there is a partition $A_1,A_2$ of $A$ so that:
\begin{itemize}
\item[($\alpha _1$)] $|A_2|\leq \gamma _1 n$;
\item[($\alpha _2$)] Every vertex in $A_1$ is $\gamma _2$-good with respect to $\mathcal B[A,B]$;
\item[($\alpha _3$)] If $x \in A_2$ then
\begin{itemize}
\item[$\bullet$] there are at least $2\binom{|A|}{2}/3$ pairs $(a,a')$ of vertices where $a,a' \in A$ and $aa'x \in E(H)$ and;
\item[$\bullet$] there are at least $2\binom{|B|}{2}/3$ pairs $(b,b')$ of vertices where $b,b' \in B$ and $bb'x \in E(H)$.
\end{itemize}
\end{itemize}
Similarly, there is a partition $B_1,B_2$ of $B$ so that:
\begin{itemize}
\item[($\beta _1$)] $|B_2|\leq \gamma _1 n$;
\item[($\beta _2$)] Every vertex in $B_1$ is $\gamma _2$-good with respect to $\mathcal B[A,B]$;
\item[($\beta _3$)] If $x \in B_2$ then  there are at least $n^2/50$ pairs $(a,b)$ of vertices where $a \in A$, $b \in B$ and $abx \in E(H)$.
\end{itemize}

Our aim will be to find a small $K_4^-$-tiling which covers all the vertices in $A_2 \cup B_2$ so that the set of uncovered vertices, $A^*$ and $B^*$ in $A$ and $B$ respectively, are such that $|A^*|=|B^*| \equiv 0 \mod 6$.  
Then ($\alpha _2$) and ($\beta _2$) will ensure that every vertex in $H[A^* \cup B^* ]$ is $\gamma _3$-good with respect to $\mathcal B[A^*, B^*]$.
Thus, Lemma~\ref{allgood} ensures a  $K^- _4$-factor in $H[A^*\cup B^*]$ and hence  a  $K^- _4$-factor in $H$.
To guarantee that $|A^*|=|B^*| \equiv 0 \mod 6$ we will require the existence of two `parity breaking' copies of $K^- _4$ in $H$. These subgraphs will be obtained in the following claim.
We say a copy $K$ of $K^-_4$ in $H$ is of \emph{type $(i,j)$} if $K$ contains $i$ vertices from $A$ and $j$ vertices from $B$.

\begin{claim}\label{KK}
$H$ contains two copies $K,K'$ of $K^- _4$ so that one of the following conditions holds:
\begin{itemize}
\item[(i)] $K$ and $K'$ are not necessarily vertex-disjoint; $K$ is of type $(2,2)$; $K'$ is of $(3,1)$;
\item[(ii)] $K$ and $K'$ are  vertex-disjoint; $K$ and $K'$ are of type $(3,1)$;
\item[(iii)] $K$ and $K'$ are  vertex-disjoint; $K$ and $K'$ are of type $(2,2)$.
\end{itemize}
\end{claim}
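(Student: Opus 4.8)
\emph{Overall strategy.} The plan is to exploit that a \emph{balanced} copy of $\mathcal B[A,B]$ has codegree only $|A|-2<n/2-1$ on pairs inside $A$, so that $\delta_2(H)\ge n/2-1$ forces $H$ to contain many edges with exactly two vertices in $A$ and one in $B$; call these \emph{$AAB$-edges} (each is a non-edge of $\mathcal B[A,B]$). First I would show by double counting that
\[
e(AAB)=\sum_{aa'\in\binom A2}\deg_H(aa',B)\ \ge\ \binom{|A|}{2}\left(\tfrac{n}{2}+1-|A|\right)+3\left(\binom{|A|}{3}-e(A)\right),
\]
together with a symmetric estimate over pairs with one vertex in $A$ and one in $B$ (sharper when $|B|<n/2$, coming from $\deg_H(ab)\ge n/2-1$ and $\deg_H(ab,B)\le|B|-1$). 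Using $n/2-\gamma_1n\le|A|,|B|\le n/2+\gamma_1n$, these give $e(AAB)\ge cn^2$ in all cases, and they improve to $e(AAB)\ge c'\delta n^3$ whenever $H[A]$ is $\delta$-far from complete or $|A|$ (equivalently $|B|$) is $\delta$-far from $n/2$. Accordingly I would split into an \emph{abundant} case, $e(AAB)\ge\delta_0n^3$ for a suitable absolute $\delta_0$ with $\gamma_2\ll\delta_0$, and a \emph{rigid} case, where $H[A]$ is almost complete and $|A|,|B|$ are almost $n/2$.

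\emph{The local step and the abundant case.} The workhorse is the following. Given an $AAB$-edge $a_1a_2b$, either $\deg_H(a_1b,B)+\deg_H(a_2b,B)\ge|B|+n/4$, so $N_H(a_1b,B)\cap N_H(a_2b,B)$ has at least $n/4$ vertices $b'$, each making $\{a_1,a_2,b,b'\}$ span a copy of $K_4^-$ of type $(2,2)$; or $\deg_H(a_1b,A)+\deg_H(a_2b,A)\ge|A|-2-n/4$, so one of $a_1,a_2$, say $a_1$, has at least $n/9$ neighbours inside $A$ through the pair $a_1b$, and (if $a_1$ lies in few enough missing $AAA$-edges) $\Omega(n^2)$ pairs $\{a_3,a_4\}\subseteq N_H(a_1b,A)$ satisfy $a_1a_3a_4\in E(H)$, each making $\{a_1,a_3,a_4,b\}$ span a copy of type $(3,1)$. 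In the abundant case, all but $O(\gamma_2n^3)$ of the $AAB$-edges avoid every $\gamma_1$-bad vertex and every vertex lying in many missing $AAA$-edges (there are only $O(\gamma_2n)$ of the latter), so at least $\delta_0n^3/2$ of them are ``clean''; the clean $AAB$-edges form a $3$-graph of maximum vertex-degree $O(n^2)$ and hence contain a matching of size $\Omega(n)$. Processing these $\Omega(n)$ pairwise disjoint clean $AAB$-edges one at a time and using the local step each time to attach a still-unused vertex (there are $\ge n/4$ choices, far more than the $O(\delta_0n)$ vertices used so far) yields $\Omega(n)$ pairwise disjoint copies of $K_4^-$, each of type $(2,2)$ or $(3,1)$; two copies of the same type give conclusion (ii) or (iii).

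\emph{The rigid case and the main obstacle.} In the rigid case $e(AAB)$ may be only $\Theta(n^2)$, so the $AAB$-edges can cluster on a bounded vertex set that need not consist of $\gamma_2$-good vertices, and the preceding scheme fails; this is the crux of the proof. I would use the dichotomy: if the $AAB$-edge hypergraph has two vertex-disjoint edges $a_1a_2b$ and $a_3a_4b'$, then I extend each by the local step -- to a $(2,2)$-copy when the relevant $A$-degrees are small and otherwise to a $(3,1)$-copy centred at a high-degree $A$-vertex, using that $H[A]$ is almost complete so that a typical such vertex lies in fewer missing $AAA$-edges than the number of pairs among its neighbours inside $A$ -- and choose the two extra vertices disjointly, obtaining two disjoint copies; depending on their types this gives (ii), (iii), or (i). Otherwise essentially all $AAB$-edges pass through a single vertex $b_0$, and I would work inside the link graph $G_{b_0}$ on $A$ (where $aa'\in E(G_{b_0})\iff aa'b_0\in E(H)$): either $G_{b_0}$ has an edge with both endpoint-degrees small, which via the local step yields many $(2,2)$-copies through $b_0$; or $G_{b_0}$ has a high-degree $A$-vertex lying in few missing $AAA$-edges, which yields $(3,1)$-copies through $b_0$. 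Combining this with the fact that $b_0$ itself lies in many $ABB$-edges of $\mathcal B[A,B]$ (since $b_0$ is $\gamma_2$-good, or else by $(\beta_3)$) produces \emph{both} a $(2,2)$-copy and a $(3,1)$-copy, which is precisely conclusion (i) -- and explains why (i) is stated allowing $K$ and $K'$ to overlap. The technical heart, which I expect to be the main obstacle, is exactly this clustered sub-case: one must extract both a $(2,2)$- and a $(3,1)$-copy from the neighbourhood of a single vertex $b_0$ knowing only that $H[A]$ is \emph{almost} complete, which comes down to balancing the degree thresholds used in $G_{b_0}$ against the $O(\gamma_2n^2)$ missing $AAA$-edges at a typical $A$-vertex.
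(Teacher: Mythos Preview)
Your density-counting route is substantially more elaborate than what the paper does, and the rigid case as you sketch it has a genuine gap. The core issue is your ``local step'': when $\deg_H(a_1b,A)+\deg_H(a_2b,A)$ is large and you want a $(3,1)$-copy $\{a_1,a_3,a_4,b\}$ via $a_1a_3a_4\in E(H)$, you need the \emph{specific} vertex $a_1$ to lie in few missing $AAA$-edges. In the rigid case you only know this for \emph{typical} $A$-vertices, but the two disjoint $AAB$-edges you start from may well use atypical vertices (indeed $a_1\in A_2$ is not excluded, and $(\alpha_3)$ gives no control on missing $AAA$-edges through $a_1$). Separately, your ``otherwise all $AAB$-edges pass through a single vertex $b_0$'' is not justified: the common vertex can lie in $A$ (this happens precisely when $|A|=n/2+1$, $|B|=n/2-1$), and your link-graph analysis on $B$ does not cover this. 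Finally, the last paragraph --- producing \emph{both} a $(2,2)$- and a $(3,1)$-copy from $b_0$ being in many $ABB$-edges --- is not a proof; it is not clear how an $ABB$-edge through $b_0$ yields a $(3,1)$-copy.

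The paper bypasses all of this with one observation you are not using: from \emph{any} $AAB$-edge $aa'b$, the codegree condition alone gives the extension. Since $|N_H(aa')|+|N_H(ab)|+|N_H(a'b')|\ge 3(n/2-1)$, two of these neighbourhoods meet in $\Omega(n)$ vertices, and any such vertex $w$ makes $\{a,a',b,w\}$ span a $K_4^-$ of type $(3,1)$ or $(2,2)$ depending on whether $w\in A$ or $w\in B$. No cleanness of $a,a'$ is required. With this in hand the paper simply splits on whether $|B|\le n/2-1$ or $|B|\ge n/2$: in the first case every pair $ab$ with $a\in A,\ b\in B$ forces an $A$-neighbour, in the second every pair $aa'\subseteq A$ forces a $B$-neighbour; either way one can pick two disjoint $AAB$-edges directly (choosing the free vertices arbitrarily), extend each, and be done --- unless the forced third vertex is always the same one or two vertices, which is exactly the clustered sub-case. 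That sub-case is then handled by an explicit two-line construction using $(\alpha_2),(\alpha_3)$ or $(\beta_2),(\beta_3)$, which is where conclusion~(i) with overlapping $K,K'$ arises. Your instinct about the clustered case being the crux is right, but the paper's treatment is far shorter because the extension step never needed typicality.
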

\proof
To prove the claim we split our argument into two cases depending on the size of $B$.

\smallskip
{\noindent \bf Case 1: $|B|\leq n/2-1$.} 
Fix $a \in A$ and $b \in B$. Then there exists a vertex $ a' \in N_H (ab) \cap A $ since $\delta _2 (H) \geq n/2-1$ and $|B\setminus \{b\}|\leq n/2-2$.
Let $x \in A\setminus \{a,a'\}$ and $y \in B\setminus \{b\}$ be arbitrary. Again there exists a vertex $x ' \in N_H (xy) \cap A$. Suppose that $x' \not = a,a'$.
Certainly, at least two of the sets $N_H(aa'), N_H (ab), N_H(a'b)$ have an intersection of size at least $4$. Thus, $H$ contains a copy $K$ of $K^- _4$ of type $(2,2)$ or $(3,1)$ where $a,a',b \in V(K)$
and $K$ is disjoint from $xyx'$.
Moreover, at least two of the sets $N_H(xx'), N_H (xy), N_H(x'y)$ have intersection at least $5$. Thus, $H$ contains a copy $K'$ of $K^- _4$ of type $(2,2)$ or $(3,1)$ where $x,x',y \in V(K')$ and so that $K$ and $K'$ are vertex-disjoint.
Hence, $K$ and $K'$ satisfy one of (i)--(iii) as desired.

The claim is therefore satisfied in this case, unless for every $x \in A\setminus \{a,a'\}$ and $y \in B\setminus \{b\}$ we have that $ N_H (xy) \cap A \subseteq \{a,a'\}$.
Suppose there are distinct $x,x_1 \in A\setminus \{a,a'\}$ and $y, y_1 \in B\setminus \{b\}$ such that $ a\in N_H (xy) \cap A $ and $a' \in N_H (x_1y_1) \cap A $. Then as above, we obtain vertex-disjoint $K$, $K'$ that satisfy one of (i)--(iii).
This implies  we may assume that for every $x \in A\setminus \{a,a'\}$ and $y \in B\setminus \{b\}$ we have  $ N_H (xy) \cap A = \{a\}$. We now show that in this case, (i) is satisfied.

Choose any edge $a_1b_1b_2 \in E(H)$ where $a_1 \in A\setminus \{a,a'\}$ and $b_1,b_2 \in B\setminus \{b\}$; such an edge exists since $H$ $\gamma _2$-contains $\mathcal B[A,B]$. Then by assumption $aa_1b_1, aa_1b_2 \in E(H)$. Thus, $a a_1 b_1 b_2$ spans a copy
$K$ of $K^- _4$ in $H$ of type $(2,2)$. 

Since $a \in A$, ($\alpha_2$) and ($\alpha_3$) imply that there exists an edge $a xx' \in E(H)$ where $x,x' \in A\setminus \{a,a'\}$. Let $y \in B\setminus \{b\}$ be arbitrary. Then by assumption $axy, ax'y\in E(H)$. Thus, $a x x' y$ spans a copy
$K'$ of $K^- _4$ in $H$ of type $(3,1)$. Hence, the claim is satisfied in this case.

\smallskip

{\noindent \bf Case 2: $|B|\geq n/2$.} Fix $a, a' \in A$. Then since $\delta _2 (H) \geq n/2-1$, there exists a vertex $ b \in N_H (aa') \cap B $.
Let $x,y \in A\setminus \{a,a'\}$  be arbitrary. So there exists a vertex $z \in N_H (xy) \cap B$. Suppose that $b \not = z$.
At least two of the sets $N_H(aa'), N_H (ab), N_H(a'b)$ have intersection at least $4$. Thus, $H$ contains a copy $K$ of $K^- _4$ of type $(2,2)$ or $(3,1)$ where $a,a',b \in V(K)$ and $K$ is disjoint from $xyz$.
Moreover, at least two of the sets $N_H(xz), N_H (xy), N_H(yz)$ have intersection at least $5$. Thus, $H$ contains a copy $K'$ of $K^- _4$ of type $(2,2)$ or $(3,1)$ where $x,y,z \in V(K')$ and so that $K$ and $K'$ are vertex-disjoint.
Hence, $K$ and $K'$ satisfy one of (i)--(iii) as desired.

We may therefore assume that for every $x,y \in A\setminus \{a,a'\}$ we have  $ N_H (xy) \cap B = \{b\}$.
 We will show that (i) is satisfied in this case. Choose any three vertices $a_1,a_2,a_3 \in A\setminus \{a,a'\}$. Then by assumption $a_1a_2b,a_2a_3b, a_1a_3 b \in E(H)$. Thus, 
 $a_1 a_2 a_3 b$ spans a copy
$K'$ of $K^- _4$ in $H$ of type $(3,1)$. 

Since $b \in B$, ($\beta _2$) and ($\beta_3$) imply that there are vertices $x,y \in A\setminus \{a,a'\} $ and $z \in B$ such that $xzb, yzb \in E(H)$. Also, by assumption we have that   $N_H (xy) \cap B = \{b\}$.
So $x y z b$ spans a copy
$K$ of $K^- _4$ in $H$ of type $(2,2)$. 
Hence, the claim is satisfied in this case.
\endproof
The next claim will allow us to cover the vertices in $A_2 \cup B_2$ with a small $K^-_4$-tiling.
\begin{claim}\label{greed}
Let $W \subseteq V(H)$ such that $|W|\leq \gamma _2 n$. Every vertex $x \in (A_2 \cup B_2) \setminus W$ lies in a copy  $K_x$ of $K^- _4$ in $H$  of type   $(1,3)$ such that $K_x$ is disjoint from $W$. 
\end{claim}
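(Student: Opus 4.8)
\emph{Strategy.} The plan is to split into the cases $x\in A_2$ and $x\in B_2$, and in each to build a copy of $K_4^-$ whose unique missing edge is the triple formed by its three $B$-vertices. Thus in the first case it suffices to find a triangle $b_1b_2b_3$ in the link graph $L_x$ with $b_1,b_2,b_3\in B\setminus W$ (then $\{x,b_1,b_2,b_3\}$ carries the three edges $xb_ib_j$), and in the second case to find an auxiliary vertex $a\in A\setminus W$ and vertices $b,b'\in B\setminus W$ with $axb,\,axb',\,abb'\in E(H)$.

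\emph{The case $x\in A_2$.} By ($\alpha_3$) the link graph $L_x$ has at least $2\binom{|B|}{2}/3$ edges inside $B$, and deleting the at most $\gamma_2 n$ vertices of $W$ destroys at most $\gamma_2 n|B|\le \gamma_2 n^2$ of them. Since $|B| = n/2\pm\gamma_1 n$ and $\gamma_1,\gamma_2$ are small, $L_x[B\setminus W]$ then has more than $|B\setminus W|^2/4$ edges, so by Mantel's theorem it contains a triangle $b_1b_2b_3$; now $\{x,b_1,b_2,b_3\}$ spans the desired copy of $K_4^-$ of type $(1,3)$ disjoint from $W$.

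\emph{The case $x\in B_2$.} First I would note that, since $A=A_1\cup A_2$ with $|A_2|\le\gamma_1 n$ and every vertex of $A_1$ is $\gamma_2$-good with respect to $\mathcal{B}[A,B]$ by ($\alpha_2$), all but at most $\gamma_1 n$ vertices $a\in A$ satisfy: fewer than $\gamma_2 n^2$ pairs $\{b,b'\}\subseteq B$ have $abb'\notin E(H)$ (this uses that the link of any $a\in A$ in $\mathcal{B}[A,B]$ induces a complete graph on $B$). Call such $a$ \emph{good}. Let $G_x$ be the bipartite graph on $A\cup B$ with $ab\in E(G_x)$ iff $abx\in E(H)$; by ($\beta_3$), $e(G_x)\ge n^2/50$. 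Removing from $G_x$ the at most $\gamma_1 n$ non-good vertices of $A$ and the at most $\gamma_2 n$ vertices of $W$ destroys only $O(\gamma_2)n^2$ edges, so some good vertex $a\in A\setminus W$ has at least $n/40$ neighbours in $B\setminus W$ in $G_x$; call this set $B'$. As $a$ is good, fewer than $\gamma_2 n^2<\binom{|B'|}{2}$ pairs of $B'$ are non-edges of $L_a$, so there is an edge $bb'$ of $L_a$ with $b,b'\in B'$, i.e.\ $abb'\in E(H)$; and $b,b'\in B'\subseteq N_H(ax)$ gives $axb,axb'\in E(H)$. Hence $\{a,x,b,b'\}$ spans a copy of $K_4^-$ of type $(1,3)$ disjoint from $W$.

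\emph{Main obstacle.} The only genuinely delicate point I foresee is the bookkeeping in the second case: one must check that discarding \emph{both} the $\gamma_2$-bad vertices of $A$ and the set $W$ from $G_x$ still leaves a good, $W$-avoiding vertex of linear degree into $B\setminus W$, and that $\gamma_2$ is small enough that $\binom{|B'|}{2}$ exceeds the number $\gamma_2 n^2$ of pairs possibly missing from $L_a[B']$. Both follow from the hierarchy $\gamma_1\ll\gamma_2\ll 1$; all remaining estimates are routine constant-chasing.
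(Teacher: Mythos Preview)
Your proof is correct and follows essentially the same approach as the paper. For $x\in A_2$ both you and the paper invoke ($\alpha_3$) and Mantel's theorem to find a triangle in $L_x[B\setminus W]$; for $x\in B_2$ the paper tersely combines ($\beta_3$) with the fact that $H$ $\gamma_2$-contains $\mathcal{B}[A,B]$, while you spell out the same argument more explicitly via ($\alpha_2$), restricting to $\gamma_2$-good vertices of $A$ and then finding the required pair $b,b'$ in the large neighbourhood $B'$.
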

\proof
If $x \in A_2 \setminus W$ then ($\alpha _3$) together with Mantel's theorem implies that the subgraph $L_x [B\setminus W]$ of the link graph $L_x$ contains a triangle $T$.
 Note that $T$ corresponds to a copy of   $K^- _4$ in $H$  of type $(1,3)$ that contains $x$.

Suppose that $x \in B_2 \setminus W$. Since $H$ $\gamma _2$-contains $\mathcal B[A,B]$,   ($\beta _3$) implies that there are vertices $a \in A\setminus W$ and $b, b'\in B\setminus W$ such that $abb',abx,ab'x \in E(H)$. Therefore, $x$ indeed lies in a copy   of $K^- _4$ in $H$ of type $(1,3)$ that is disjoint from $W$.
\endproof

By repeatedly applying Claim~\ref{greed}, we can obtain a $K^- _4$-tiling $\mathcal M_1$ in $H$ so that:
\begin{itemize}
\item $|\mathcal M_1| \leq 2\gamma _1 n$;
\item $\mathcal M_1$ is vertex-disjoint from $K$ and $K'$;
\item Every copy of $K^-_4$ in $\mathcal M_1$ is of type  $(1,3)$;
\item Every vertex in $(A_2 \cup B_2)\setminus (V(K) \cup V(K'))$ is covered by $\mathcal M_1$.
\end{itemize}
Let $A':=A\setminus V(\mathcal M_1)$ and $B':=B\setminus V(\mathcal M_1)$. Since $n \equiv 0 \mod 4$ we have that $|A' \cup B'|\equiv 0 \mod 4$ and so $|A'|\equiv |B'| \mod 2$. Further, $n/2-7\gamma _1 n \leq |A'|,|B'|\leq n/2+\gamma _1 n$.
Since $H$ $\gamma _2$-contains $\mathcal B[A,B]$, it is easy to see that we can greedily construct a   $K^- _4$-tiling $\mathcal M_2$ in $H$ so that:
\begin{itemize}
\item $|\mathcal M_2| \leq 8 \gamma _1 n$;
\item $\mathcal M_2$ is vertex-disjoint from $ \mathcal M_1, K,K'$;
\item Every copy of $K^-_4$ in $\mathcal M_2$ is of type $(4,0)$ or $(1,3)$;
\item $|A''|=|B''|$ where $A'':=A'\setminus V(\mathcal M_2)$ and $B'':=B'\setminus V(\mathcal M_2)$.
\end{itemize}

Note that $|A''|=| B''|\equiv 0 \mod 2$ and so (a) $|A''|= |B''|\equiv 0 \mod 6$ or; (b) $|A''|= |B''|\equiv 2 \mod 6$ or; (c) $|A''|= |B''|\equiv 4 \mod 6$.
If (a) holds we set $\mathcal M_3:= \emptyset$. Suppose that (b) holds. If Claim~\ref{KK}(i) or (iii) holds then we set  $\mathcal M_3:= \{K\}$. If Claim~\ref{KK}(ii) holds then we set $\mathcal M_3 := \{K, K', K'', K'''\}$ where $K''$ and $K'''$ are two
vertex-disjoint copies of $K^-_4$ in $H$ of type $(1,3)$ which are additionally vertex-disjoint from $\mathcal M_1, \mathcal M_2, K,K'$. (It is easy to see such $K''$ and $K'''$ exist since $H$ $\gamma _2$-contains $\mathcal B[A,B]$.) Finally, suppose that (c) holds.
If Claim~\ref{KK}(i) or (ii) holds then we set $\mathcal M_3 := \{K', K''\}$ where $K''$ is a copy of $K^-_4$ in $H$ of type $(1,3)$ which is vertex-disjoint from $\mathcal M_1, \mathcal M_2, K,K'$. If Claim~\ref{KK}(iii) holds then we set $\mathcal M_3 := \{K, K'\}$.

In every case we have chosen $\mathcal M_3$ so that $|A'''|=|B'''|\equiv 0 \mod 6$ where $A''':=A''\setminus V(\mathcal M_3)$ and $B''':=B''\setminus V(\mathcal M_3)$.
Depending on the definition of $\mathcal M_3$, $A'''$ and $B'''$ could contain vertices from $K,K'$, and thus perhaps vertices from $A_2$ and $B_2$. However, by applying Claim~\ref{greed} we can obtain a $K^-_4$-tiling $\mathcal M_4$ in $H$ so that:
\begin{itemize}
\item $|\mathcal M_4| \leq 24$;
\item $\mathcal M_4$ is vertex-disjoint from $ \mathcal M_1, \mathcal M_2, \mathcal M_3$;
\item Every copy of $K^-_4$ in $\mathcal M_4$ is of type $(4,0)$ or $(1,3)$;
\item $|A^*|=|B^*|\equiv 0 \mod 6$ where $A^*:=A'''\setminus V(\mathcal M_4)$ and $B^*:=B'''\setminus V(\mathcal M_4)$;
\item $A^* \subseteq A_1$ and $B^* \subseteq B_1$.
\end{itemize}
Note that $|A^*|,|B^*|\geq n/2-\gamma_ 2 n$. Set $H^*:=H[A^* \cup B^*]$.
By ($\alpha _2$) and ($\beta _2$) we have that every vertex in $H^*$ is $\gamma _3$-good with respect to $\mathcal B[A^*,B^*]$. Therefore, Lemma~\ref{allgood} implies that $H^*$ contains a  $K^- _4$-factor, $\mathcal M_5$.
We have that $\mathcal M_1 \cup \mathcal M_2 \cup \mathcal M_3 \cup \mathcal M_4 \cup \mathcal M_5$ is a  $K^- _4$-factor in $H$, as desired.
\end{proof}

\section{Concluding remarks}
In this paper we have determined the minimum codegree threshold that ensures a $K^- _4$-factor in a $3$-graph of sufficiently large order by the absorbing method. It is attempting to apply \cite[Theorem 2.9]{mycroft} of Keevash and Mycroft to handle the non-extremal case directly. However, since the 4-system defined in \eqref{eq:Js} is not a \emph{complex} (downward-closed system), we can not apply \cite[Theorem 2.9]{mycroft} directly.

It would also be interesting to determine the minimum \emph{vertex degree} threshold for $K^- _4$-tiling. 
\begin{conjecture}\label{conj1}
Let $n \in 4\mathbb N$ be sufficiently large. If $H$ is a $3$-graph on $n$ vertices and
$$\delta _1 (H) > \binom{3n/4-1}{2}$$
then $H$ contains a $K^- _4$-factor. 
\end{conjecture}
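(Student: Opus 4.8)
The plan is to mirror the proof of Theorem~\ref{mainthm}: locate the extremal configuration, split into an extremal and a non-extremal case, treat the non-extremal case by the absorbing method (an absorbing lemma together with an almost perfect tiling lemma), and handle the extremal case directly. The construction showing the bound $\binom{3n/4-1}{2}$ is best possible is a \emph{space barrier}: partition $V=A\cup B$ with $|A|=n/4+1$ and $|B|=3n/4-1$, and let $H$ consist of all triples with at most one vertex in $A$. Every vertex of $A$ then has degree exactly $\binom{|B|}{2}=\binom{3n/4-1}{2}$, while every copy of $K_4^-$ uses at least three vertices of $B$ (in this $H$ the only types realised are $(1,3)$ and $(0,4)$), so $|B|<3\cdot n/4$ rules out a $K_4^-$-factor. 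Accordingly, call $H$ \emph{$\gamma$-extremal} if $V(H)$ has a partition $A\cup B$ with $\big||A|-n/4\big|\le\gamma n$ such that all but at most $\gamma n^3$ edges of $H$ have at most one vertex in $A$.

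For the absorbing set in the non-extremal case, observe first that $\delta_1(H)>\binom{3n/4-1}{2}$ forces every link graph $L_v$ to have edge density exceeding $9/16>1/2$, so by Mantel's theorem each $L_v$ has $\Omega(n^3)$ triangles, i.e.\ every vertex of $H$ is the centre of $\Omega(n^4)$ copies of $K_4^-$; in particular the exceptional situation of Lemma~\ref{lem:ve} never arises. I would then reprove the skeleton of Section~\ref{secabs}: partition $V(H)$ into a bounded number of $(c,\eta)$-closed parts (a vertex-degree version of Lemma~\ref{lem:P}), and merge them via bridges. The only place where the argument of Section~\ref{secabs} genuinely uses the codegree hypothesis is Lemma~\ref{lem:C}, which must be replaced by its vertex-degree counterpart: if for some partition $X\cup Y$ with $|X|,|Y|\ge n/5$ there are fewer than $\gamma^2 n^3$ $XXY$-edges (or fewer than $\gamma^2 n^3$ $XYY$-edges), then $H$ is $3\gamma$-extremal in the sense above. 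Granting this, the counting arguments of Propositions~\ref{prop:le}, \ref{prop:axy} and Lemmas~\ref{lem:b1}, \ref{lem:b2}, \ref{clm:case1}, \ref{lem:B5} go through essentially verbatim (they use $\delta_2(H)\ge(1/2-\gamma)n$ only locally, to guarantee many $K_4^-$'s of mixed type, which non-extremality now supplies), so the closed parts merge into one and Lemma~\ref{lem:abs3} yields the absorbing set.

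The almost perfect tiling lemma is the crux, and the proof of Lemma~\ref{lem:almost} does \emph{not} carry over: it runs through Lemma~\ref{lem:keevash}, whose hypothesis $\hat{\delta}_2(J)\ge(1/2-\gamma)n$ is precisely $\delta_2(H)\ge(1/2-\gamma)n$, and a vertex-degree bound gives no codegree lower bound at all (pairs inside $A$ in the extremal example have codegree $0$). I would instead invoke hypergraph regularity: apply the weak hypergraph regularity lemma, pass to a reduced $3$-graph $R$ inheriting density $>9/16-o(1)$ together with the non-extremal structure, extract from this an almost perfect fractional $K_4^-$-tiling of $R$, and lift it to an almost perfect integer $K_4^-$-tiling of $H$ by a random-greedy embedding inside regular triples --- the route used for the other known minimum-vertex-degree tiling thresholds. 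I expect proving that a non-extremal $H$ with $\delta_1(H)$ just above $\binom{3n/4-1}{2}$ admits an almost (equivalently, fractionally) perfect $K_4^-$-tiling to be the main technical obstacle.

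Finally, in the extremal case one argues directly as in Section~\ref{secex}. Write $V=A\cup B$ with $|A|$ close to $n/4$ and all but $o(n^3)$ edges having at most one vertex in $A$, and let $A_0\subseteq A$, $B_0\subseteq B$ be the $o(n)$ vertices incident to many missing or extra edges. First cover $A_0\cup B_0$ by a small $K_4^-$-tiling using copies of type $(1,3)$ and $(0,4)$ (via Mantel in the relevant link graphs, as in the claims in the proof of Lemma~\ref{extremallemma}). Next, using a bounded number of copies of type $(2,2)$, $(3,1)$ or $(4,0)$ --- which must exist once $\delta_1(H)$ is strictly above $\binom{3n/4-1}{2}$, since some vertex of $A$ then has an edge meeting $A$ in at least two points --- together with a bounded divisibility/parity correction exactly like the $\mathcal{M}_3$ step in the proof of Lemma~\ref{extremallemma}, reduce to a sub-$3$-graph on $A^*\cup B^*$ with $|A^*|$ an admissible value and $|B^*|=3|A^*|$. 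Finally build a perfect type-$(1,3)$ tiling of $A^*\cup B^*$: this is a perfect matching problem in the auxiliary $3$-graph on $B^*$ whose edges are those triples $b_1b_2b_3$ that form a copy of $K_4^-$ with some vertex of $A^*$ (almost all triples qualify, since $H$ is near-extremal), whose minimum degree forces a perfect matching by the Daykin--H\"aggkvist theorem, exactly as in Lemma~\ref{allgood}; pairing each matching edge with a distinct vertex of $A^*$ completes the $K_4^-$-factor.
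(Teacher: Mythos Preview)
The statement you are attempting to prove is Conjecture~\ref{conj1}, which the paper presents as an \emph{open problem} in the concluding remarks; the paper gives no proof. The only thing the paper establishes is the lower bound, via the space-barrier construction you correctly reproduce (the $3$-graph on $X\cup Y$ with $|X|=n/4+1$, $|Y|=3n/4-1$, containing all triples meeting $Y$ in at least two points). So there is no ``paper's own proof'' to compare against, and what you have written is not a proof but a programme for attacking an open conjecture.

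As a programme it is reasonable in outline, but it contains a genuine gap in the absorbing step that you understate. You assert that Lemmas~\ref{lem:b1}, \ref{lem:b2}, \ref{clm:case1} and \ref{lem:B5} ``go through essentially verbatim'' because they ``use $\delta_2(H)\ge(1/2-\gamma)n$ only locally, to guarantee many $K_4^-$'s of mixed type''. This is not accurate. The proofs of Lemmas~\ref{lem:b1} and~\ref{lem:b2} repeatedly invoke lower bounds on \emph{individual} codegrees such as $\deg(xx')$, $\deg(xy_i)$, $\deg(x'y_i)$ (see e.g.\ the derivation of~\eqref{eq:gap}, \eqref{eq:e31}, \eqref{eqn:b2eqn2}), and use these to pin down the sizes of $X$ and $Y$ and the location of neighbourhoods; a vertex-degree hypothesis gives you no control over any particular codegree (in your own extremal example, pairs inside $A$ have codegree zero). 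Likewise the definition and use of ``$(\rho,X,Y)$-typical'' triples in Lemma~\ref{clm:case1} is built around codegree constraints. Replacing all of this is not a local patch but a substantial new argument, on a par with the almost-tiling step you already flag as the crux. You also need a vertex-degree analogue of Lemma~\ref{lem:P}: the bound $|\tilde N_{1,\gamma\eps}(v)|\ge(1/4-3\gamma)n$ in Proposition~\ref{prop:Nv} comes from Proposition~\ref{prop:le}, which is useful only when codegrees are large, so the partition into at most four closed parts is not immediate either.
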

Note that, if true, the minimum vertex degree condition in Conjecture~\ref{conj1}
is best-possible. Indeed, consider the $3$-graph $H$ whose vertex set has a partition $X,Y$ so that $|X|=n/4+1$, $|Y|=3n/4-1$ and so that the edge set of $H$ 
consists of precisely  all those edges whose intersection with $Y$ is at least $2$.
Since $|Y|=3n/4-1< 3(n/4)$,  $H$ does not contain a $K^- _4$-factor.
Further, $\delta _1 (H) =\binom{3n/4-1}{2}$.

It is desirable to generalise the Hajnal-Szemer\'edi theorem to hypergraphs. Below we discuss the codegree tiling threshold for a family of $k$-graphs that include $K_{k+1}^k$. 
Given $k\ge 2$ and $2\le i\le k+1$, let $F_i^k$ denote the (unique) $k$-graph with $k+1$ vertices and $i$ edges (thus $F_{k+1}^k = K_{k+1}^k$). 
The results on graph tiling tell us that $\delta(n, F_i^2)= (\frac{i-1}{3} + o(1))n$ for $2\le i\le 3$.
The results in \cite{KuOs-hc, LoMa, LM1} yield that $\delta(n, F_i^3)= (\frac{i-1}{4} + o(1))n$ for $2\le i\le 4$. 
This prompts us to ask the following question:
\begin{problem}\label{pro1}
Is it true that $\delta(n, F_i^k)= (\frac{i-1}{k+1} + o(1))n$ for $2\le i\le k$, in particular $\delta(n, K_{k+1}^k)= (\frac{k}{k+1} + o(1))n$?
\end{problem}
Since $F_2^k$ is $k$-partite, the result of Mycroft \cite{Myc} confirms that $\delta(n, F_2^k)= (\frac{1}{k+1} + o(1))n$.
The methods we employed in this paper may be useful to answer Problem~\ref{pro1} in other cases.


\section*{Acknowledgements}
This research was partially carried out whilst the  authors were visiting the Institute for Mathematics and its Applications at the University of Minnesota.
The authors would like to thank the institute for the nice working environment.

\bibliographystyle{plain}

{\footnotesize \obeylines \parindent=0pt
\begin{tabular}{lll}
Jie Han      &            Allan Lo, Andrew Treglown                     &  Yi Zhao \\
Instituto de Matem\'{a}tica e Estat\'{\i}stica        &           School of Mathematics						    &  Department of Mathematics and Statistics \\
Universidade de S\~{a}o Paulo               &            University of Birmingham   					&  Georgia State University \\
S\~{a}o Paulo       &            Birmingham                          &  Atlanta \\
SP 05508-090           &            B15 2TT				&  Georgia 30303\\
Brazil                   &              UK		 			&  USA
\end{tabular}
}

{\footnotesize \parindent=0pt

\it{E-mail addresses}:
\tt{jhan@ime.usp.br}, \tt{s.a.lo@bham.ac.uk}, \tt{a.c.treglown@bham.ac.uk}, \tt{yzhao6@gsu.edu}}

\end{document}